\newtheorem*{thm*}{Theorem}
\newtheorem{thm}{Theorem}
\Crefname{thm}{Theorem}{Theorems}
\newtheorem*{lem*}{Lemma}
\newtheorem{lem}[thm]{Lemma}
\Crefname{lem}{Lemma}{Lemmas}
\newtheorem*{claim*}{Claim}
\newtheorem{claim}{Claim}
\crefname{claim}{Claim}{Claims}
\Crefname{claim}{Claim}{Claims}
\newtheorem{prop}[thm]{Proposition}
\Crefname{prop}{Proposition}{Propositions}
\crefname{cor}{Corollary}{Corollaries}
\crefname{conj}{Conjecture}{Conjectures}
\Crefname{qn}{Question}{Questions}
\Crefname{obs}{Observation}{Observations}
\Crefname{ex}{Example}{Examples}
\theoremstyle{definition}
\Crefname{prob}{Problem}{Problems}
\Crefname{defn}{Definition}{Definitions}
\theoremstyle{remark}
\renewenvironment{proof}[1][]{\begin{trivlist}
\item[\hspace{\labelsep}{\bf\noindent Proof#1.\/}] }{\qed\end{trivlist}}
\newenvironment{proof_claim}[1][]{\begin{trivlist}
\item[\hspace{\labelsep}{\emph{\noindent Proof#1.}\/}] }{\qed\end{trivlist}}
\newcommand{\ceil}[1]{
    \left\lceil #1 \right\rceil
}
\newcommand{\floor}[1]{
    \left\lfloor #1 \right\rfloor
}
\newcommand{\eps}{\varepsilon}
\newcommand{\E}{\mathbb{E}}
\def \smvx {circle[radius = .07][fill = black]}
\tikzstyle{edge}=[very thick]
\definecolor{bostonuniversityred}{rgb}{0.8, 0.0, 0.0}
\definecolor{arsenic}{rgb}{0.23, 0.27, 0.29}
\tikzstyle{diredge}=[postaction={decorate,decoration={markings,
\tikzstyle{diredge2}=[postaction={decorate,decoration={markings,
\newcommand{\defPt}[3]{
	\def \pt {(#1, #2)}
	\coordinate [at = \pt, name = #3];
}
\tikzset{
   K5/.pic={
     \foreach \x in {1,...,5}{%
    \pgfmathparse{(\x-1)*360/5}
    \node[circle,fill=black, inner sep=1 pt] (N-\x) at (\pgfmathresult:1 cm) [thick] {};
  }
  
  \foreach \x in {1,...,4}{%
    \foreach \y in {\x,...,5}{%
        \path (N-\x) edge[line width=1 pt] (N-\y);
  }
  }
  }
}
\tikzset{
   K55/.pic={
     \defPt{0}{-0.9}{e0}
     \defPt{0}{-0.3}{e1}
     \defPt{0}{0.3}{e2}
     \defPt{0}{0.9}{e3}

    \defPt{1.8}{-1.05}{f0}
     \defPt{1.8}{-0.35}{f1}
     \defPt{1.8}{0.35}{f2}
     \defPt{1.8}{1.05}{f3}

\foreach \i in {0,...,3}
    \foreach \j in {0,...,3}
        \draw[red, line width=1pt ] (e\i) -- (f\j);

\fitellipsis{$(e0)-(0,0.1)$}{$(e3)+(0,0.1)$}{0.6};
\pic[rotate=-90, xscale=0.225,yscale=0.45, red] at (0,0) {K4s};

\fitellipsis{$(f0)-(0,0.1)$}{$(f3)+(0,0.1)$}{0.6};
\pic[rotate=-90, xscale=0.225,yscale=0.5, blue] at (1.8,0) {K5};

  }
}
\tikzset{
   K4s/.pic={
     \foreach \x in {1,...,4}{%
    \pgfmathparse{(\x-1)*360/4}
    \node[draw,circle,fill=black, black, inner sep=0.7 pt] (N-\x) at (\pgfmathresult:1 cm) [thick] {};
  }
  
  \foreach \x in {1,...,3}{%
    \foreach \y in {\x,...,4}{%
        \draw [thick] (N-\x) -- (N-\y);
  }
  }
  }
}
\tikzset{
   K4/.pic={
     \foreach \x in {1,...,4}{%
    \pgfmathparse{(\x-1)*360/4}
    \node[draw,circle,fill=black, inner sep=1 pt] (N-\x) at (\pgfmathresult:1 cm) [thick] {};
  }
  
  \foreach \x in {1,...,3}{%
    \foreach \y in {\x,...,4}{%
        \draw (N-\x) -- (N-\y);
  }
  }
  }
}
\tikzset{
   K4t/.pic={
     \foreach \x in {1,...,4}{%
    \pgfmathparse{(\x-1)*360/4}
    \node[draw,circle,fill=black, inner sep=1 pt] (N-\x) at (\pgfmathresult:1 cm) [thick] {};
  }
  
  \foreach \x in {1,...,3}{%
    \foreach \y in {\x,...,4}{%
        \path (N-\x) edge[color=red, line width=1.25 pt] (N-\y);
  }
  }
  }
}
\newcommand{\fitellipsis}[3] 
{\draw [fill=white] let \p1=(#1), \p2=(#2), \n1={atan2(\y2-\y1,\x2-\x1)}, \n2={veclen(\y2-\y1,\x2-\x1)}
    in ($ (\p1)!0.5!(\p2) $) ellipse [ x radius=\n2/2+0.1cm, y radius=#3cm, rotate=\n1];
}
\newcommand{\fitellipsisnfill}[3] 
{\draw [] let \p1=(#1), \p2=(#2), \n1={atan2(\y2-\y1,\x2-\x1)}, \n2={veclen(\y2-\y1,\x2-\x1)}
    in ($ (\p1)!0.5!(\p2) $) ellipse [ x radius=\n2/2+0.1cm, y radius=#3cm, rotate=\n1];
}
\newcommand{\fitellipsisrfill}[3] 
{\draw [fill=red,fill opacity=0.1] let \p1=(#1), \p2=(#2), \n1={atan2(\y2-\y1,\x2-\x1)}, \n2={veclen(\y2-\y1,\x2-\x1)}
    in ($ (\p1)!0.5!(\p2) $) ellipse [ x radius=\n2/2+0.1cm, y radius=#3cm, rotate=\n1];
}
\newcommand{\fitellipsisbfill}[3] 
{\draw [fill=blue,fill opacity=0.1] let \p1=(#1), \p2=(#2), \n1={atan2(\y2-\y1,\x2-\x1)}, \n2={veclen(\y2-\y1,\x2-\x1)}
    in ($ (\p1)!0.5!(\p2) $) ellipse [ x radius=\n2/2+0.1cm, y radius=#3cm, rotate=\n1];
}
\begin{document}

\begin{frontmatter}[classification=text]

\title{Tight Ramsey Bounds for \\ Multiple Copies of a Graph} 

\author[mbuc]{Matija Buci\'c\thanks{Research supported in part by NSF grant CCF-1900460.}}
\author[bsud]{Benny Sudakov\thanks{Research supported in part by SNSF grant 200021\_196965.}}

\begin{abstract}
The Ramsey number $r(G)$ of a graph $G$ is the smallest integer $n$ such that any $2$ colouring of the edges of a clique on $n$ vertices contains a monochromatic copy of $G$. Determining the Ramsey number of $G$ is a central problem of Ramsey theory with long and illustrious history. Despite this there are precious few classes of graphs $G$ for which the value of $r(G)$ is known exactly. One such family consists of large vertex disjoint unions of a fixed graph $H$, we denote such a graph, consisting of $n$ copies of $H$ by $nH$. This classical result was proved by Burr, Erd\H{o}s and Spencer in 1975, who showed $r(nH)=(2|H|-\alpha(H))n+c$, for some $c=c(H)$, provided $n$ is large enough. Since it did not follow from their arguments, Burr, Erd\H{o}s and Spencer further asked to determine the  number of copies we need to take in order to see this long term behaviour and the value of $c$. More than $30$ years ago Burr gave a way of determining $c(H)$, which only applies when the number of copies $n$ is triple exponential in $|H|$. In this paper we give an essentially tight answer to this very old problem of Burr, Erd\H{o}s and Spencer by showing that the long term behaviour occurs already when the number of copies is single exponential.
\end{abstract}
\end{frontmatter}

\section{Introduction}
   
     Ramsey theory refers to a large body of mathematical results, which all roughly speaking say that any sufficiently large structure is guaranteed to have a large well--organised substructure. For example, the celebrated theorem of Ramsey \cite{ramsey1929problem} from 1929 says that for any fixed graph $G$, every $2$-edge-colouring of a sufficiently large complete graph contains a monochromatic copy of $H$. The \emph{Ramsey number of $H$}, denoted $r(H)$, is defined to be the smallest order of a complete graph satisfying this property. 
    
    Determining $r(H)$ is one of the central problems of Ramsey theory with long and illustrious history. Perhaps the most famous instance of the problem is when $H$ is a complete graph, where despite considerable attention over the years \cite{ramsey1,ramsey2,ramsey3,ramsey4,ramsey-ub} the best known bounds, up to improvements of the lower order terms in the exponents, remain $2^{n/2}\le r(K_n) \le 2^{2n}$ first obtained by Erd\H{o}s in 1947 \cite{ramsey-lb} and Erd\H{o}s and Szekeres in 1935 \cite{ramsey-ub} respectively. This situation is indicative of the situation in general, there are precious few families of graphs for which the Ramsey numbers are known precisely, see \cite{path-ramsey,cycle-ramsey,cycle-ramsey2} for some notable exceptions and \cite{dynamic-ramsey} for a dynamic survey on what is known about Ramsey numbers for various classical families of graphs.
    
    A particular family where $r(G)$ is, at least in some sense known, is when $G$ consists of a vertex disjoint union of many copies of some small graph $H$, we will denote such a graph consisting of $n$ vertex disjoint copies of $H$ by $nH$. Ramsey numbers of $nH$ were first considered by Burr, Erd\H{o}s and Spencer in 1975 \cite{bes} when they showed the following result. Here $\alpha(H)$ stands for the largest size of an independent set in $H$. 
    
    \begin{thm}[\cite{bes}]\label{thm:bes} For any graph $H$ without isolated vertices there is a constant $c=c(H)$ and $n_0$ such that if $n \ge n_0$
    $$r(nH)= (2|H|-\alpha(H))n+c.$$
    \end{thm}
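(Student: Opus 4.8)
The plan is to establish the two one-sided bounds $r(nH) \le (2h-\alpha)n + c_1$ and $r(nH) \ge (2h-\alpha)n - c_2$, with $h := |H|$, $\alpha := \alpha(H)$ and $c_1,c_2$ depending only on $H$, and to combine them with an eventual monotonicity statement; the error term $a_n := r(nH) - (2h-\alpha)n$ is then a bounded, eventually nondecreasing integer sequence, hence eventually constant, which is exactly the theorem.

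\textbf{Lower bound.} For $N = (2h-\alpha)n - 2$ one exhibits a $2$-colouring of $K_N$ with no monochromatic $nH$. Partition $V(K_N) = X \cup Y$ with $|X| = hn-1$ and $|Y| = (h-\alpha)n-1$ (here $h-\alpha \ge 1$ because $H$, having no isolated vertex, has an edge), colour every edge inside $X$ red and all remaining edges blue. The red graph is a clique on $X$ plus isolated vertices, so each red copy of $H$ lies inside $X$, giving at most $\floor{|X|/h} = n-1$ disjoint red copies. The blue graph is the complete split graph with independent side $X$ and clique side $Y$, so any copy of $H$ in it uses at most $\alpha$ vertices of $X$ and hence at least $h-\alpha$ vertices of $Y$; thus at most $\floor{|Y|/(h-\alpha)} = n-1$ disjoint blue copies fit. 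This already gives $r(nH) \ge (2h-\alpha)n - 1$. (The exact constant $c(H)$ comes from optimising over this family of constructions; for the theorem a bound of this shape suffices.)

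\textbf{Upper bound.} One must show $r(nH) \le (2h-\alpha)n + c_1$ for large $n$, and this is the heart of the matter. The crude bound is easy: in a $2$-colouring of $K_N$, greedily delete disjoint monochromatic copies of $H$ until fewer than $r(H)$ vertices remain, obtaining $\ge (N-r(H))/h$ copies and hence, by pigeonhole, $\ge (N-r(H))/(2h)$ of one colour, which yields only $r(nH) \le 2hn + O(1)$. The genuine task is to save the term $\alpha n$, and this is the main obstacle. The approach is a defect argument: fix a maximum family of pairwise disjoint red copies of $H$, say $t$ of them on a vertex set $R$. If $t \ge n$ we are done. Otherwise $V \setminus R$ is red-$H$-free, so greedily tiling it by blue copies leaves fewer than $r(H)$ vertices uncovered and produces $\ge (N - th - r(H))/h$ disjoint blue copies, which is $\ge n$ as soon as $t \le (1-\alpha/h)n + O(1)$. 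Hence we may assume $t$ is close to (but below) $n$, and, running the same argument with colours swapped, that the maximum blue packing is close to $n$ as well. The crux is then to show that this near-equality in both colours forces the colouring to be structurally close to the extremal construction above — the red graph is essentially a disjoint union of a bounded number of cliques, so the blue graph is essentially complete multipartite — and then a direct count, using that each blue copy meets each red clique (a part of the blue complete multipartite graph) in at most $\alpha$ vertices, pins down $N \le (2h-\alpha)n + O(1)$. Making this stability step quantitative, with the right constants and a usable bound on how large $n$ must be, is where the real work lies.

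\textbf{Eventual stabilisation.} By the two bounds, $a_n = r(nH) - (2h-\alpha)n$ is bounded. For large $n$, a colouring of $K_{r(nH)-1}$ with no monochromatic $nH$ cannot have either colour class $H$-free — such a colouring admits an almost perfect monochromatic $H$-tiling in the other colour, forcing $r(nH)-1 \le hn + O(1)$, which contradicts the lower bound — so by the stability output of the upper bound argument it is, for large $n$, of the extremal clique-plus-complete-multipartite shape. Extending such a colouring by $2h-\alpha$ new vertices — adding $h$ of them to the red-clique side and $h-\alpha$ to the blue-clique side and colouring so as to continue the pattern — produces a colouring of $K_{r(nH)-1+2h-\alpha}$ with no monochromatic $(n+1)H$, whence $r((n+1)H) \ge r(nH) + 2h-\alpha$, i.e.\ $a_{n+1} \ge a_n$. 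A bounded nondecreasing integer sequence is eventually constant, so $a_n = c(H)$ for all $n \ge n_0$, as required.
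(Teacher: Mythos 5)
There is a genuine gap, and you flag it yourself: both the quantitative upper bound $r(nH)\le (2|H|-\alpha(H))n+c_1$ and the monotonicity $r((n+1)H)\ge r(nH)+2|H|-\alpha(H)$ in your scheme rest on an unproven ``stability'' claim that near-extremal colourings must look like the clique-plus-complete-multipartite construction. That claim is not a routine finishing step; it is essentially the structure theorem for critical colourings (the content of \Cref{thm:critical} here), which is the hardest part of the whole subject and is needed only for pinning down $c(H)$ and $n_0$ exactly, not for the eventual-linearity statement of \Cref{thm:bes}. Note also that your defect argument only yields $t>(1-\alpha(H)/|H|)n-O(1)$, not ``$t$ close to $n$'', so even the setup for the stability step is looser than stated; and ``structurally close to extremal'' would not suffice for the extension step in your last paragraph, where one needs exact structure outside a small exceptional set.

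The standard route (due to Burr--Erd\H{o}s--Spencer, and sketched in \Cref{sec:sym}) runs the monotonicity in the opposite, easy direction. One shows that for $n$ large, any $2$-colouring of $K_{2|H|-\alpha(H)+r((n-1)H)}$ without a monochromatic $nH$ contains an \emph{$H$-tie} --- a set of $2|H|-\alpha(H)$ vertices carrying both a red and a blue copy of $H$ (found by taking large red-$H$-free and blue-$H$-free sets left over after maximal packings and extracting a monochromatic join between them). Removing the tie and extending a monochromatic $(n-1)H$ through it gives $r(nH)\le r((n-1)H)+2|H|-\alpha(H)$, i.e.\ $f(n)=r(nH)-(2|H|-\alpha(H))n$ is eventually non-increasing. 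Your lower bound construction (which is correct and is exactly \Cref{prop:lb} with the colours swapped) gives $f(n)\ge -1$, and a bounded-below, eventually non-increasing integer sequence is eventually constant. This bypasses both the quantitative upper bound and the stability step entirely. I would recommend reorganising your proof around this subadditivity; as written, the argument is incomplete at its central step.
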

    
    While this result on one hand offers quite a precise understanding on how the answer behaves, on the other hand already Burr, Erd\H{o}s and Spencer pointed out two major deficiencies of their result. The first one is that their argument provides no way of actually computing the constant $c(H)$ and only provides them with a double exponential upper bound in $k=|H|$ on $c(H).$ The second one is that apart from knowing it is finite they obtain no bound on $n_0$, whatsoever. They naturally asked for the situation here to be remedied.
    
    This led to a slew of results which determined or estimated both $c$ and $n_0$ for a number of natural but specific families of graphs, see Section 5.13 of \cite{dynamic-ramsey} for a survey of what is known. Perhaps the most well-known of these special cases is the problem of determining $r(nK_3)$, in which case the answer is $5n$ provided $n \ge 3$ so $c(K_3)=0$ and one may already take $n_0=3$.
    
    We will be interested in the original, general problem of Burr, Erd\H{o}s and Spencer. Here, the state of the art was due to Burr \cite{burr1} who, more than 30 years ago, gave an indirect, but finite, way of computing $c(H)$ settling in a sense the first of the above mentioned deficiencies of \Cref{thm:bes}. He also obtained a bound on when the transition to the long term behaviour occurs, namely he showed one may take $n_0$ to be only triple exponential in $k=|H|$. Taking into account the fact that in most known special cases this transition occurs very quickly he mentioned it would be very desirable to show something of the sort happens in general as well. Our main result establishes precisely such a result, giving an essentially tight answer to the second problem of Burr, Erd\H{o}s and Spencer from their 1975 paper.
    
    \begin{thm}\label{thm:main} There exists $C>0$ such that for any $k$-vertex graph $H$ without isolated vertices there is a constant $c=c(H)$ such that if $n \ge 2^{Ck}$ then

    $$r(nH)= (2|H|-\alpha(H))n+c.$$
    \end{thm}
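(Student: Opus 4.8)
The theorem is the conjunction of two matching estimates, $r(nH)\ge (2k-\alpha)n+c$ and $r(nH)\le (2k-\alpha)n+c$, and the content beyond \Cref{thm:bes} is to make both valid already for $n\ge 2^{Ck}$ with the \emph{same} constant $c=c(H)$. For the lower bound the plan is to exhibit, for every such $n$, a $2$-colouring of $K_{(2k-\alpha)n+c-1}$ with no monochromatic $nH$. The skeleton is the classical construction: split the vertices as $A\sqcup B$ with $|A|=(k-\alpha)n-1$ spanning a red clique and $|B|=kn-1$ spanning a blue clique, all $A$--$B$ edges red. Then every red copy of $H$ meets $A$ in at least $k-\alpha$ vertices --- its intersection with the red-independent set $B$ is an independent set of $H$, hence has size at most $\alpha$ --- so there are fewer than $n$ disjoint red copies, while every blue copy of $H$ lies inside $B$, so there are fewer than $n$ disjoint blue copies. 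To gain the extra $c+1$ vertices one replaces a bounded portion of this construction (the interface between $A$ and $B$, together with a few auxiliary vertices) by an optimal gadget: a fixed $2$-coloured clique $\chi_0$ realising the maximal ``deficiency'' that defines $c(H)$. The only point to check is that this gadget has size bounded in terms of $k$ alone --- in fact of order at most $r(H)+O(k)=2^{O(k)}$ --- so the construction makes sense as soon as $n$ exceeds this size, in particular for $n\ge 2^{Ck}$.

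For the upper bound, suppose $\chi$ is a $2$-colouring of $K_N$ with $N=(2k-\alpha)n+c$ and, for contradiction, that $\chi$ has no monochromatic $nH$; in particular it has fewer than $n$ disjoint red copies and fewer than $n$ disjoint blue copies. Fix a maximum family $\mathcal R$ of disjoint red copies, $r:=|\mathcal R|\le n-1$, and put $W:=V(K_N)\setminus V(\mathcal R)$, a clique on $N-rk$ vertices whose red graph is $H$-free (else $\mathcal R$ extends). Greedily packing blue copies into $W$ and observing that the leftover set contains no monochromatic $H$ (hence has fewer than $r(H)$ vertices) yields about $(|W|-r(H))/k$ disjoint blue copies; since $|W|\ge N-(n-1)k=(k-\alpha)n+c+k$, this falls short of $n$ by roughly $\alpha n/k$ copies, so the naive argument only gives $N\le 2kn+2^{O(k)}$. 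The decisive gain comes from \textbf{recycling}: each copy in $\mathcal R$ contains an independent set of size $\alpha$ (a copy of a maximum independent set $I$ of $H$), and these vertices should be returned from $V(\mathcal R)$ to the red-$H$-free side. Concretely the plan is to prove a sharper \emph{covering lemma}: if $\chi$ has no blue $nH$, then either there are $n$ disjoint red copies of $H$ (contradiction), or there is a set $S$ with $|S|\le(k-\alpha)n+O_k(1)$ such that the red graph on $W':=V(K_N)\setminus S$ is $H$-free. Given such an $S$, the clique $W'$ is red-$H$-free on at least $N-(k-\alpha)n-O_k(1)$ vertices and still has no blue $nH$, so packing blue copies of $H$ into $W'$ --- whose leftover, being monochromatic-$H$-free, is controlled by the same gadget-optimality that pins down $c(H)$ --- forces $N-(k-\alpha)n-O_k(1)\le k(n-1)+c+O_k(1)$, that is $N\le(2k-\alpha)n+c$, the desired contradiction.

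The heart of the matter, and the reason the threshold is single- rather than triple-exponential, is the proof of the covering lemma: converting ``fewer than $n$ disjoint red copies'' into a red-$H$-hitting set of size only $(k-\alpha)n+O_k(1)$ instead of the trivial $kn$. The obstacle is that releasing the independent sets from the copies in $\mathcal R$ can create new red copies of $H$ among the released vertices and $W$; one must argue that whenever this happens one can rotate and absorb to manufacture an additional disjoint red copy, iterating until $n$ of them appear. To keep the number of copies needed polynomial or single-exponential in $k$ one cannot afford the regularity method (which underlies Burr's triple-exponential bound); instead the plan is to run the absorption directly, using only elementary extremal input --- Hajnal--Szemer\'edi-type results (and their extensions to $H$-factors) guaranteeing many disjoint copies of $H$ in sufficiently dense pieces of a colour class --- together with a stability analysis of the interface between $V(\mathcal R)$ and $W$: if this interface is far from the extremal ``essentially one colour'' pattern one directly produces $n$ disjoint copies in a single colour, while if it is close the colouring is essentially the construction above and the bound $(2k-\alpha)n+c$ can simply be read off. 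Matching the additive constant exactly on the two sides --- so that the \emph{same} $c(H)$ governs the lower and the upper bound --- requires carrying out this endgame count against the optimal gadget rather than against $r(H)$, and is where the bulk of the bookkeeping lies.
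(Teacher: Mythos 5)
Your overall shape (classical two-clique lower bound construction, plus a stability/absorption argument showing every non-extremal colouring contains a monochromatic $nH$) points in the right direction, but the proposal has a genuine gap at its logical core, and it also diverges from the paper in a way that makes your route strictly harder.

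The gap: you never define $c(H)$ or the ``optimal gadget'' $\chi_0$, yet both halves of your argument are calibrated against it. For the lower bound you need to know that the extremal colouring is the two-clique construction with a bounded-size exceptional piece attached in a prescribed way; for the upper bound you assert that in the stable case ``the colouring is essentially the construction above and the bound can simply be read off,'' and that the endgame count can be carried out ``against the optimal gadget.'' But proving that an extremal (critical) colouring decomposes as red clique $R$, blue clique $B$, monochromatic cross-edges, plus a small set $E$ joined in fixed colours to $R$ and $B$ and containing no $H$-tie --- \emph{that} is the entire content of the paper's \Cref{thm:critical}, and it occupies most of the paper (the join-extraction of \Cref{lem:join}, the $H$-tie relocation claims, and the tiling lemma \Cref{lem:tiling} proved by absorption with Montgomery's resilient bipartite graphs). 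Your covering lemma is likewise only a plan: the step ``whenever releasing the independent sets creates new red copies one can rotate and absorb to manufacture an additional disjoint red copy, iterating until $n$ of them appear'' is exactly where the work is, and as written your packing argument only yields $N\le(2k-\alpha)n+O_k(1)$, not the exact constant.

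The divergence: the paper never matches two bounds against an explicitly identified $c$. Instead, from the critical-colouring structure it proves the exact recursion $r(nH)=r((n-1)H)+2k-\alpha$ for all $n$ past the threshold, by deleting a red $(k-\alpha,k)$-join from the critical colouring for $nH$ (giving $r((n-1)H)\ge r(nH)-(2k-\alpha)$) and adding one to the critical colouring for $(n-1)H$ (giving the reverse inequality). This yields the existence of $c(H)$ without ever computing it, and sidesteps precisely the circularity your plan runs into. I would recommend restructuring around a structure theorem for critical colourings and a recursion, rather than trying to pin down $c(H)$ in advance.
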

    
    Our approach allows for a similar, slightly cumbersome, way of determining $c(H)$ as that of Burr. Due to this we postpone further discussion of this to \Cref{sec:sym} and will only illustrate here with a particularly nice example.
    
    \begin{thm}\label{thm:clique}
    There exists $C>0$ such that for any $n \ge 2^{Ck}$ we have 
    $$r(nK_k)=(2k-1)n+r(K_{k-1})-2.$$
    \end{thm}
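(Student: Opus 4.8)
The plan is to derive \Cref{thm:clique} from \Cref{thm:main} by pinning down the constant exactly. Applying \Cref{thm:main} with $H=K_k$, for which $|H|=k$ and $\alpha(H)=1$, gives $r(nK_k)=(2k-1)n+c$ for a constant $c=c(K_k)$ once $n\ge 2^{Ck}$; so it remains to prove $c(K_k)=r(K_{k-1})-2$, i.e. the two bounds $r(nK_k)\ge (2k-1)n+r(K_{k-1})-2$ and $r(nK_k)\le (2k-1)n+r(K_{k-1})-2$.

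For the lower bound I would exhibit a $2$-colouring of $K_N$, $N=(2k-1)n+r(K_{k-1})-3$, with no monochromatic $nK_k$. Split $V=S\cup B\cup W$ with $|S|=kn-1$, $|B|=(k-1)n-1$, $|W|=r(K_{k-1})-1$; colour $W$ internally by a fixed $2$-colouring with no monochromatic $K_{k-1}$; colour all edges inside $S$ red and all edges inside $B$ blue; colour every $S$--$B$ and $S$--$W$ edge blue, and every $B$--$W$ edge red. Since the $S$--$W$ and $S$--$B$ edges are blue, a red $K_k$ meeting $S$ lies entirely inside $S$; and a red $K_k$ inside $B\cup W$ uses at most one vertex of $B$, hence contains a red $K_{k-1}$ inside $W$ — impossible. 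So every red $K_k$ lies inside $S$, and at most $\lfloor (kn-1)/k\rfloor=n-1$ are pairwise disjoint. Symmetrically, as the $B$--$W$ edges are red a blue $K_k$ cannot meet both $B$ and $W$, it cannot use $k-1$ vertices of $W$, and it uses at most one vertex of the red clique $S$; hence it uses at least $k-1$ vertices of $B$, so at most $\lfloor ((k-1)n-1)/(k-1)\rfloor=n-1$ are pairwise disjoint. Thus $K_N$ has no monochromatic $nK_k$ and $r(nK_k)\ge N+1$.

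The matching upper bound is the substantial part: it is the specialisation to $H=K_k$ of the (cumbersome) determination of $c(H)$ alluded to in \Cref{sec:sym}, which rests on the structural analysis used to prove \Cref{thm:main}. Concretely, assuming a $2$-colouring of $K_N$ with $N=(2k-1)n+r(K_{k-1})-2$ having at most $n-1$ pairwise disjoint red $K_k$'s and at most $n-1$ pairwise disjoint blue $K_k$'s, one fixes maximum such families $\mathcal R,\mathcal B$ covering sets $X,Y$, so that $V\setminus X$ is red-$K_k$-free and $V\setminus Y$ is blue-$K_k$-free. The naive bound $|V\setminus(X\cup Y)|\le r(K_k)-1$ is far too lossy; instead one chooses $\mathcal R,\mathcal B$ to trade copies against leftover vertices optimally and argues the colouring is then forced, up to the obvious freedoms, into the shape of the construction above — a red-clique region carrying the red copies, a blue region whose copies each borrow one vertex from the red region (this is what upgrades the main term from $(2k-1)n-n$ to $(2k-1)n$), and an ``unstructured'' remainder on which neither colour contains a $K_{k-1}$, hence of size at most $r(K_{k-1})-1$. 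This gives $N\le (2k-1)n+r(K_{k-1})-3$, a contradiction.

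I expect the main obstacle to be exactly this stability/near-uniqueness statement for the extremal colourings of $nK_k$ — ruling out that any other configuration beats the one above — since only then is the surplus over $(2k-1)n$ governed by the single quantity $r(K_{k-1})$. The reduction to computing $c(K_k)$ and the verification of the construction are routine given \Cref{thm:main}.
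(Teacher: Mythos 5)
Your lower bound is complete and correct, and it is, up to swapping the two colours, exactly the construction the paper uses (in the proof of \Cref{thm:estimate}, specialised to $H=K_k$): a red clique on $kn-1$ vertices, a blue clique on $(k-1)n-1$ vertices, and an exceptional set of $r(K_{k-1})-1$ vertices carrying no monochromatic $K_{k-1}$, with the same pattern of cross edges. Your count of $n-1$ disjoint copies in each colour checks out.

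The gap is the upper bound, and it is not a small one: the "stability/near-uniqueness statement for the extremal colourings" that you defer is precisely \Cref{thm:critical}, the structural theorem whose proof occupies most of the paper (it requires the tiling lemma, \Cref{lem:tiling}, proved by an absorption argument, the join lemma, and a long sequence of cleaning steps producing the partition $R,B,E$ in which no $H$-tie meets $E$). Your outline --- fix maximum disjoint families of red and blue $K_k$'s and argue the colouring is forced into the extremal shape --- is the right idea in spirit, but none of the actual difficulties are addressed: why the leftover set contains no monochromatic $K_{k-1}$ (in the paper, a monochromatic $K_{k-1}$ in $E$ together with $k$ vertices of the opposite clique would give a $K_k$-tie meeting $E$, contradicting property (3) of \Cref{thm:critical}), and why the two clique-like regions cannot trade vertices so as to beat the sizes $kn-1$ and $(k-1)n-1$. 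Also, your remark that the reduction is "routine given \Cref{thm:main}" is misleading: \Cref{thm:main} only asserts the \emph{existence} of $c(H)$ and gives no handle on its value, so citing it shortcuts nothing; the paper instead deduces \Cref{thm:clique} from \Cref{thm:estimate}, noting that for $H=K_k$ the families $\mathcal{D}(H),\mathcal{D}_c(H),\mathcal{D}'_c(H)$ all equal $\{K_{k-1}\}$ so its two bounds coincide. As written, your proposal establishes only $r(nK_k)\ge (2k-1)n+r(K_{k-1})-2$.
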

    
    This example also serves to illustrate that our \Cref{thm:main} is essentially tight, namely if $n$ is subexponential the standard bounds, mentioned above, on Ramsey numbers $r(K_k)$ tells us that there are infinitely many values of $k$ for which the RHS of the above expression is smaller than $r(K_k)$ so we might not be able to find even a singe copy of $K_k$ let alone $n$ of them. It is quite likely 
    that this should hold for all $k$ but estimating the difference between $r(K_{k-1})$ and $r(K_k)$ and showing one is at least a constant factor larger than the other for all $k$ is an old open problem in Ramsey theory \cite{difference-ramsey}.
    
    Our second result concerns an asymmetric Ramsey problem of an arbitrary graph $G$ against $nH$. In more generality $r(G,H)$ is defined to be the smallest order of a complete graph with the property that in any $2$-colouring of its edges we can find a copy of $G$ in the first colour or a copy of $H$ in the second colour. The fairly natural question of determining $r(G,nH)$ was raised by Burr who observed that it played a key role in the proof of \Cref{thm:bes} due to Burr, Erd\H{o}s and Spencer. It also plays a key intermediate role in his further work on determining $r(nH)$. Burr managed to determine $r(G,nH)$ (as a function of $G,H$) if $n$ is at least double exponential in $k=\max(|G|,|H|)$ and naturally asked if this can be improved. We once again obtain an essentially tight result answering this question. Here there is a nice explicit formula which requires a further extension of the definition of Ramsey numbers. Given two families of graphs $\mathcal{G},\mathcal{H}$ we define $r(\mathcal{G},\mathcal{H})$ to be the smallest order of the complete graph such that in any $2$-colouring of its edges we can find a copy of a graph from $\mathcal{G}$ in the first colour or a copy of a graph from $\mathcal{H}$ in the other colour.

    \begin{thm}\label{thm:asym}
		There exists $C>0$ such that for any non-empty graph $G$ and a connected graph $H$ with $k=\max(|G|,|H|)$ we have for any $n \ge 2^{Ck}$
		$$r(G,nH)= n|H|+r(\mathcal{D}(G),H)-1,$$
		where, $\mathcal{D}(G)$ stands for the family of graphs obtained by removing a maximal independent set from $G$.
    \end{thm}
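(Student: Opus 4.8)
The plan is to prove the two bounds separately, reducing the upper bound to a single large value of $n$ by an easy monotonicity step.

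\textbf{Lower bound.} I will prove $r(G,nH)\ge n|H|+r(\mathcal D(G),H)-1$ for every $n\ge1$ (the hypothesis $n\ge2^{Ck}$ is not needed here). Set $t=r(\mathcal D(G),H)-1$ and fix, using the definition of $r(\mathcal D(G),H)$, a $2$-colouring $\chi$ of $K_t$ with no red copy of any member of $\mathcal D(G)$ and no blue $H$. On a vertex set $A\cup B$ with $|A|=t$ and $|B|=n|H|-1$, colour $A$ by $\chi$, all edges inside $B$ blue, and all $A$--$B$ edges red. As $H$ is connected and no $A$--$B$ edge is blue, every blue copy of $H$ lies inside $A$ (impossible by choice of $\chi$) or inside $B$, and $|B|<n|H|$, so there is no blue $nH$. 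If there were a red copy of $G$, then the set $J$ of vertices of $G$ that this copy maps into $B$ would be independent in $G$ (since $B$ carries no red edge), so the vertices of $G$ sent into $A$ would span, in $\chi$, a red copy of $G-J$, and hence a red copy of $G-J'\in\mathcal D(G)$ for any maximal independent $J'\supseteq J$ --- contradicting the choice of $\chi$. So $K_{n|H|+r(\mathcal D(G),H)-2}$ has a colouring with neither a red $G$ nor a blue $nH$.

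\textbf{Upper bound: reduction and set-up.} It suffices to prove $r(G,n_0H)\le n_0|H|+r(\mathcal D(G),H)-1$ for $n_0:=\ceil{2^{Ck}}$: whenever $n|H|\ge r(G,H)$ one has $r(G,nH)\le r(G,(n-1)H)+|H|$ (a colouring of $K_{r(G,(n-1)H)+|H|}$ with no red $G$ has order $\ge r(G,H)$, hence contains a blue $H$, and deleting it leaves $K_{r(G,(n-1)H)}$ with no red $G$, hence a blue $(n-1)H$), so iterating down to $n_0$ and invoking the lower bound finishes the proof. For the base case, fix a colouring of $K_N$, $N=n_0|H|+r(\mathcal D(G),H)-1$, with no red $G$, and let $\mathcal M$ be a maximum family of disjoint blue copies of $H$, of size $m$, covering a vertex set $U$, with $R=V\setminus U$. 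If $m\ge n_0$ we are done, so assume otherwise; then $|R|=(n_0-m)|H|+r(\mathcal D(G),H)-1\ge |H|+r(\mathcal D(G),H)-1\ge r(\mathcal D(G),H)$, and since $R$ carries no blue $H$ it contains a red copy of some $D=G-I\in\mathcal D(G)$ ($I$ a maximal independent set of $G$), say with embedding $\phi_0\colon V(G)\setminus I\to W\subseteq R$. Moreover $m\ge n_0-r(G,H)\ge 2^{Ck}-4^k$, since otherwise $|R|\ge r(G,H)$ and the blue-$H$-free set $R$ would contain a red $G$; thus $\mathcal M$ consists of exponentially many (in $k$) blue copies of $H$.

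\textbf{Absorption and the main obstacle.} I now want to extend the red copy of $G-I$ on $W$ to a red copy of all of $G$, which contradicts the hypothesis and forces $m\ge n_0$. As $I$ is independent, this only requires distinct vertices $z_v\notin W$ ($v\in I$) with each $z_v$ red-adjacent to all of $\phi_0(N_G(v))\subseteq W$. I would pull each $z_v$ out of some copy $C\in\mathcal M$, replacing it inside $C$ by a vertex $w_v\in R\setminus W$ chosen so that the altered copy is still a blue $H$ --- for instance if $w_v$ is blue to the $\le|H|-1$ $\phi_0$-images of the neighbours of the role vacated by $z_v$ --- and since $|I|\le k$ while $\mathcal M$ is huge, keeping the copies, the $w_v$, and the $z_v$ pairwise disjoint and clear of $W$ is mere bookkeeping. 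The essential point is the existence of these swaps, which via Hall's theorem on the bipartite compatibility graph between $I$ and $V\setminus W$ reduces to excluding a configuration in which some set of at most $k-1$ vertices of $W$ is blue to all but $\le k$ vertices of $V\setminus W$ --- equivalently, in which a single vertex $w^\ast\in R$ is blue to at least a $1/(4k)$-fraction, hence an exponentially large number, of the vertices covered by $\mathcal M$. Ruling this out is the crux, and the only genuine use of $n\ge2^{Ck}$: $w^\ast$ then has blue neighbours in a single-exponentially large number of the copies of $\mathcal M$, and I would combine the connectedness of $H$ with a pigeonhole/Ramsey argument over the ``profiles'' of these copies (internal isomorphism type plus colours to the $\le k-1$ vertices of $W$, of which there are at most $2^k\cdot k!$ kinds) to locate within them a bounded sub-structure that can be re-tiled by blue copies of $H$ so as either to absorb $w^\ast$ into the blue $H$-matching (contradicting maximality of $\mathcal M$) or to expose, together with $w^\ast$ and $W$, a red copy of $G$. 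Pinning down this dichotomy --- choosing the right threshold for ``many copies'', and handling the degenerate case where $|R\setminus W|$ is too small to supply swap partners, so that $I$ must instead be embedded directly into $U$ at the cost of destroying some copies of $\mathcal M$ --- is where essentially all the difficulty of the theorem lies; everything preceding it is routine.
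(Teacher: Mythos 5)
Your lower bound construction is correct and is exactly the paper's. The reduction of the upper bound to a single value $n_0$ via $r(G,nH)\le r(G,(n-1)H)+|H|$ is also valid (though the paper does not need it). The problem is the upper bound's central step, which you yourself flag as unresolved: having found a red copy of some $G-I\in\mathcal{D}(G)$ on a set $W$ inside the leftover $R$ of a maximum blue $H$-matching, you must extend it to a red $G$, and your Hall analysis correctly reduces the failure case to the existence of a vertex $w^*\in W$ whose blue degree is at least roughly $N/k$. But the dichotomy you propose for handling $w^*$ --- a pigeonhole over ``profiles'' of the matching copies it meets, followed by a \emph{bounded} re-tiling that either absorbs $w^*$ into the matching or exposes a red $G$ --- cannot work, because it is a purely local argument and the offending configuration genuinely occurs in colourings with no red $G$ and only $n-1$ blue copies. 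Concretely, in the extremal colouring ($B$ of size $n|H|-1$ all blue, $A$ a $(\mathcal{D}(G),H)$-free colouring, all $A$--$B$ edges red), the maximum blue matching leaves $|H|-1$ vertices of $B$ uncovered; $R$ then contains a red member of $\mathcal{D}(G)$ using one such vertex $b$, and $b$ has blue degree $n|H|-2$, yet $\mathcal{M}$ is maximum and there is no red $G$. Your argument makes no use of the one extra vertex that distinguishes $K_N$ from this example, so no local re-tiling around $w^*$ can close the gap; the contradiction has to come from a global count.

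The paper avoids this trap with a different architecture (\Cref{thm:asym-ub}). \emph{Before} taking the blue matching it reserves, via \Cref{lem:ramsey} applied to the blue graph (whose independence number is small because there is no red $G$), a large set $A$ in which every vertex has blue degree at least $\tfrac89|A|$. The matching is taken only outside $A$; leftover vertices with many blue neighbours in $A$ are absorbed into the matching; and then every vertex of the final exceptional set $C_1$ has \emph{few} blue neighbours in $A_1$, so any red $\mathcal{D}(G)$-copy in $C_1$ can always be completed to a red $G$ using $\alpha(G)$ vertices of $A_1$ that are red-complete to it --- the Hall failure you are stuck on simply cannot occur there. This forces $|C_1|\le r(\mathcal{D}(G),H)-1$, and the count is then finished by tiling $A_1$ with blue copies of $H$ using \Cref{lem:tiling}, a Hajnal--Szemer\'edi-type statement for graphs of bounded independence number proved by the absorption method; that tiling lemma is the real technical content of the theorem and has no counterpart in your proposal.
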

    
    The requirement for $H$ to be connected is easily seen to be necessary for this tight bound to hold and our methods do allow for a similar result even if $H$ is disconnected. We discuss this point further in \Cref{sec:asym}.

\section{Preliminaries.}    
In this section we give a few easy lemmas which will come in handy. The first one is an adaptation of one of the standard proofs of Ramsey's theorem and allows us to say that graphs with small independent set have still fairly large subgraphs with extremely high minimum degree.

\begin{lem}\label{lem:ramsey}
Given an $n$-vertex graph $G$ with $\alpha(G) < k$ and $d \ge 2$, provided $n \ge 3 \cdot d^{k-1}$ we can find a subgraph of order $m \ge n/d^{k-1}$ with degree at least $(1-1/d)m$.
\end{lem}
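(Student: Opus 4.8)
The plan is to adapt the standard ``neighbourhood versus non-neighbourhood'' dichotomy from one of the classical proofs of Ramsey's theorem, but to always recurse into the \emph{non}-neighbourhood of a vertex of low degree, so that the recursion depth is controlled by the bound $\alpha(G)<k$. Concretely, I would run the following process. Set $G_0=G$ and $N_0=n$. Given $G_i$ on $N_i$ vertices: if every vertex of $G_i$ has degree at least $(1-1/d)N_i$, halt and output $G_i$; otherwise fix a vertex $v_i$ with $\deg_{G_i}(v_i)<(1-1/d)N_i$, let $A_i$ be its set of non-neighbours in $G_i$ (\emph{not} including $v_i$ itself), put $G_{i+1}=G_i[A_i]$, and continue. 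Since $v_i$ has few neighbours and degrees are integers, $N_{i+1}=|A_i|=N_i-1-\deg_{G_i}(v_i)\ge N_i-\ceil{(1-1/d)N_i}=\floor{N_i/d}$.

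Two facts make this work. First, the chosen vertices $v_0,v_1,\dots$ form an independent set: for $i<j$ the graph $G_j$ is an induced subgraph of $G_i[A_i]$, so $v_j\in A_i$, i.e.\ $v_j$ is non-adjacent to $v_i$. Hence, as $\alpha(G)<k$, at most $k-1$ branching steps can occur before the process halts; and since each step strictly decreases the vertex count, it does halt, after some $t\le k-1$ steps, at a graph $G_t$ on $m:=N_t$ vertices in which every vertex has degree at least $(1-1/d)m$ --- exactly the subgraph we want. Second, iterating the size estimate and using $\floor{\floor{x/d}/d}=\floor{x/d^2}$ gives $m=N_t\ge\floor{N_0/d^{t}}\ge\floor{n/d^{k-1}}$, and the hypothesis $n\ge 3d^{k-1}$ makes the right-hand side at least $3$; in particular $G_t$ is non-empty (so the degree condition is not vacuous) and $m\ge n/d^{k-1}$, as claimed.

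I do not expect a genuine obstacle. The only points that need a little attention are (i) recursing on $A_i$ \emph{without} $v_i$ --- otherwise $v_i$ survives as an isolated vertex and the process could loop forever without ever gaining large minimum degree --- while still keeping that the $v_i$ form an independent set, and (ii) bookkeeping the factor $\approx 1/d$ lost at each of the at most $k-1$ rounds, the small integer-rounding losses in which are exactly what the constant $3$ in the hypothesis is there to absorb, and which also guarantees the output graph is non-degenerate.
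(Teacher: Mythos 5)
Your process is the same one the paper uses: repeatedly pass to the non-neighbourhood of a vertex of low degree, observe that the selected vertices form an independent set so that $\alpha(G)<k$ bounds the number of iterations, and track the factor of roughly $1/d$ lost per iteration. The termination and minimum-degree conclusions are fine, as is the recurrence $N_{i+1}\ge\lfloor N_i/d\rfloor$.

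There is, however, one step where your claimed bound does not follow from what you proved. From ``at most $k-1$ branching steps'' you get $m\ge\lfloor n/d^{k-1}\rfloor$, and you then assert ``in particular $m\ge n/d^{k-1}$''; but $\lfloor x\rfloor\ge x$ fails whenever $x$ is not an integer, so for general $n$ and $d$ this last inequality is a non sequitur. The missing observation --- which is exactly how the paper closes the argument --- is that the process in fact stops after at most $k-2$ steps, not $k-1$: if $t=k-1$ steps occurred, then the surviving graph $G_t$ is non-empty (your own computation gives $|G_t|\ge 3$), and any vertex $u\in G_t$ lies in every $A_i$ and is therefore non-adjacent to all of $v_0,\dots,v_{k-2}$; together they form an independent set of size $k$, contradicting $\alpha(G)<k$. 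With $t\le k-2$ you get $m\ge\lfloor n/d^{k-2}\rfloor\ge n/d^{k-2}-1$, and since $n/d^{k-2}-n/d^{k-1}=n(d-1)/d^{k-1}\ge n/d^{k-1}\ge 3$, this comfortably exceeds $n/d^{k-1}$. So the extra factor of $d$ gained by this observation is precisely what absorbs all the rounding losses and yields the bound as stated; with that one sentence added, your proof is complete and coincides with the paper's.
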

\begin{proof}
If we can find a vertex in $G$ which has non-degree at least $n/d-1$ we select this vertex, delete all its neighbours and repeat within the remainder, where in each iteration $n$ is replaced with the number of vertices of the current graph. This process needs to stop after less than $k-1$ iterations as otherwise the $k-1$ vertices we selected together with any of the remaining $n/d^{k-1}-\frac{1}{d^{k-1}}-\ldots-1 \ge n/d^{k-1}-2\ge 1$ vertices span an independent set of size $k$. When the process stops the remaining graph has $m \ge n/d^{k-2}-\frac{1}{d^{k-2}}-\ldots-1\ge n/d^{k-2}-2\ge n/d^{k-1}$ vertices and every vertex has more than $(1-1/d)m$ neighbours, as desired.
\end{proof}

We will in most cases use the above lemma on $2$-coloured graphs in which we know that there is no monochromatic copy of some graph on $k$ vertices in some colour and then apply the lemma to the subgraph consisting of the edges of the other colour.

The following lemma will be used several times in our proofs. Before stating it let us define a \emph{$(k,\ell)$-join} to be a $2$ coloured graph whose vertices are split into sets $R$ and $B$ of sizes $k$ and $\ell$ resp. such that all edges inside $R$ are red, all edges inside $B$ are blue and edges in between $R$ and $B$ are of the same colour. We refer to $R$ as the red part and $B$ as the blue part of the join. We say the join is red if the edges between $R$ and $B$ are red and we say it is blue otherwise.

\begin{lem}\label{lem:join}
Given a $2$-coloured complete graph and a partition of its vertices into sets $R$ and $B$ both of size at least $2^{5k}$ such that $R$ has no blue $K_k$ and $B$ has no red $K_k$ we can find a $(k,k)$-join with red part in $R$ and blue part in $B$. 
\end{lem}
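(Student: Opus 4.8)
We want: given a 2-colored complete graph with vertex partition into $R$ and $B$, each of size at least $2^{5k}$, where $R$ has no blue $K_k$ and $B$ has no red $K_k$, find a $(k,k)$-join with red part in $R$ and blue part in $B$.

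Key steps:

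1. First apply Lemma~\ref{lem:ramsey} inside $R$. Since $\alpha$ of the blue graph on $R$ is less than $k$ (a blue independent set in $R$ of size $k$ would be a red $K_k$ — wait, careful: $R$ has no *blue* $K_k$, so the red graph on $R$ has no independent set of size $k$, i.e. $\alpha(\text{red graph on }R) < k$). So applying Lemma~\ref{lem:ramsey} to the red graph on $R$ with $d$ chosen appropriately (say $d=4$, noting $|R| \ge 2^{5k} \ge 3\cdot 4^{k-1}$) gives a subset $R' \subseteq R$ of size $m_R \ge |R|/4^{k-1} \ge 2^{3k}$ that is very dense in red: every vertex of $R'$ has red-degree at least $(1-1/4)m_R$ inside $R'$. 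Symmetrically, inside $B$ the blue graph has $\alpha < k$, so we get $B' \subseteq B$ of size $m_B \ge 2^{3k}$ that is very dense in blue.

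2. Now I need to extract an actual red $K_k$ inside $R'$ and a blue $K_k$ inside $B'$, but with the extra property that the bipartite colouring between the chosen cliques is monochromatic. The natural approach: build the two cliques greedily together, maintaining a large "candidate reservoir" on each side. Start with $R' , B'$; the density guarantees (min red-degree $\ge \frac34 m_R$ in $R'$, min blue-degree $\ge \frac34 m_B$ in $B'$) mean that any $k$ vertices picked greedily from $R'$ have a common red-neighbourhood of size at least $m_R - k\cdot \frac14 m_R$... but that's not quite enough if $k \ge 4$. Better: iterate more carefully — since we lose a $\frac14$ fraction at each of $k$ steps, I should instead use a larger $d$ in Lemma~\ref{lem:ramsey}, e.g. $d = 4k$ or even use that $m_R \ge 2^{3k}$ and pick the clique one vertex at a time, each time passing to the common red-neighbourhood; with min-degree $(1-1/d)$ and $d$ large enough (say $d \ge 2k$) the reservoir shrinks by at most a factor $1/2$ per step, so after $k$ steps we still have $\ge m_R/2^k \ge 2^{2k}$ vertices left — actually we need to also handle the bipartite side, so let me keep reservoirs on both sides.

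3. The real mechanism for making the connecting edges monochromatic: once we have a red clique $X \subseteq R'$ of size $k$ and a still-large reservoir $S \subseteq B'$ such that $B'[S]$ is all blue and forms part of a potential blue clique — we want to find $k$ vertices in $S$ that each send all red (or all blue) edges to $X$, AND such that those $k$ vertices are pairwise blue. Here is where we need a second Ramsey-type argument: colour the vertices of $S$ according to the pattern of edges they send to the $k$-set $X$ — there are $2^k$ possible patterns. If $|S| \ge 2^k \cdot (\text{something})$, some pattern class $S_p$ has $\ge |S|/2^k$ vertices. Within $S_p$ all vertices send the *same* pattern to $X$; but we need all of them to send a *monochromatic* pattern (all red or all blue) to $X$. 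To force this, instead build $X$ itself as a *monochromatic-to-$B'$-reservoir* clique from the start: interleave the two constructions. Concretely: at each step, we maintain a red-clique-so-far $X$, a blue-clique-so-far $Y$, a red reservoir $S_R \subseteq R'$ (common red-nbhd of $X$, and each vertex of $S_R$ sends a fixed colour to all of $Y$), and a blue reservoir $S_B \subseteq B'$ symmetrically. At each step pick one vertex from $S_R$ into $X$ and one from $S_B$ into $Y$; update reservoirs: within $S_R$ restrict to the common red-neighbourhood of the new $X$-vertex (loses factor $\le 1/2$ using dense red subgraph) and then split $S_R$ by the colour the new $Y$-vertex sends — take the majority colour class (loses factor $1/2$). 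So each step costs a constant number of halvings; after $k$ steps we still have reservoirs of size $\ge m_R/2^{O(k)}$, and choosing the $5k$ in the hypothesis (and $d$) so that $2^{5k}/4^{k-1}/2^{O(k)} \ge 1$ makes the whole thing go through. Finally, the edges between $X$ and each $Y$-vertex were all assigned a single colour *per $Y$-vertex* — and by a final pigeonhole on $Y$ (only $2$ choices), restrict to the $\ge k/2$... no: better, in the update also only ever keep $S_R$-vertices sending a fixed *global* colour to $Y$, decided the first time, so all of $X$–$Y$ edges end up one colour. That gives the $(k,k)$-join.

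4. Main obstacle: the bookkeeping to guarantee the bipartite edge colour between the final cliques is genuinely monochromatic while not letting the reservoirs decay below $1$. The danger is that naively one would lose a factor $2^k$ (the number of patterns) at *each* step, giving a $2^{k^2}$-type requirement, far exceeding the $2^{5k}$ in the hypothesis. The fix is that we should only ever branch on *one new vertex* at a time (a binary split, factor $2$) rather than on the whole pattern to an already-large clique, and fix the global bipartite colour at step one. I would be careful to check the constants: starting from $2^{5k}$, Lemma~\ref{lem:ramsey} with $d=4$ eats $4^{k-1} < 2^{2k}$, leaving $> 2^{3k}$; then $k$ steps each costing two factors of $2$ eats $2^{2k}$, leaving $> 2^{k} \ge 1$ comfortably, with room to spare — so $2^{5k}$ is a safe (and deliberately loose) choice, which is all we need.

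I will now write the proof carrying out exactly this interleaved greedy construction.
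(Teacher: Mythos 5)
There is a genuine gap in Step 3, which is the heart of the lemma: forcing all $k^2$ edges between the two cliques to carry a single colour. Your final ``fix'' --- decide the bipartite colour $c$ once, at the first step, and thereafter keep only the $S_R$-vertices sending colour $c$ to each newly added $Y$-vertex --- destroys the size guarantee: the factor-$2$ loss per step is only valid if you are free to take the \emph{majority} class of each binary split, and once $c$ is fixed in advance the class you are forced to keep may well be empty. Conversely, if you do take the majority at each step, the recorded colour may change from step to step, and it only controls the edges from \emph{later} $X$-vertices to that particular $Y$-vertex; the edges from earlier $X$-vertices to later $Y$-vertices are governed by the symmetric splits of $S_B$, so the two triangular halves of the $k\times k$ bipartite graph are controlled by two independent sequences of majority choices that need not agree, and the proposal does not say how to reconcile them. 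There is also a secondary quantitative problem: the loss when you restrict $S_R$ to the red neighbourhood of a new $X$-vertex is \emph{absolute} (up to $m_R/d$ vertices of $R'$), not a fixed fraction of the current $|S_R|$; after a few halvings $|S_R|$ drops below $m_R/d$ and a single such restriction can annihilate the reservoir, so making the interleaved scheme survive $k$ steps forces $d$ (and hence the initial set size) up to roughly $2^{k}$ per coordinate, i.e.\ a $2^{\Theta(k^2)}$ hypothesis --- exactly the blow-up you were trying to avoid.

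The paper's proof avoids the interleaving entirely. It first extracts a blue $K_{4k}$ inside $B$ (possible by the one-sided Ramsey bound since $B$ has no red $K_k$), takes the majority colour between $R$ and this clique (say red), and then uses a counting/convexity argument: by Jensen the number of red stars with centre in $R$ and $k$ leaves in the $K_{4k}$ is at least $|R|\binom{2k}{k}$, so by pigeonhole over the $\binom{4k}{k}$ possible leaf sets some fixed $k$-subset $Y$ of the blue clique has at least $|R|/3^k \ge 2^{2k}$ common red neighbours in $R$; a red $K_k$ among those (Ramsey again, since $R$ has no blue $K_k$) completes the join. If you want something closer in spirit to your greedy scheme, the workable variant is to process the vertices of the blue $K_{4k}$ \emph{one at a time against the whole of $R$}, halving $R$ by the majority colour at each of the $4k$ steps and recording that colour, and only at the end pigeonhole to find $k$ blue-clique vertices with the same recorded colour; the essential points are that the bipartite colour is chosen a posteriori rather than at step one, and that the red $K_k$ is extracted from the final surviving set by a single application of Ramsey's theorem rather than built greedily alongside the other clique.
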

\begin{proof}
Since there is no red $K_k$ inside $B$, by the standard Ramsey estimate we can find a blue $K_{4k}$ inside $B$, which we call $B'$. Now consider the majority colour between $R$ and $B'$, w.l.o.g. say red. Let $d(v)$ denote the red degree for $v\in R$ towards $B'$. Then $\sum_{v \in R} d(v) \ge |R||B'|/2=2|R|k$. On the other hand the number of red stars with $k$ leaves in $B'$ and centre in $R$ is 
$$\sum_{v \in R} \binom{d(v)}{k} \ge |R|\binom{2k}{k},$$
using Jensen's inequality. In particular, since there are $\binom{4k}{k}$ choices for the leaf set this means there is a set $R'$ of at least $|R| \binom{2k}{k}/\binom{4k}{k} \ge |R|/3^k$ vertices in $R$ joined in red to some fixed set of $k$ vertices inside our $B'$ (which spans a blue clique). Since $|R'| \ge |R|/3^k \ge 2^{2k}$ and there is no blue copy of $K_k$ in $R$, so also in $R'$, the usual Ramsey estimate tells us there is a red copy of $K_k$ in $R'$. This $K_k$ together with the blue $K_k$ in their common red neighbourhood make our desired join.
\end{proof}

The final preliminary lemma tells us that if we have a graph $G$ with large minimum degree we can find a large subset with the property that all vertices of $G$ have many neighbours inside this subset.
\begin{lem}\label{lem:subset}
Let $m' \ge m$. Any $m'$-vertex graph $G$ with minimum degree $\delta$ has an $m$-vertex subset $S$ with the property that every vertex of $G$ has more than $d\ge \frac m2$ neighbours in $S$, provided $mm'< r^{\floor{(r-1)d}}$ where $r:=\frac{m}{m'}/\frac{d}{\delta}\ge 1$.
\end{lem}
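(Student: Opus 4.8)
The plan is to choose $S$ at random and close the argument with a union bound. Let $S$ be a uniformly random $m$-element subset of $V(G)$ (possible since $m\le m'$); I will show that with positive probability every vertex of $G$ has more than $d$ neighbours in $S$, which yields the lemma. For a fixed vertex $v$ write $X_v=|N(v)\cap S|$. Since $S$ is a sample of size $m$ drawn without replacement from the $m'$ vertices of $G$, the variable $X_v$ is hypergeometric with $\E X_v=\deg(v)\cdot\frac{m}{m'}\ge \delta\cdot\frac{m}{m'}=rd\ge d$.

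Next I would apply Hoeffding's inequality for sampling without replacement (which gives the same bound as in the independent case): with $t_v:=\E X_v-d\ge (r-1)d\ge 0$ one gets
\[
\Pr[X_v\le d]\le \exp(-2t_v^2/m)\le \exp(-2((r-1)d)^2/m).
\]
The hypothesis $d\ge m/2$ is precisely what turns this into a useful bound: since $2d/m\ge 1$ and $\ln r\le r-1$, we have $\frac{2(r-1)d}{m}\ge r-1\ge \ln r$, hence $\frac{2((r-1)d)^2}{m}\ge (r-1)d\ln r$ and therefore
\[
\Pr[X_v\le d]\le r^{-(r-1)d}\le r^{-\floor{(r-1)d}}.
\]
If $\deg(v)>\delta$ then $t_v$ is only larger, so the same bound holds for every vertex. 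A union bound over the $m'$ vertices of $G$ then shows the probability that some vertex has at most $d$ neighbours in $S$ is at most $m'r^{-\floor{(r-1)d}}\le mm'r^{-\floor{(r-1)d}}<1$, using the assumed inequality $mm'<r^{\floor{(r-1)d}}$. Hence a suitable $S$ exists.

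The only real content is the choice of tail bound, and this is also the one place where care is needed: a naive multiplicative Chernoff estimate would only give $\Pr[X_v\le d]\le e^{-(r-1-\ln r)d}$, which is too weak, since $(r-1)\ln r\ge r-1-\ln r$. What rescues the bound is the variance estimate $\mathrm{Var}(X_v)\le m/4\le d/2$ (this is exactly where $d\ge m/2$ enters), which is encoded by the Hoeffding exponent $2t_v^2/m$. The extra factor of $m$ in $mm'$ and the floor in the statement are slack; they could instead be used to absorb the loss from a more elementary, $\mathrm{poly}(m)$-lossy estimate of the hypergeometric point/tail probability (e.g. bounding $\Pr[X_v\le d]$ by $m$ times its largest term $\binom{\delta}{d}\binom{m'-\delta}{m-d}/\binom{m'}{m}$) in place of Hoeffding's inequality.
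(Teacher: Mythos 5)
Your proof is correct, and it follows the same skeleton as the paper's — a uniformly random $m$-subset followed by a union bound over all $m'$ vertices — but the tail estimate, which is the entire content of the lemma, is obtained by a genuinely different route. The paper uses no concentration inequality: it first reduces to integer $d$, sets $d'=\floor{\delta m/m'}$, bounds the hypergeometric lower tail by $m$ times its largest term $\binom{\delta}{d}\binom{m'-\delta}{m-d}/\binom{m'}{m}$, and then telescopes ratios of binomial coefficients to arrive at exactly $m\cdot r^{-\floor{(r-1)d}}<1/m'$; this is precisely the ``elementary, $\mathrm{poly}(m)$-lossy'' alternative you sketch in your final sentence, and it is where the factor $m$ in the hypothesis $mm'<r^{\floor{(r-1)d}}$ and the condition $2d\ge m-1$ (in the form $m\le d+d'$) are actually consumed. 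You instead invoke Hoeffding's inequality for sampling without replacement and convert the exponent via $2(r-1)d/m\ge r-1\ge\ln r$, which is where $d\ge m/2$ enters for you; the computation checks out, the resulting per-vertex bound $r^{-(r-1)d}$ is at least as strong as the paper's, your argument needs no integrality reduction for $d$, and the factor $m$ in the hypothesis becomes pure slack. Your side remark that a multiplicative Chernoff bound would lose exactly the needed factor is also accurate. The one thing to make explicit in a write-up is the precise form of Hoeffding's theorem for sampling without replacement being used, since the reduction of the hypergeometric tail to the binomial-type bound is the only non-elementary ingredient; conveniently, it is in the same 1963 paper of Hoeffding that the authors already cite for the Azuma--Hoeffding inequality.
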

\begin{proof}
We note that we may assume that $d$ is an integer since otherwise we may reduce its value to its floor, as it is easy to see the main condition only relaxes by reducing $d$ and we replace the $2d \ge m$ assumption with $2d \ge m-1$. 

Let us choose an $m$-vertex subset uniformly at random. For convenience we will denote by $d'=\floor{\delta m/m'}\ge d+1$ (one should think of this as a lower bound on the expected number of neighbours, of any vertex of $G$, which got selected into our random subset), where the inequality follows since $d'-d=\floor{(r-1)d} \ge 1,$ as otherwise $\floor{(r-1)d}=0$ and our main assumption $mm'< r^{\floor{(r-1)d}}$ fails. 
Given a fixed vertex, the probability that at most $d$ among some fixed set of size $\delta$ of its neighbours got selected in the subset is at most 
\begin{align*}
\sum_{t=0}^{d} \binom{\delta}{t}\binom{m'-\delta}{m-t}/\binom{m'}{m}&\le
m\cdot \binom{\delta}{d}\binom{m'-\delta}{m-d}/\binom{m'}{m} \le m\cdot \binom{\delta}{d}/\binom{\delta}{d'}\cdot \binom{m'-\delta}{m-d}/\binom{m'-\delta}{m-d'}\\
&= m\cdot \prod_{i=1}^{d'-d}\frac{d+i}{m-d'+i} \cdot \prod_{i=1}^{d'-d}\frac{m'-\delta-(m-d)+i}{\delta-d'+i} \\
&\le m \cdot \left(\frac{dm'}{m(m'-\delta)}\right)^{d'-d} \cdot \left(\frac{m'-\delta}{\delta}\right)^{d'-d}\\
&= m \cdot \left(\frac{dm'}{\delta m}\right)^{d'-d}=m\cdot r^{-\floor{(r-1)d}}<1/m',
\end{align*}
where in the first inequality we used that $\binom{\delta}{t}\binom{m'-\delta}{m-t}$ is non-decreasing in $t$ for $t+1 \le d'$; in the second inequality we used that $\binom{m'}{m} =\sum_{t=0}^{\delta}\binom{\delta}{t}\binom{m'-\delta}{m-t} \ge \binom{\delta}{d'}\binom{m'-\delta}{m-d'}$; the third inequality follows since $\frac{d+i}{m-d'+i} \le \frac{d}{m-d'} \le \frac{dm'}{m(m'-\delta)}$ for any $i\ge 1$ since $m\le 2d+1 \le d+d'$
and $\frac{m'-\delta-(m-d)+i}{\delta-d'+i}\le \frac{m'-\delta-(m-d')}{\delta-\delta m/m'}\le \frac{m'-\delta-(m-\delta m/ m')}{\delta-\delta m/m'}=\frac{m'-\delta}{\delta}$
for any $1 \le i\le d'-d.$ This shows by an immediate union bound over all $m'$ vertices of $G$ that the desired subset exists.
\end{proof}

\section{A tiling lemma}

Given a graph $H$ we say that there is an \emph{$H$-tiling} of a graph $G$ if we can find vertex disjoint copies of $H$ in $G$ which cover all but at most $|G| \bmod{|H|}$ many vertices of $G$. When $|H| \mid |G|$ and we want to stress the point that an $H$-tiling covers all vertices of $G$ we will refer to it as a \emph{perfect $H$-tiling}. 

The goal of this section is to prove the following tiling lemma, which strengthens the classical Hajnal-Szemer\'edi Theorem \cite{H-S} when we know our graph has small independence number. 

\begin{lem}\label{lem:tiling}
Any graph $G$ with at least $4^{k+9}k^3$ vertices, with $\alpha(G) < k$ and $\delta(G) \ge \frac78 |G|$ admits a $K_k$-tiling.
\end{lem}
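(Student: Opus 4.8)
The plan is to combine the Hajnal--Szemer\'edi theorem with the small independence number hypothesis in an iterative fashion. The issue is that Hajnal--Szemer\'edi guarantees a $K_t$-tiling only when the minimum degree is at least $(1-1/t)|G|$; with $\delta(G) \ge \frac78|G|$ this directly yields only a $K_8$-tiling, which is useless if $k$ is large. The key observation is that $\alpha(G) < k$ forces the \emph{blow-up structure} to be rich: inside any $K_8$ of such a tiling, or more usefully inside a suitable partition of $V(G)$, we can upgrade small cliques to $K_k$'s using the fact that $G$ has no independent set of size $k$ and very high minimum degree.

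First I would apply the Hajnal--Szemer\'edi theorem with $t = 8$ to obtain a partition of $V(G)$ into roughly $|G|/8$ parts, each spanning a $K_8$, with at most one smaller leftover part. Then I would group these $K_8$'s together: I want to find, among these cliques, many that can be merged. Concretely, consider an auxiliary step where we look at $k$ of these vertex-disjoint $K_8$'s at a time; since $\delta(G) \ge \frac78|G|$, any vertex misses at most $|G|/8$ others, so within a collection of $k/8 \cdot$ (something) cliques a simple counting/greedy argument shows a large fraction of pairs of cliques are completely joined. The cleanest route is probably: repeatedly pull out a $K_k$ greedily. Given the current graph $G'$ (still with $\alpha(G') < k$ and $\delta(G') \ge \frac78|G'| - o(|G'|)$ as long as we have removed few vertices), use \Cref{lem:ramsey} with $d$ a constant — say $d = 9$ — to find a subgraph of order $m \ge |G'|/9^{k-1}$ with minimum degree at least $\frac89 m$; this has a $K_9$-tiling by Hajnal--Szemer\'edi, but more to the point, iterating inside it, or applying \Cref{lem:ramsey} again, one extracts an actual $K_k$. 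Remove that $K_k$ and repeat.

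The quantitative bookkeeping is where the $4^{k+9}k^3$ bound comes from, and this is the main obstacle. Each extracted $K_k$ removes $k$ vertices, and we need to do this about $|G|/k$ times, but each extraction relies on the minimum degree staying close to $\frac78|G|$; since we may remove up to a $(1 - 1/k)$-fraction of the vertices before stopping, I would instead argue in a single pass. The right framing: show that $G$ contains $K_k$-factors of almost-all of $V(G)$ by a defect version. Split $V(G)$ into $k$ classes of nearly equal size via an equitable partition respecting a Hajnal--Szemer\'edi $K_8$-partition, so that the $K_8$-cliques are distributed and each class inherits $\delta \ge (\frac78 - \frac1k)$ times its size within the appropriate bipartite-like structure; then greedily thread a $K_k$ through one vertex (or one $K_8$) from each of $k/8$ rounds. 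The delicate point is ensuring that at every step the common neighbourhood of the partially-built clique is still large enough to contain a vertex in the next class — this is exactly where $\delta(G) \ge \frac78|G|$ (so that $8$ cliques already eat only a $1/8$-fraction of non-neighbours) and $\alpha(G) < k$ (so that any large set contains the needed clique, via the Ramsey estimate $|G|/4^k \ge$ threshold) combine, and tracking the error terms through $k$ rounds is what forces the $4^{k+9}k^3$ lower bound on $|G|$.

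Thus the steps, in order: (1) apply Hajnal--Szemer\'edi with $t=8$ to get a near-perfect $K_8$-tiling $\mathcal{T}$; (2) form an equitable partition $V_1, \dots, V_{k}$ (or fewer classes, $\lceil k/8 \rceil$ of them) compatible with $\mathcal{T}$ so each class meets each clique in a controlled way and retains high relative minimum degree; (3) process the $K_8$'s round by round, in each round using \Cref{lem:ramsey}-type extraction inside the shrinking common neighbourhood to pick up a fresh $K_8$ that extends the current partial clique, maintaining the invariant that the common neighbourhood has size at least $|G|/4^{O(k)}$; (4) after $\lceil k/8 \rceil$ rounds we have built vertex-disjoint $K_k$'s covering all but $O(k)$ vertices, and absorb or discard the remainder since it has size at most $|G| \bmod k$. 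I expect step (3) — keeping the common-neighbourhood size above the Ramsey threshold through all rounds simultaneously while the tiling consumes vertices — to be the crux, and it is precisely this that dictates the stated size requirement.
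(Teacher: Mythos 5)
There is a genuine gap, and it is the central one. Recall that a $K_k$-tiling must cover all but at most $|G| \bmod k \le k-1$ vertices. Any greedy or round-by-round extraction of disjoint $K_k$'s — whether threaded through an equitable partition, through a Hajnal--Szemer\'edi $K_8$-partition, or directly via the Ramsey estimate — can only be guaranteed to continue while the set of uncovered vertices has size at least $r(K_k)$, which can be as large as roughly $2^{k/2}$ (and is only bounded above by $4^k$). Once fewer than $r(K_k)$ vertices remain, the leftover set may contain no $K_k$ at all despite having far more than $k-1$ vertices, since $\alpha < k$ does not force a $K_k$ in sets below the Ramsey threshold. Your final step asserts that the remainder has size $O(k)$, respectively ``at most $|G| \bmod k$,'' and that one can ``absorb or discard'' it, but no mechanism is provided for either claim; this is precisely the point where the proof has to do real work.

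The paper resolves this with the absorption method: \emph{before} running the greedy extraction, it constructs a set $A$ (built from ``local absorbers'' $L_S$ — a clique $K_S$ plus cliques $C_i$ in common neighbourhoods, wired together by a sparse resilient bipartite template graph from \Cref{resilient-graph}) with the property that $G[A \cup R]$ has a perfect $K_k$-tiling for \emph{every} residual set $R$ of size up to $4^k$. Then a maximal disjoint collection of $K_k$'s is taken outside $A$, the leftover $R$ has size at most $r(K_k) \le 4^k$ by Ramsey, and $A$ swallows it. Your proposal contains none of this; the Hajnal--Szemer\'edi invocation and the equitable partition are not used in the paper and do not substitute for the absorber. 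The minimum-degree and independence-number hypotheses are indeed used roughly as you describe (to find cliques in large common neighbourhoods), but only inside the construction of the absorber, not in a one-pass tiling argument.
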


It is easy to see that in terms of the requirement on the size of $G$, one can not do better than exponential in $k$, by taking $G$ to be the random graph. 
It is easy to improve the $7/8$ factor in the minimum degree condition to any $1/2+\eps$ with $\eps>0$. This is best possible since one may take $G$ to be a vertex disjoint union of two cliques of almost the same size. We opt not to prove it in this optimal form for simplicity and since it is not necessary for our applications.    

A result along the same lines was obtained by Nenadov and Pehova \cite{N-P}, in fact they make a much weaker assumption on the independence number which results in the requirement for the ground graph to be significantly larger than the tiling graph (they make use of the Szemer\'edi regularity lemma), which is far from sufficient for us. Even if one only focuses on the absorption part of their argument, their approach requires too large a ground set. That said the overall structure of our argument is very similar to theirs, as well as to many other examples of the use of the absorption method. In fact, since we work with a rather strong assumption it makes for a great illustrative example for the method since most of the somewhat technical tools usually accompanying it, such as regularity lemma are not needed here.

Before turning to the proof we state, and prove for the sake of completeness, the following auxiliary proposition due to Montgomery \cite{richard-graph} stating the existence of sparse bipartite graphs, which have certain ``resilience'' properties with respect to matchings.

\begin{prop}\label{resilient-graph}
There exists a bipartite graph, with bipartition $(X \cup Y,Z)$ where $X$ and $Y$ are disjoint, $|X|=|Y|=2k$ and $|Z|=3k$ such that:
\begin{enumerate}[label=\alph*)]
    \item\label{itm:max-deg} Maximum degree is at most $40$.
    \item\label{itm:matching} Given any subset $X'\subset X$ with $|X'|=k$, there is a perfect matching between $X' \cup Y$ and $Z$.
\end{enumerate} 
\end{prop}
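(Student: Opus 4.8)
The plan is to translate the matching property~(b) into a Hall-type expansion condition, realise the graph at random, and then verify the condition by a union bound organised by the size of a would-be violating set.

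\textit{Reduction.} Since $|X'\cup Y|=|Z|=3k$, Hall's theorem says that $B$ (restricted to $(X'\cup Y)\cup Z$) has a perfect matching precisely when $|N(S)\cap Z|\ge|S|$ for every $S\subseteq X'\cup Y$. Ranging over all $X'$ with $|X'|=k$, property~(b) becomes equivalent to:
\[
(\star)\qquad |N(S)\cap Z|\ge|S|\quad\text{for every }S\subseteq X\cup Y\text{ with }|S\cap X|\le k .
\]
(For the non-trivial direction: given such an $S$ with $a:=|S\cap X|\le k$, extend $S\cap X$ to a $k$-subset $X'$ of $X$, so $S\subseteq X'\cup Y$.) Throughout set $a=|S\cap X|$, $b=|S\cap Y|$, $s=a+b\le 3k$ and $t=3k-s$.

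\textit{Construction.} I would take $B=B_X\cup B_Y$, where $B_X$ is a uniformly random bipartite multigraph between $X$ and $Z$ with $30$ stubs at each vertex of $X$ and $20$ stubs at each vertex of $Z$ (a uniform perfect matching of the $60k$ stubs), $B_Y$ an independent copy of the same between $Y$ and $Z$, and then pass to the underlying simple graph. Passing to the simple graph changes no neighbourhood, so it suffices to check $(\star)$ for the multigraph; and in the multigraph every vertex of $X\cup Y$ has degree $\le 30$ while every vertex of $Z$ has degree $\le 20+20=40$, which is property~(a). (The value $40$ is exactly what this choice of degrees yields; any sufficiently large constant degrees would work.) It remains to show that $(\star)$ holds with positive — in fact high — probability.

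\textit{Verifying $(\star)$.} I would split into three regimes. (i) If $b\ge 1.9k$ then $F:=Y\setminus(S\cap Y)$ has $|F|\le 0.1k$ and $Z\setminus N(S)\subseteq Z_F:=\{z:\ N(z)\cap Y\subseteq F\}$; a first-moment estimate ($\mathbb E|Z_F|\le 3k\,(|F|/2k)^{20}$) plus a union bound over all $F$ with $|F|\le 0.1k$ shows that w.h.p.\ $|Z_F|\le|F|$ for all such $F$, whence $|Z\setminus N(S)|\le|F|=2k-b\le 3k-a-b=t$, i.e.\ $|N(S)|\ge s$. (ii) If $s\le 0.1k$ then $a,b\le 0.1k$, and a standard first-moment/union-bound argument gives w.h.p.\ that $B_X$ (resp.\ $B_Y$) expands every subset of $X$ (resp.\ $Y$) of size $\le 0.1k$ by a factor at least $2$, so $|N(S)|\ge\max\{|N(S\cap X)|,|N(S\cap Y)|\}\ge 2\max\{a,b\}\ge s$. (iii) If $s>0.1k$ and $b<1.9k$ — so $s<2.9k$ and the defect $t$ is of order $k$ — then for a fixed such $S$, $\Pr[|Z\setminus N(S)|\ge t+1]\le\binom{3k}{t+1}q^{\,t+1}$, where $q=\Pr[z\text{ has no neighbour in }S]\le C\big(\tfrac{2k-a}{2k}\big)^{20}\big(\tfrac{2k-b}{2k}\big)^{20}$ for an absolute constant $C$ (using independence of $B_X,B_Y$ and an elementary stub-matching estimate); summing over the $\le\binom{2k}{a}\binom{2k}{b}$ choices of $S$, and then over $(a,b)$, finishes the proof. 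The three regimes clearly cover every admissible $S$.

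\textit{Main obstacle.} The delicate point is regime (iii): because $(\star)$ restricts $X$ but not $Y$, there are of order $16^k$ candidate violating sets $S$ of size linear in $k$, and one must make sure the chosen constant degrees keep the per-set failure probability below $16^{-k}$ uniformly over all $(a,b)$ — this is exactly where the permitted degree $\le 40$ gets spent (the worst case is $a\approx b\approx k$, $t\approx k$, where the count $\sim 16^k$ must be beaten by $q^{\,t}\le C^{k}4^{-20k}$). What makes a constant degree possible at all is the asymmetry used in regime (i): when $|S|$ is within $O(k)$ of $3k$ the constraint $|S\cap X|\le k$ forces $S$ to contain all but few vertices of $Y$, so a \emph{weak} expansion property of $B_Y$ alone controls $Z\setminus N(S)$. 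Without exploiting this, $(\star)$ would fail for bounded degree — for instance, if some $z\in Z$ had a unique neighbour $y_0$ in $Y$, then for $S=X'\cup(Y\setminus\{y_0\})$ to satisfy $(\star)$ for all $k$-subsets $X'$, the vertex $z$ would need more than $k$ neighbours in $X$.
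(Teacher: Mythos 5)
Your strategy is sound and, in outline, correct, but it is genuinely different from the paper's. The paper randomises only a single $2k\times 2k$ bipartite graph (the union of $20$ independent random perfect matchings between $Y$ and a $2k$-subset $Z_1\subseteq Z$), establishes three symmetric properties of it (small sets on either side expand by a factor $2$; no empty $\lceil k/2\rceil\times\lceil k/2\rceil$ bipartite hole), and then obtains $X$ by duplicating $Y$ and the rest of $Z$ by duplicating $k$ vertices of $Z_1$. Hall's condition is then verified from the $Z$ side, and the duplication symmetry collapses the case analysis to essentially the two regimes ``$A$ small'' and ``$A$ large''. You instead put independent randomness on both $X$--$Z$ and $Y$--$Z$ (via a configuration model), check Hall from the $X\cup Y$ side, and pay for the lack of symmetry with a three-regime analysis whose hard case (iii) requires beating a union bound over $\sim 16^{k}$ sets $S$ uniformly in $(a,b)$ --- exactly the computation the duplication trick avoids. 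Your approach is more self-contained (no auxiliary ``core'' graph) and you correctly isolate where the degree budget is spent and why the asymmetric constraint $|S\cap X|\le k$ is what makes bounded degree possible at all; the paper's approach buys a much shorter verification at the cost of the slightly unintuitive duplication step.

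Two caveats, neither fatal. First, in regime (i) the parenthetical suggests you would apply Markov to $\mathbb{E}|Z_F|$ and then union bound over $F$; for $|F|=\Theta(k)$ the per-$F$ Markov bound is only $O(k)\cdot(0.05)^{20}$, a constant, which does not survive the union bound over the $\binom{2k}{|F|}=2^{\Theta(k)}$ choices of $F$. You need the two-set union bound $\Pr[|Z_F|\ge |F|+1]\le\binom{3k}{|F|+1}\left(|F|/2k\right)^{20(|F|+1)}$ --- the very estimate you correctly write in regime (iii), and the form the paper uses. Second, regime (iii) is asserted via its worst case $a\approx b\approx k$, but the quantity $\binom{2k}{a}\binom{2k}{b}\binom{3k}{t+1}q^{t+1}$ must be checked over the whole admissible range of $(a,b)$ (the binomial count and the gain $q^{t+1}$ both vary with $(a,b)$, and near $s\approx 0.1k$ the gain is only singly exponential); the numbers do work out with your choice of degrees, but this sweep is where the real work of your route lies and it is only sketched.
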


\begin{proof}

Fix some $Z_1 \subseteq Z$ of size $2k$ and place independently $20$ random perfect matchings between $Y$ and $Z_1$. Let the graph $Q_0$ be the union of these matchings. Given subsets $A\subset Y$ and $B\subset Z_1$, and a random matching $M$, the probability that \emph{all} vertices of $B$ get matched by $M$ to a vertex in $A$ is $\binom{|A|}{|B|}/\binom{2k}{|B|}$. Thus, for any $t\leq k/2$, the probability there is some set $B\subset Z_1$, of size $t$ with neighbourhood of size less than $2t$ in $Q_0$, is by a simple union bound over possible choices for $B$ and all possible choices for a set of size $2t$ containing its neighbourhood at most
\[
\binom{2k}{t}\binom{2k}{2t}\left(\binom{2t}{t}/\binom{2k}{t}\right)^{20}
\leq \left({2e^3}\left(\frac kt\right)^{3}\left(\frac{t}{k}\right)^{20}\right)^t
= \left(2e^3\left(\frac{t}{k}\right)^{17}\right)^t \le 2^{-11t}.
\]
Taking the sum over all $t \le k/2$ we see that the probability that such a $B$ of arbitrary size smaller than $k/2$ exists is at most $2^{-10}$. An identical calculation shows that the probability there is an $A\subset Y$ with $|A|\leq k/2$ and neighbourhood of size less than $2|A|$ is at most $2^{-10}$.
The probability that there are two subsets $A\subset Y$ and $B\subset Z_1$, with $|A|=|B|=\lceil k/2\rceil$ and no edges between $A$ and $B$ is at most
\[
\binom{2k}{\ceil{k/2}}^2\left(\binom{\floor{3k/2}}{\ceil{k/2}}/\binom{2k}{\ceil{k/2}}\right)^{20}\leq (4e)^{k}\left(\frac{3}{4}\right)^{10k}\leq 2^{-k/2}. 
\]
Since $2^{-1/2}+2^{-10}+2^{-10}<1$ there exists a graph $G$ satisfying all three of these properties. Duplicate each vertex in $Y$ to get $X$, and then duplicate $k$ of the vertices in $Z_1$ to get the set $Z_2$ and set $Z=Z_1 \cup Z_2$. We claim that this new bipartite graph with bipartition $(X \cup Y, Z)$ has the desired properties. For \ref{itm:max-deg}, since $Y \cup Z_1$ induces 20 matchings and we duplicated every vertex at most once the maximum degree is indeed at most 40.

Now, take any set $X'\subset X$ of size $k$. To show \ref{itm:matching} we will verify Hall's condition between $Z$ and $X' \cup Y$. For $A\subset Z$, let $A'$ be the larger of the sets $A\cap Z_1$ and $A\cap Z_2$, so that $|A|\geq|A'|\geq |A|/2$.

If $|A'|\leq k/2$. Then by the first property we ensured for $G$ we know $A'$ has at least $2|A'|\ge |A|$ neighbours inside $Y$, and hence so does $A$ inside $X' \cup Y$, as desired. Otherwise, $|A'|> k/2$ so by our second property of $G$ there can be at most $k/2$ vertices in $Y$ which are not adjacent to any vertex of $A'$. Since we obtained $X$ by duplicating vertices of $Y$ the same holds about non-neighbours of $A'$ inside $X$. In particular, all but at most $k$ vertices of $X' \cup Y$ are adjacent to some vertex in $A'$ so we are done unless $|A|>2k$.

By taking a subset of $A$ of size $2k$ we can see from the previous paragraph that the set $B$ of vertices in $X' \cup Y$, not adjacent to any vertex in $A$ has size at most $k$. By symmetry (note that $X'$ is simply a set of $k$ duplicates of vertices in $Y$, same as $Z_2$ is for $Z_1$) we can repeat the above argument with $B$ in place of $A$ to conclude there are at least $|B|$ vertices in $Z$ adjacent to some vertex in $B$. In particular, $|A| \le 3k-|B|,$ which is also the number of vertices adjacent to some vertex of $A$, as desired.
\end{proof}

We are now ready to prove \Cref{lem:tiling}.

\begin{proof}[ of \Cref{lem:tiling}]
The key part of the proof is establishing the existence of an absorbing subset of vertices of $G$. In particular, we want to find an $A\subseteq V(G)$, with the property that for any subset $R \subseteq V(G) \setminus A$ of size at most $4^k$ there is an $K_k$-tiling of $G[A\cup R].$

Before showing how to find such an $A$ let us demonstrate how to conclude the argument once we have it. Take a maximal vertex disjoint collection of $K_k$'s in $V(G)\setminus A$. Let $R$ be the set of remaining vertices in $V(G) \setminus A$, since it contains no copy of $K_k$ and no independent set of size $k$ we know $|R|\le r(K_k) \le 4^k$ where we used the classical upper bound on Ramsey numbers due to Erd\H{o}s and Szeker\'es. Now by the assumed absorbing property of $A$ we can $K_k$-tile $G[R \cup A]$ which together with our removed family of disjoint $K_k$'s gives us the desired $K_k$-tiling of $G$.

Now let us turn to finding our absorber $A$. We first show that for any set $S$ of $k$ vertices $\{s_1,\ldots,s_k\}$ one can find a ``local absorber'', which we will denote by $L_S$. The key property we require from $L_S$ is that we can perfectly $K_k$-tile both $L_S$ and $S \cup L_S$. We also want to be able to construct these absorbers to be disjoint for a collection of sets $S$, so we will show that we can find one avoiding any given set $F \supseteq S$ of at most $n/2$ vertices. Since $n/2 \ge 4^k \ge r(K_k)$ we can find a copy of $K_k$ among available vertices, we denote it as $K_S=\{w_1,\ldots,w_k\}$. Now for each $i$ we know $s_i$ and $w_i$ have at least $3/4 \cdot n$ common neighbours in $G$ so there are at least $n/4$ among the available vertices. Since $n/4-k^2 \ge 4^k \ge r(K_k)$ we can find a copy $C_i$ of $K_{k-1}$ inside their common neighbourhood in such a way that all $C_i$'s are disjoint and are disjoint from $K_S$. We now take $L_S$ to be the union of $K_S$ and $C_1,\ldots, C_k$ (see \Cref{fig:0.1} for an illustration). Note that if we wish to $K_k$-tile $L_S$ we know it consists of $k$ disjoint cliques $\{w_i\} \cup C_i$ of size $k$ (see \Cref{fig:0.3} for an illustration) and if we wish to tile $S \cup L_S$ it consists of $k+1$ disjoint cliques $K_S, \{s_i\} \cup C_i$ so this is indeed possible (see \Cref{fig:0.2} for an illustration).

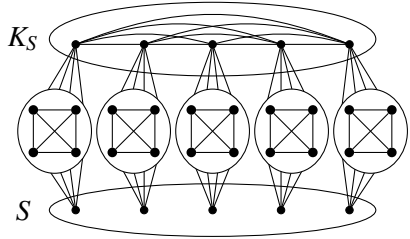
\begin{figure}[t]
\centering
\captionsetup{width=0.8\textwidth}
\begin{tikzpicture}[scale=0.7]
\defPt{0}{0}{x0}
\defPt{1.5}{0}{x1}
\defPt{3}{0}{x2}
\defPt{4.5}{0}{x3}
\defPt{6}{0}{x4}
\defPt{0.4}{1.65}{y0}
\defPt{1.7}{1.65}{y1}
\defPt{3}{1.65}{y2}
\defPt{4.3}{1.65}{y3}
\defPt{5.6}{1.65}{y4}
\defPt{0.4}{-1.5}{z0}
\defPt{1.7}{-1.5}{z1}
\defPt{3}{-1.5}{z2}
\defPt{4.3}{-1.5}{z3}
\defPt{5.6}{-1.5}{z4}

\fitellipsis{$(0,1.75)$}{$(6,1.75)$}{0.7};
\fitellipsis{$(0,-1.5)$}{$(6,-1.5)$}{0.5};

\foreach \i in {0,...,4}
{
\draw[] (y\i) -- ($(x\i)+(0.6,0)$);
\draw[] (y\i) -- ($(x\i)+(0.2,0)$);
\draw[] (y\i) -- ($(x\i)+(-0.2,0)$);
\draw[] (y\i) -- ($(x\i)+(-0.6,0)$);

\draw[] (z\i) -- ($(x\i)+(0.6,0)$);
\draw[] (z\i) -- ($(x\i)+(0.2,0)$);
\draw[] (z\i) -- ($(x\i)+(-0.2,0)$);
\draw[] (z\i) -- ($(x\i)+(-0.6,0)$);

\fitellipsis{$(x\i)-(0,0.7)$}{$(x\i)+(0,0.7)$}{0.7};
\pic[rotate=45, scale=0.4] at ($(x\i)$) {K4};
\draw[] (y\i) \smvx;
\draw[] (z\i) \smvx;
}


\node[] at ($(-0.6,-1.5)$) {$S$};
\node[] at ($(-0.6,1.75)$) {$K_S$};


\draw[] (y0) -- (y1) -- (y2) -- (y3) -- (y4);
\draw[] (y0) to[out=15,in=165] (y2) to[out=15,in=165] (y4);
\draw[] (y1) to[out=15,in=165] (y3);
\draw[] (y0) to[out=20,in=160] (y3);
\draw[] (y1) to[out=20,in=160] (y4);
\draw[] (y0) to[out=22,in=158] (y4);


\end{tikzpicture}
\caption{$L_S$: local absorber for $S$}
\label{fig:0.1}
\end{figure}

\begin{figure}[t]
\RawFloats
\begin{minipage}[t]{0.49\textwidth}
\centering
\captionsetup{width=\textwidth}
\begin{tikzpicture}[scale=0.7]
\defPt{0}{0}{x0}
\defPt{1.5}{0}{x1}
\defPt{3}{0}{x2}
\defPt{4.5}{0}{x3}
\defPt{6}{0}{x4}
\defPt{0.4}{1.65}{y0}
\defPt{1.7}{1.65}{y1}
\defPt{3}{1.65}{y2}
\defPt{4.3}{1.65}{y3}
\defPt{5.6}{1.65}{y4}
\defPt{0.4}{-1.5}{z0}
\defPt{1.7}{-1.5}{z1}
\defPt{3}{-1.5}{z2}
\defPt{4.3}{-1.5}{z3}
\defPt{5.6}{-1.5}{z4}

\fitellipsis{$(0,1.75)$}{$(6,1.75)$}{0.7};
\fitellipsis{$(0,-1.5)$}{$(6,-1.5)$}{0.5};

\foreach \i in {0,...,4}
{
\draw[] (y\i) -- ($(x\i)+(0.6,0)$);
\draw[] (y\i) -- ($(x\i)+(0.2,0)$);
\draw[] (y\i) -- ($(x\i)+(-0.2,0)$);
\draw[] (y\i) -- ($(x\i)+(-0.6,0)$);

\draw[color=red, line width=1.25pt] (z\i) -- ($(x\i)+(0.6,0)$);
\draw[color=red, line width=1.25pt] (z\i) -- ($(x\i)+(0.2,0)$);
\draw[color=red, line width=1.25pt] (z\i) -- ($(x\i)+(-0.2,0)$);
\draw[color=red, line width=1.25pt] (z\i) -- ($(x\i)+(-0.6,0)$);

\fitellipsis{$(x\i)-(0,0.7)$}{$(x\i)+(0,0.7)$}{0.7};
\pic[rotate=45, scale=0.4] at ($(x\i)$) {K4t};
\draw[] (y\i) \smvx;
\draw[] (z\i) \smvx;
}

\node[] at ($(-0.6,-1.5)$) {$S$};
\node[] at ($(-0.6,1.75)$) {$K_S$};

\draw[color=red, line width=1.25pt] (y0) -- (y1) -- (y2) -- (y3) -- (y4);
\draw[color=red, line width=1.25pt] (y0) to[out=15,in=165] (y2) to[out=15,in=165] (y4);
\draw[color=red, line width=1.25pt] (y1) to[out=15,in=165] (y3);
\draw[color=red, line width=1.25pt] (y0) to[out=20,in=160] (y3);
\draw[color=red, line width=1.25pt] (y1) to[out=20,in=160] (y4);
\draw[color=red, line width=1.25pt] (y0) to[out=22,in=158] (y4);

\foreach \i in {0,...,4}
{
\draw[] (y\i) \smvx;
}

\end{tikzpicture}
\caption{$K_5$-tiling of $L_S \cup S$}
\label{fig:0.2}
\end{minipage}\hfill
\begin{minipage}[t]{0.49\textwidth}
\centering
\captionsetup{width=\textwidth}
\begin{tikzpicture}[scale=0.7]
\defPt{0}{0}{x0}
\defPt{1.5}{0}{x1}
\defPt{3}{0}{x2}
\defPt{4.5}{0}{x3}
\defPt{6}{0}{x4}
\defPt{0.4}{1.65}{y0}
\defPt{1.7}{1.65}{y1}
\defPt{3}{1.65}{y2}
\defPt{4.3}{1.65}{y3}
\defPt{5.6}{1.65}{y4}
\defPt{0.4}{-1.5}{z0}
\defPt{1.7}{-1.5}{z1}
\defPt{3}{-1.5}{z2}
\defPt{4.3}{-1.5}{z3}
\defPt{5.6}{-1.5}{z4}

\fitellipsis{$(0,1.75)$}{$(6,1.75)$}{0.7};
\fitellipsis{$(0,-1.5)$}{$(6,-1.5)$}{0.5};

\foreach \i in {0,...,4}
{
\draw[color=red, line width=1.25pt] (y\i) -- ($(x\i)+(0.6,0)$);
\draw[color=red, line width=1.25pt] (y\i) -- ($(x\i)+(0.2,0)$);
\draw[color=red, line width=1.25pt] (y\i) -- ($(x\i)+(-0.2,0)$);
\draw[color=red, line width=1.25pt] (y\i) -- ($(x\i)+(-0.6,0)$);

\draw[] (z\i) -- ($(x\i)+(0.6,0)$);
\draw[] (z\i) -- ($(x\i)+(0.2,0)$);
\draw[] (z\i) -- ($(x\i)+(-0.2,0)$);
\draw[] (z\i) -- ($(x\i)+(-0.6,0)$);

\fitellipsis{$(x\i)-(0,0.7)$}{$(x\i)+(0,0.7)$}{0.7};
\pic[rotate=45, scale=0.4] at ($(x\i)$) {K4t};
\draw[] (y\i) \smvx;
\draw[] (z\i) \smvx;
}

\node[] at ($(-0.6,-1.5)$) {$S$};
\node[] at ($(-0.6,1.75)$) {$K_S$};

\draw[] (y0) -- (y1) -- (y2) -- (y3) -- (y4);
\draw[] (y0) to[out=15,in=165] (y2) to[out=15,in=165] (y4);
\draw[] (y1) to[out=15,in=165] (y3);
\draw[] (y0) to[out=20,in=160] (y3);
\draw[] (y1) to[out=20,in=160] (y4);
\draw[] (y0) to[out=22,in=158] (y4);

\end{tikzpicture}
\caption{$K_5$-tiling of only $L_S$}
\label{fig:0.3}
\end{minipage}
\end{figure}

We now proceed to specify our ``global'' absorber $A$. 
Let $\ell= \floor{\frac{n}{4^4k^2}}\ge 4^{k+5}k$. Let $X$ be a subset of $V(G)$ of size $2\ell$ with the property that any vertex of $G$ has at least $3\ell/2 =3/4 \cdot|X|$ neighbours in $X$. Existence of such an $X$ follows from \Cref{lem:subset} with $m'=n, m=2\ell, \delta \ge 7n/8$ and $d=3\ell/2$, so that $r \ge 7/6$ and its condition holds since $(7/6)^{\floor{\ell/4}} \ge 2^{n/(2\cdot 4^6k^2)} \ge n^2 \ge 2n\ell$.

We now take another, arbitrary, set of $2\ell$ vertices $Y$ and an arbitrary set $Z$ of size $3\ell(k-1)$ such that $X,Y,Z$ are disjoint. We further split $Z$ into $3\ell$ disjoint sets $Z_1,\ldots, Z_{3\ell}$ each of size $k-1$. We place each of $X,Y$ and $Z$ into our $A$. 

We now consider an auxiliary bipartite graph $D$, provided by \Cref{resilient-graph} with parts $X \cup Y$ and $\{Z_1,\ldots, Z_{3\ell}\}$, so with the property that every vertex has maximum degree $40$ but upon removing any $\ell$ vertices from $X$ the remainder has a perfect matching.

For every edge $e=(v,Z_i)$ of $D$ we place the local absorber $L_e:=L_{\{v\} \cup Z_i}$ avoiding any previously used vertices as well as $X,Y$ and $Z$ into $A$. We can always find it since there are at most $120\ell$ edges in $D$ and each $L_e$ we find has size $k^2$ so the total number of used vertices at any point is at most $4\ell+3\ell(k-1)+120\ell k^2 \le 127\ell k^2 \le n/2$. 

Finally, let us verify this $A$ has the desired property. Consider an $R \subseteq V(G) \setminus A$ of size at most $4^k$. Any $v \in R$, by our choice of $X$, has at least $3\ell/2 \ge 4^k$ neighbours in $X$, so we can find a copy of $K_k$ using $v$ and some $k-1$ vertices in $X$. In fact we can do so even if we disallow usage of up to $\ell$ vertices of $X$ since any vertex will still have $\ell/2\ge 4^k$ available neighbours left. So since $\ell \ge k|R|$ we can find a vertex disjoint collection of $K_k$'s which tiles $R$ and up to $\ell$ vertices of $X$. We then proceed to find disjoint copies of $K_k$ in the remainder of $X$ until we are left with at least $\ell$ and at most $\ell+k-1$ vertices, which we can since $\ell \ge 4^k$ so by Ramsey estimate we must still be able to find a new copy of $K_k$. At this point we discard the at most $k-1$ vertices to obtain a subset $X'$ of size exactly $\ell$. Since we discarded less than $|K_k|$ vertices, provided we can perfectly $K_k$-tile the rest of $A \cup R$ this will prove the result. Now, by the second property of our auxiliary graph $D$ there is a matching between $X' \cup Y$ and $\{Z_1,\ldots, Z_{3\ell}\}$. For any edge $e=(v,Z_i)$ of this matching we can $K_k$-tile $L_e \cup \{v\} \cup Z_i$ which all together gives us an $K_k$-tiling of $X'\cup Y \cup Z$. What is left to $K_k$-tile are $L_e$'s for any $e$ which was not a part of our matching which we can also do by the defining property of our local absorber. This completes the proof.
\end{proof}

\section{Ramsey number of copies}
In this section we will prove our main two results. We begin with the asymmetric and slightly simpler case, namely that of \Cref{thm:asym}.

\subsection{Asymmetric problem}\label{sec:asym}
We begin with the upper bound, which shall motivate the example for the lower bound as well.

\begin{thm}\label{thm:asym-ub}
Let $G$ and $H$ be graphs with $k=\max(|G|,|H|)$. Then, provided $n \ge 9^{2k+6}$, we have
		$$r(G,nH)\le n|H|+r(\mathcal{D}(G),H)-1.$$
\end{thm}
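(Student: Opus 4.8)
Let $N = n|H| + r(\mathcal{D}(G),H) - 1$ and fix any $2$-colouring of the edges of $K_N$, with colours red (for $G$) and blue (for $H$); assume for contradiction there is no red $G$ and no blue $nH$. First I would greedily pull out vertex-disjoint blue copies of $H$ one at a time. Suppose we have extracted $t < n$ of them; the number of remaining vertices is $N - t|H| \ge N - (n-1)|H| = |H| + r(\mathcal{D}(G),H) - 1 \ge r(\mathcal{D}(G),H)$. So on the remaining vertex set, since there is no blue $H$ (else we could extract one more and it is not yet $n$), by definition of $r(\mathcal{D}(G),H)$ there must be a red copy of some $D \in \mathcal{D}(G)$, i.e. a copy of $G$ minus a maximum independent set. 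The key point is that this red $D$ lives on very few vertices, namely $|G| - \alpha(G) \le k$ of them.

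**Next I would use the extracted blue copies of $H$ as a reservoir to complete the red $D$ into a red $G$.** We have $t$ disjoint blue $H$'s. The graph $D$ was obtained from $G$ by deleting a maximum independent set $I$ of size $\alpha(G)$, so $G$ is obtained from $D$ by adding back $\alpha(G) \le k$ vertices forming an independent set, each joined (in $G$) to some subset of $V(D)$. To embed these $\alpha(G)$ extra vertices we need to find, among the vertices of the extracted blue $H$'s, $\alpha(G)$ vertices that have the correct red adjacencies to the copy of $D$ — and crucially they need not be red-adjacent to each other, since $I$ is independent. So this is a problem of finding, for each of $\le k$ prescribed ``types'' (a type being a subset of the $\le k$ vertices of $D$ to which the new vertex must be red-connected), a vertex with that red neighbourhood pattern onto $D$. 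If $t$ is large enough we can afford to throw away vertices with bad patterns: each vertex of the reservoir has one of at most $2^{|V(D)|} \le 2^k$ patterns onto $D$, so if the reservoir has more than $k \cdot 2^k$ vertices we find the $k$ we need — but we must be careful to also avoid a blue $H$ forming issue, which does not arise since we only need red adjacencies and independence among the new vertices. A cleaner way: since $t \ge$ (some bound) and we want $\alpha(G)$ suitable vertices, a counting argument over the $2^{|V(D)|}$ possible red-neighbourhood types suffices.

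**Here is where the parameter $n \ge 9^{2k+6}$ must do its work, and I expect this to be the main technical obstacle.** The subtlety is that we need the reservoir of blue $H$-copies to be large enough, but $t$ could be small — if the greedy extraction stalls at small $t$ because the remaining graph, while having no blue $H$, also (after removing a red $D$) keeps producing red $D$'s that we must somehow combine. Actually, re-examining: the extraction of blue $H$'s proceeds until either $t = n$ (done, contradiction) or the remaining vertex set has no blue $H$; but the count above shows the remaining set always has $\ge r(\mathcal{D}(G),H)$ vertices as long as $t \le n-1$, hence always contains a blue $H$ unless $t = n$. So in fact the greedy process runs all the way to $t = n$, giving a blue $nH$ — contradiction! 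Wait, that would make the bound trivial, so the real content must be that one cannot always extract disjoint blue $H$'s naively; one must instead first secure the red $D$ and then extract $n$ disjoint blue $H$'s from the rest. So the correct order is: first find a red $D \subseteq G$ on $\le k$ vertices in $K_N$ (possible since $N \ge r(\mathcal{D}(G),H) \ge r(D,H)$... careful, we need no red $G$, which is weaker than no red $D$, so this needs the no-blue-$H$-on-remainder structure). The honest plan: repeatedly extract blue $H$'s; if we ever reach $n$ we are done; so at some stage of having $t<n$ blue $H$'s extracted, the leftover $N - t|H| \ge r(\mathcal{D}(G),H)$ vertices contain no blue $H$, hence contain a red $D$; now we have both $t \le n-1$ disjoint blue $H$'s \emph{and} a red $D$ with $t|H| + |D| \le N$... no. I would need to instead embed the $\le k$ vertices of $D$ first using the whole graph, reserving $n|H|$ vertices for the blue copies, and this is where $n$ exponential in $k$ enters — to guarantee that after fixing any $\le k$ red-$D$ vertices and deleting at most $k \cdot 2^k$ ``absorbed'' vertices used to complete $G$, the remaining $\ge n|H|$ vertices still contain $n$ disjoint blue $H$'s, which follows by the same greedy extraction since at each stage $\ge |H|$ vertices remain \emph{and none of them can host a blue $H$ with... } — the cleanest route is to argue that in the leftover set of $\ge r(\mathcal{D}(G),H)$ vertices with no blue $H$, we find red $D$, then we use the $n$ (or close to $n$) already-extracted blue copies minus a few whose vertices complete $D$ to a red $G$; the slack $n \ge 9^{2k+6}$ absorbs the ``few'' copies sacrificed. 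I would work out the exact bookkeeping so that the number of sacrificed blue $H$-copies plus auxiliary losses stays well below the slack between $N$ and $(n-1)|H| + r(\mathcal{D}(G),H)$.
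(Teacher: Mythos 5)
Your high-level target is the right one -- show that the set of vertices not covered by disjoint blue copies of $H$ contains neither a blue $H$ nor a red member of $\mathcal{D}(G)$, hence has at most $r(\mathcal{D}(G),H)-1$ vertices -- but the proposal does not close either of the two steps that make this work, and your third paragraph essentially concedes this.

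First, the completion of a red $D\in\mathcal{D}(G)$ to a red $G$ is not a pigeonhole statement. Classifying reservoir vertices by their red-neighbourhood pattern on $V(D)$ only shows that \emph{some} pattern is popular; the patterns you actually need (vertices red-joined to prescribed subsets of $D$) may be realised by no vertex at all, e.g.\ the adversary may colour every edge between the extracted blue copies and the leftover set blue. The paper's mechanism is different: before extracting anything, it reserves (via \Cref{lem:ramsey}, using the absence of a red $G$) a large set $A$ in which every vertex has blue degree at least $\tfrac{8}{9}|A|$; it then absorbs into new blue copies of $H$ every leftover vertex having at least $4^k$ blue neighbours in $A$. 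The surviving leftover vertices $C_1$ each have \emph{fewer} than $4^k$ blue neighbours in $A_1$, so any red $D$ inside $C_1$ has at least $|A_1|-(k-1)4^k\ge\alpha(G)$ vertices of $A_1$ joined to it entirely in red, which completes $D$ to a red $G$. Without first securing such an $A$, there is nothing forcing the red adjacencies you need.

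Second, even granting that the leftover is small, your greedy extraction is quantitatively too lossy. Greedy extraction stalls on a set with no blue $H$ and no red $G$, i.e.\ on up to $r(G,H)-1\approx 4^k$ wasted vertices, whereas the theorem only tolerates a waste of $r(\mathcal{D}(G),H)-1$, which can be exponentially smaller (already for $G=H=K_k$). This is the real content of the upper bound, and it is where the paper's tiling lemma (\Cref{lem:tiling}, an absorption argument in the spirit of Hajnal--Szemer\'edi for graphs with small independence number) enters: the reserved set $A_1$ has blue minimum degree at least $\tfrac{7}{8}|A_1|$ and no red $G$, so it admits a blue $H$-tiling covering all but fewer than $|H|$ of its vertices. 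That near-perfect tiling of $A_1$, combined with the perfectly tiled $B_1$ and $|C_1|\le r(\mathcal{D}(G),H)-1$, is what yields $n$ disjoint blue copies. The exponential lower bound on $n$ is consumed by \Cref{lem:ramsey} and \Cref{lem:tiling}, not by bookkeeping slack in a greedy count; as written, your argument has no route to a tiling that wastes fewer than $4^k$ vertices.
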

\begin{proof}
Let us consider a two coloured complete graph $K$ on $N=n|H|+r(\mathcal{D}(G),H)-1\ge n$ vertices. Let us assume towards a contradiction that $K$ does not contain a red $G$ or a blue $nH$.

Since there is no red copy of $G$ \Cref{lem:ramsey} with $d=9$ applied to the blue graph implies that there exists a subset of vertices $A$ with $m:=|A| \ge N/9^{k-1} \ge 9^{k+7}$ in which every vertex has blue degree at least $\frac{8}{9} m$. 

Now among the remaining vertices $V(K) \setminus A$ we take a maximal vertex disjoint collection of blue copies of $H$ and place their vertices in a set $B$. The set of remaining vertices, so $V(K) \setminus (A \cup B)=:C$, contains no red copy of $G$ nor a blue copy of $H$ so $|C|<r(G,H) \le 4^k$.

For any vertex $v$ of $C$ which has at least $4^k$ blue neighbours in $A$, we know that in its neighbourhood we can find a blue copy of $H$, so in particular we may replace one of the vertices in the copy by $v$ to obtain a blue $H$ using $v$ and some $k-1$ vertices of $A$. As long as we can find such a $v$ we take this blue copy of $H$ and add it to $B$. When we stop let $A_1$ be the set of the remaining vertices of $A$, so $|A_1|\ge |A|-|C|(k-1)$; let $B_1$ denote the new set $B$, which still has the property that we can perfectly tile its vertices with blue $H$; and let $C_1$ be the remaining vertices of $C$, which now each have less than $4^k$ blue neighbours in $A_1$.


In addition to not having any blue copies of $H$ we claim $C_1$ can not contain any member of $\mathcal{D}(G)$ in red. Indeed since $|A_1| \ge k4^k \ge (k-1)4^k + \alpha(G)$ if we could find a red member of $\mathcal{D}(G)$ in $C_1$ at most $(k-1)4^k$ vertices of $A_1$ send even a single blue edge to it. This implies we can find $\alpha(G)$ vertices in $A_1$ sending \emph{only} red edges to $C_1,$ which we can use to create a red copy of $G$. This implies $|C_1| \le r(\mathcal{D}(G),H)-1$. In particular, by our assumption on the size of $K$ we know $|A_1 \cup B_1| \ge n|H|$ so if we can find a blue $H$-tiling of $A_1 \cup B_1$ we obtain a contradiction. Since we already can perfectly tile $B_1$ it only remains to show that we can tile $A_1$ as well. We know that $|A_1|\ge |A|-4^k\cdot(k-1)\ge 9^{k+7}-4^k\cdot(k-1)\ge  4^{k+9}k^3$ and every vertex has minimum blue degree in $A_1$ at least $\frac{8}{9} \cdot m -4^k\cdot (k-1) \ge \frac78 \cdot |A_1|$ so \Cref{lem:tiling} applies and allows us to tile $A_1$ with blue copies of $H$, completing the proof.
\end{proof}

This establishes the upper bound in \Cref{thm:asym}. To see the lower bound, partition the vertex set of a complete graph on $n|H|+r(\mathcal{D}(G),H)-2$ vertices into sets $A$ of size $n|H|-1$ and $C$ of size $r(\mathcal{D}(G),H)-1$. We colour all edges inside $A$ blue, all edges between $A$ and $C$ red and we take a colouring of $C$ containing neither a red copy of any member of $\mathcal{D}(G)$ nor a blue copy of $H$. To see this colouring does not contain a red copy of $G$ note that the portion of $G$ inside $A$ must be an independent set of $G$, since $A$ spans an independent set in the red graph, so the remainder of $G$ contains a member of $\mathcal{D}(G)$ which would need to be embedded inside $C$. In terms of blue copies of $H$, since $H$ is connected and there are no blue edges between $A$ and $C$, each blue copy of $H$ is completely contained either in $A$ or in $C$. We know there are no copies in $C$ so the most we can find is $n-1$ in $A$ due to size considerations. This completes the proof of \Cref{thm:asym}.

Let us also briefly discuss the case of disconnected $H$. The answer becomes $n|H|+r(\mathcal{D}(G),\mathcal{C}(H))-1$, where $\mathcal{C}(H)$ is the family of connected components of $H$. The optimal colouring is similar as above noting that we may take $C$ to not contain any blue connected component of $H$ which allows one to finish the argument as above. For the upper bound the argument proceeds in the same way with an additional step once we reach the $A_1,B_1,C_1$ partition. As long as we can find a connected component of $H$ inside $C_1$ in blue we can find the rest of $H$ in $A_1$ and put the blue copy of $H$ into $B_1$. Once again since $|C_1|\le 4^k$ this can not happen many times and when it stops the remaining vertices of $C_1$ do not have any blue components of $H$ so there are at most $r(\mathcal{D}(G),\mathcal{C}(H))-1$ vertices left in $C_1$ and we can finish in the same way.

\subsection{Symmetric problem}\label{sec:sym}
In this subsection we will prove \Cref{thm:main} concerned with determining $r(nH)$.
As mentioned in the introduction the answer here is less explicit. Let us begin with an easy lower bound construction, due to Burr, Erd\H{o}s and Spencer \cite{bes} which while not being tight is going to be useful, as it allows us to estimate the size of the underlying graphs we are going to be working with. 

\begin{prop}\label{prop:lb}
Let $H$ be a graph with no isolated vertices then for any $n$
$$r(nH) \ge (2|H|-\alpha(H))n-1.$$
\end{prop}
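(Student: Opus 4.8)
The plan is to prove the bound by exhibiting an explicit $2$-colouring of $K_N$ on $N=(2|H|-\alpha(H))n-2$ vertices that contains no monochromatic copy of $nH$, which immediately gives $r(nH)\ge N+1=(2|H|-\alpha(H))n-1$. Write $k=|H|$ and $\alpha=\alpha(H)$, and record first the basic fact that since $H$ has no isolated vertices it has at least one edge, so $1\le \alpha\le k-1$; in particular $k-\alpha\ge 1$, which is what makes the construction nondegenerate.

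First I would set up the colouring. Partition the vertex set of $K_N$ into two parts $V_1$ and $V_2$ with $|V_1|=kn-1$ and $|V_2|=(k-\alpha)n-1$; these are nonnegative and satisfy $|V_1|+|V_2|=(2k-\alpha)n-2=N$. Colour every edge inside $V_1$ blue and colour every remaining edge (those inside $V_2$ and those between $V_1$ and $V_2$) red.

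Then I would check the two monochromatic counts separately. For blue: since $H$ has no isolated vertices, in any blue copy of $H$ every vertex is incident to some edge of that copy, and all of those edges are blue hence lie inside $V_1$; therefore every blue copy of $H$ is entirely contained in $V_1$. As $|V_1|=kn-1<kn$, any family of vertex-disjoint blue copies of $H$ has fewer than $n$ members, so there is no blue $nH$. For red: $V_1$ induces no red edge, so a red copy of $H$ meets $V_1$ in an independent set of $H$ and thus uses at most $\alpha$ of its vertices in $V_1$ and at least $k-\alpha$ in $V_2$; since $|V_2|=(k-\alpha)n-1<(k-\alpha)n$, any family of vertex-disjoint red copies has fewer than $n$ members, so there is no red $nH$. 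Combining the two, the colouring contains no monochromatic $nH$, which is the desired conclusion.

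I do not expect any real obstacle here — the argument is a short explicit construction. The only thing that needs care is choosing the two part sizes so that each divisibility count lands exactly at $n-1$ (hence the $-1$ in each of $|V_1|$ and $|V_2|$ and the $-2$ in $N$), and making explicit that the ``no isolated vertices'' hypothesis is exactly what forces a blue copy of $H$ to live entirely in $V_1$ rather than merely forcing each of its connected components to do so.
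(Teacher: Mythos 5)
Your construction and analysis coincide with the paper's own proof: the same two-part colouring (blue clique of size $kn-1$, red elsewhere, with the second part of size $(k-\alpha)n-1$) and the same two counting arguments. The proof is correct and takes essentially the same approach as the paper.
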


\begin{proof}
Let $k=|H|$ and $\alpha=\alpha(H)$. Our goal is to give a $2$-colouring of a complete graph on $(2k-\alpha)n-2$ vertices without a monochromatic $nH$. Let us split its vertices into two sets $R$ and $B$ of sizes $(k-\alpha)n-1$ and $kn-1$ resp. We then colour all edges inside $R$ red, all edges inside $B$ blue and all edges in between red. Since $H$ has no isolated vertices any blue copy of $H$ must be completely inside $B$ which has size $n|H|-1$ so there is no blue $nH$. On the other hand every red copy of $H$ must use at least $k-\alpha$ vertices from $R$ as part of it we embed inside $B$ spans an independent set. Since $|R|=(k-\alpha)n-1$ there is no red $nH$, as desired.
\end{proof}

Since the part of the answer here that grows with $n$ is $(2|H|-\alpha(H))n$ just taking vertex disjoint copies of monochromatic $H$ as we did in the proof of \Cref{thm:asym-ub} does not lead us anywhere. Instead we will try to find a red copy of $H$ and a blue copy of $H$ which intersect in at least $\alpha(H)$ vertices. With this in mind an \emph{$H$-tie} is defined as a set of $2|H|-\alpha(H)$ vertices of a $2$-coloured complete graph which contains both a red copy of $H$ and a blue copy of $H$. This object plays a key role in understanding $r(nH)$ since as soon as we can find such an object if the rest of the graph contains a monochromatic $(n-1)H$ we can use the copy from our $H$-tie to extend it to a monochromatic $nH$. In particular, if $n$ is large enough to guarantee we can find an $H$-tie (provided there is no monochromatic $nH$) we know $r(nH) \le 2|H|-\alpha(H)+r((n-1)H)$. This in turn shows that $f(n)=r(nH)-(2|H|-\alpha(H))n$ becomes non-increasing when $n$ is large and is always at least $-1$ by \Cref{prop:lb}. Hence, $r(nH)=n(2|H|-\alpha(H))+c(H)$ when $n$ is large enough. It is not hard to show that one can find an $H$-tie even when $n$ is just exponential (provided there is no monochromatic $nH$), however determining when the long term behaviour begins, i.e.\ when $f(n)$ ultimately \emph{stops} decreasing and the final value $c(H)$ it settles in is a far more tricky proposition.

We will show that there exists a critical colouring, meaning that it avoids monochromatic $nH$ and has maximum number of vertices subject to this, with a lot structure. In particular, it will be very close to the colouring we used in the proof of the above lower bound, but with the additional ``exceptional'' small set. This will be the key step in the proof of \Cref{thm:main} since it reduces it to considering only these very special colourings. We note that we will assume $H$ has no isolated vertices for convenience and since one can easily deduce $r(nH)$ from $r(nH')$ where $H'$ is obtained from $H$ by removing all its isolated vertices.

\begin{thm}\label{thm:critical}
There exists $C>0$ such that the following holds for any $k$-vertex graph $H$ without isolated vertices. Provided $n \ge 2^{Ck}$, for some there exists a $2$-colouring of $K_{r(nH)-1}$, without a monochromatic $nH$ with the following structure: we can partition the vertex set into parts $R,B,E$ such that (see \Cref{fig:0.4} for illustration):
\begin{enumerate}
    \item All edges inside $R$ are red and all edges inside $B$ are blue.
    \item All edges between $R$ and $B$ are of the same colour, all edges between $R$ and $E$ are blue and all edges between $B$ and $E$ are red.
    \item\label{itm:3} There is no $H$-tie containing a vertex of $E$.
\end{enumerate}
\end{thm}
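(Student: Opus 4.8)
The plan is to take any critical colouring of $K_{r(nH)-1}$ (one with no monochromatic $nH$ on the maximum number of vertices) and massage it, via a sequence of local modifications and one application of the tiling machinery, into the highly structured colouring claimed. First I would extract the bulk structure: since there is no monochromatic $nH$, there is in particular no monochromatic copy of $K_k$ (as $H \subseteq K_k$ would give us copies to build $nH$ from, but more carefully one uses that $n$ is huge), so applying \Cref{lem:ramsey} to each colour class yields two disjoint sets $R_0$ and $B_0$, each of size roughly $r(nH)/\text{poly}$, where $R_0$ has enormous red minimum degree and $B_0$ has enormous blue minimum degree. By \Cref{lem:join} we find a $(k,k)$-join with red part in $R_0$, blue part in $B_0$ — this forces the ``majority'' colour between the red-dense part and the blue-dense part to be well-defined, and one argues (using that a monochromatic $nH$ is forbidden) that essentially \emph{all} of $R_0$ sends red edges to essentially all of $B_0$, i.e. after discarding a small exceptional set the colouring between the two bulk parts is monochromatic, say red. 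This gives the candidate $R$, $B$ and a first version of $E$.

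Next I would clean up $E$ and enforce items 1 and 2 exactly. The key recurring idea is: if any vertex $v$ violates the desired pattern (e.g. $v \in E$ but has too many red neighbours in $B$, or sends a wrong-coloured edge inside $R$ or $B$, or the $R$--$B$ colour is not uniform), then using the huge minimum-degree / density structure of $R$ and $B$ together with the tiling lemma \Cref{lem:tiling} applied to the appropriate colour class of the appropriate bulk set, we can build a monochromatic $nH$ — contradiction. Concretely, \Cref{lem:tiling} lets us perfectly or near-perfectly $K_k$-tile (hence $H$-tile, since $H \subseteq K_k$) a dense subset of $R$ in red and of $B$ in blue, and we have enough room ($|R| + |B| \ge (2k-\alpha)n - O(\text{small})$ by \Cref{prop:lb} and the assumed size of the clique) that a single ``$H$-tie'' — a red $H$ and a blue $H$ overlapping in $\alpha$ vertices — suffices to complete an $nH$. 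So any structural defect that produces an $H$-tie using only $R \cup B$ vertices is fatal; this simultaneously forces the internal monochromaticity of $R$ and $B$, the uniformity of the $R$--$B$ colour, the blue colour of $R$--$E$ edges, the red colour of $B$--$E$ edges, and — crucially — item \ref{itm:3}: no $H$-tie may contain a vertex of $E$, since combined with an $H$-tiling of the rest it would again give a monochromatic $nH$. Vertices of $E$ that fail to fit are either moved into $R$ or $B$ (after checking this keeps the colouring valid and the structure intact) or shown not to exist.

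The main obstacle I expect is the quantitative bookkeeping in the ``almost-everything is red between $R$ and $B$'' step and in bounding $|E|$: one must show the exceptional set is small enough (at most the $O(4^k)$-type slack afforded by \Cref{lem:tiling} and the Ramsey bound $r(\mathcal D(\cdot),\cdot) \le 4^k$) so that after all the cleaning we still have at least $(2k-\alpha)n - 1$ vertices, which is needed to know the colouring is genuinely critical and not merely large. This requires carefully chaining: (i) $n \ge 2^{Ck}$ with $C$ large enough that all the polynomial-in-$k$ and $4^{O(k)}$ losses from \Cref{lem:ramsey,lem:join,lem:tiling,lem:subset} are absorbed; (ii) the fact that a critical colouring has exactly $r(nH)-1 \ge (2k-\alpha)n - 2$ vertices by \Cref{prop:lb}, so the bulk $R \cup B$ cannot be too far from the extremal split $|R| \approx (k-\alpha)n$, $|B| \approx kn$; and (iii) arguing that once $R$ and $B$ are internally monochromatic with uniform red crossing edges, a vertex $v$ with, say, $r$ red and $b$ blue neighbours split between $R$ and $B$ can be classified so that it neither creates an $H$-tie nor blows up the exceptional set — the edge cases (vertices with a moderate mix of colours) being where one has to be most careful. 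Everything else is a fairly standard, if lengthy, run of the absorption-plus-stability template, with \Cref{lem:tiling} doing the heavy lifting that in other treatments would require the regularity lemma.
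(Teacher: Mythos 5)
Your outline follows the paper's opening moves (extract dense red and blue bulks via \Cref{lem:ramsey}, connect them with joins via \Cref{lem:join}, treat the leftover as an exceptional set), but the central mechanism you propose --- that ``any structural defect \ldots is fatal'' and therefore the internal monochromaticity of $R$ and $B$ and the uniformity of the $R$--$B$ colour are \emph{forced} in the original critical colouring --- does not work, and this is where the real content of the theorem lies. A stray blue edge inside $R$ does not produce a monochromatic $nH$, nor does an $H$-tie contained in $R\cup B$ yield any contradiction: in the actual proof $R_2\cup B_2$ is deliberately tiled by $(k-\alpha,k)$-joins, each of which \emph{contains} an $H$-tie, and the set $T$ consists of a large collection of further $H$-ties. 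A contradiction only arises when one can cover a $\tfrac{k}{2k-\alpha}$ fraction of the \emph{entire} vertex set with disjoint monochromatic copies of $H$ of a single colour; isolated wrong-coloured edges give nothing of the sort. (Relatedly, your claim that a critical colouring contains no monochromatic $K_k$ is false; what is used is that after deleting a maximal disjoint family of red copies of $H$ the large remainder is red-$H$-free.)

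Consequently the theorem cannot be proved as a structure theorem for the given critical colouring; it is an existence statement, and the paper proves it by \emph{recolouring}: after the clean-up steps one defines a new colouring $c'$ in which $R$ is painted entirely red, $B$ entirely blue, $R$--$B$ red, $E$--$R$ blue and $E$--$B$ red, and the burden of proof shifts to showing that $c'$ still has no monochromatic $nH$. That verification is the hardest part and is absent from your proposal: one must (i) transfer the few copies of $H$ meeting $E$ back to the original colouring via a reserved supply of disjoint red $(k,k)$-joins between $R_5$ and $B_5$; (ii) count that the remaining copies in $c'$ can use at most $\lfloor r/(k-\alpha)\rfloor$ red and $\lfloor b/k\rfloor$ blue slots; and (iii) realise exactly that many disjoint monochromatic copies in the \emph{original} colouring, which is where \Cref{lem:tiling} is really used --- to blue-$H$-tile $B_6$ and to red-$K_{k-\alpha}$-tile $R_6$ and extend each such clique by $\alpha$ red common neighbours in $B_6$. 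Your proposal invokes \Cref{lem:tiling} only as a source of contradictions, not for this transfer step, so the argument as written does not close.
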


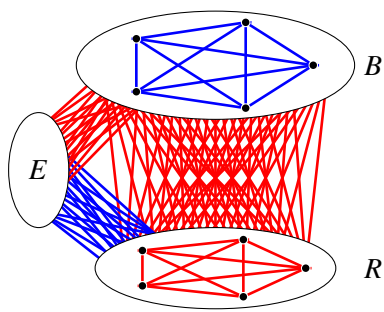
\begin{figure}[ht!]
\centering
\captionsetup{width=0.8\textwidth}
\begin{tikzpicture}[yscale=0.9,xscale=1, rotate=90]
\defPt{0}{0}{x0}
\defPt{3}{0}{x1}

\defPt{0}{-1.2}{y0}
\defPt{0}{-0.4}{y1}
\defPt{0}{0.4}{y2}
\defPt{0}{1.2}{y3}
\defPt{0}{-0.8}{y4}
\defPt{0}{-0}{y5}
\defPt{0}{0.8}{y6}

\defPt{3}{-1.5}{z0}
\defPt{3}{-0.5}{z1}
\defPt{3}{0.5}{z2}
\defPt{3}{1.5}{z3}
\defPt{3}{-1}{z4}
\defPt{3}{0}{z5}
\defPt{3}{1}{z6}

\defPt{0.85}{2.35}{a0}
\defPt{1.25}{2.35}{a1}
\defPt{1.65}{2.35}{a2}
\defPt{2.05}{2.35}{a3}

\defPt{-0.5}{0.8}{b0}
\defPt{-0.2}{0.8}{b1}
\defPt{0.2}{0.8}{b2}
\defPt{0.5}{0.8}{b3}

\defPt{2.5}{0.9}{c0}
\defPt{2.8}{0.9}{c1}
\defPt{3.2}{0.9}{c2}
\defPt{3.5}{0.9}{c3}

\foreach \i in {0,...,6}
    \foreach \j in {0,...,6}
        \draw[red, line width=1pt ] (y\i) -- (z\j);

\foreach \i in {0,...,3}
    \foreach \j in {0,...,3}
        \draw[blue, line width=1pt ] (a\i) -- (b\j);

\foreach \i in {0,...,3}
    \foreach \j in {0,...,3}
        \draw[red, line width=1pt ] (a\i) -- (c\j);

\fitellipsis{$(0.7,2.35)$}{$(2.2,2.35)$}{0.4};

\fitellipsis{$(x0)-(0,1.5)$}{$(x0)+(0,1.5)$}{0.6};
\pic[rotate=0, xscale=1.2, yscale=0.4,red] at ($(x0)$) {K5};

\fitellipsis{$(x1)-(0,1.75)$}{$(x1)+(0,1.75)$}{0.8};
\pic[rotate=0, xscale=1.3, yscale=0.6,blue] at ($(x1)$) {K5};

\node[] at ($(1.45,2.35)$) {$E$};

\node[] at ($(x0)+(0,-2.1)$) {$R$};
\node[] at ($(x1)+(0,-2.1)$) {$B$};

\end{tikzpicture}
\caption{The structure of a critical colouring. We do not know the colouring inside $E$ but it is strongly constrained by property \ref{itm:3}.}
\label{fig:0.4}
\end{figure}

Before turning to the proof let us note that \Cref{thm:critical} establishes the same structure of the colouring as introduced by Burr \cite{burr1}. That said, our proof does allow us to conclude some additional information on relative sizes of the sets involved, as will be discussed after the proof. 

Our strategy for proving \Cref{thm:critical} is going to be to start with an arbitrary critical colouring, gradually increase our understanding of its structure up to a point where we understand it well enough so that we can modify it into a colouring satisfying the properties in the statement but without creating a monochromatic $nH$. Let us now outline the steps of the proof, at each step we include a pointer to the claim in the main proof where it is done formally, in full detail.

\begin{itemize}
    \item We will find two large disjoint sets $R$ and $B$ which have very high minimum degree in red and blue resp.\ and which can be tiled with red $(k-\alpha,k)$-joins with red part in $R$ and blue in $B.$ See \Cref{claim:prop}.
    \item Outside of $R \cup B$ we set aside a maximal collection of $H$-ties and call its vertex set $T$ and denote the set of remaining vertices by $E$.
    \item We next show that $|E|$ must be small or we can find a monochromatic $nH$ (with the key observation being that we can tile the remainder of the graph with $H$-ties). See \Cref{claim:E}.
    \item We now move any $H$-tie in $R \cup B \cup E$ which contains a vertex of $E$ to $T$. With the key observation being that since $E$ is small this will not impact too badly the properties of $R$ and $B$ that we ensured.
    \item By moving an additional small set of $H$-ties to $T$ we can ensure all the edges between $R$ and $E$ are blue and all the edges between $B$ and $E$ are red. See \Cref{clm:4}. To achieve this we are using \Cref{claim:tie-finding} as a key tool which roughly speaking says that any vertex which does not send mostly blue edges to $R$ and red to $B$ can be made part of an $H$-tie and hence moved to $T$.
    \item Finally, we show that there are only few vertices in $R$ which have high blue degree towards $B,$ as otherwise we would be able to find a blue $nH.$ See \Cref{clm:5}. We then move all vertices with high blue degree to $T$ using the same $H$-tie moving strategy as before. 
\end{itemize}
At this point we are finally ready to introduce the new colouring satisfying the desired properties (see \Cref{clm:6}) and showing that if one can find a monochromatic $nH$ in the new colouring the same holds for the original (which is a contradiction).

\begin{proof}[ of \Cref{thm:critical}] 
Let $k=|H|$ and $\alpha=\alpha(H)$. Let us consider an arbitrary $2$-colouring of the complete graph $K$ on $N=r(nH)-1$ vertices not containing a monochromatic copy of $nH$. \Cref{prop:lb} implies that 
$N \ge n(2k-\alpha)-2\ge nk+n(k-\alpha)-2\ge nk+n-2$, where we used that $H$ is non-empty (since it has no isolated vertices) which implies $\alpha<k$. In particular, this implies that if we remove a maximal vertex disjoint collection of red $H$ the remainder has size at least $n$ and has no red copy of $H$. Remove some $n/2$ of the remaining vertices and apply \Cref{lem:ramsey} with $d=2^7$ to the blue graph to find a subset $B_0$ of size $m \ge \frac{n}{2^{7k}}$ which has minimum blue degree at least $(1-2^{-7})m$. Similarly taking a maximal collection of blue $H$ the remainder has at least $n$ vertices, so excluding the at most $n/2$ vertices already in $B_0$ we are left with at least $n/2$ of them which do not contain a blue $H$. So again \Cref{lem:ramsey} tells us we can find a subset $R_0$, disjoint from $B_0$, of size $m' \ge \frac{n}{2^{7k}}$ with minimum red degree at least $(1-2^{-7})m'$.

For certain technical reasons it is going to be helpful for us to ensure that both red and blue sets are of the same size. Let us w.l.o.g.\ assume that $m \le m'$, set $B_1=B_0$ and let $R_1$ be the subset of $R_0$ of size $m$ inducing a red subgraph of minimum degree at least $(1-2^{-6})m$. Such an $R_1$ exists by \Cref{lem:subset} applied with $d=(1-2^{-6})m$ and $\delta \ge (1-2^{-7})m'$ so that $r \ge 127/126$ and the requirement of the lemma $mm'< (127/126)^{\floor{m/128}}$ is clearly satisfied since $n/2 \ge m'\ge m \ge n/2^{7k}$. In order to preserve symmetry we will also relax our slightly stronger condition on minimum blue degree in $B_1$ to say that every vertex in $B_1$ has blue degree at least $(1-2^{-6})m$.

Now since $R_1$ still contains no blue $H$ and $B_1$ no red $H$, as well as $|B_1|,|R_1|=m \ge 2^{5k}$ we can use \Cref{lem:join} to find a $(k,k)$-join with red part in $R_1$ and blue part in $B_1$. We then take it out and repeat as long as we can. When we stop we have a collection of $(k,k)$-joins which cover the same number of vertices, namely at least $m-2^{5k}$ in each of $R_1$ and $B_1$. Now keep only the joins of the more common colour, say w.l.o.g. red. The number of vertices they cover in each of $R_1$ and $B_1$ is hence at least $m/2-2^{5k-1}$. Let $B_2$ be the set of vertices of these joins inside $B_1$ so $|B_2| \ge m/2-2^{5k-1}$. Finally, we will obtain $R_2$ by taking the union of $k-\alpha$ vertices taken uniformly at random from each join, from its red part inside $R_1$. 

In the following claim we collect all the properties of $R_2$ and $B_2$ that we will require, as well as explain more formally how $R_2$ is chosen.
\begin{claim}\label{claim:prop}
There exist disjoint sets of vertices $R_2$ and $B_2$ satisfying:
\begin{enumerate}[label=\alph*)]
    \item\label{prop:1} $|R_2|=(k-\alpha)m_0$ and $|B_2|=km_0$ for some $m_0 \ge n/2^{O(k)}.$
    \item\label{prop:2} $R_2$ contains no blue copy of $H$ and has minimum red degree at least $\frac{15}{16}|R_2|$. 
    \item\label{prop:3} $B_2$ contains no red copy of $H$ and has minimum blue degree at least $\frac{15}{16}|B_2|$
    \item\label{prop:4} We can tile $R_2 \cup B_2$ with red $(k-\alpha,k)$-joins each with red and blue parts inside $R_2$ and $B_2$ resp.
\end{enumerate}
\end{claim}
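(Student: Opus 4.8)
The plan is to read off all four properties directly from the construction carried out just before the claim; only property~\ref{prop:2}, the minimum red degree of $R_2$, makes essential use of the random choice of $R_2$, and the rest is bookkeeping. Let $m_0$ be the number of red $(k-\alpha,k)$-joins that were kept. By construction $B_2$ is the union of their blue parts, so $|B_2|=km_0$, while $R_2$ consists of $k-\alpha$ vertices chosen from each red part, so $|R_2|=(k-\alpha)m_0$; this gives the two equalities in~\ref{prop:1}. For the lower bound on $m_0$, note that the join-extraction process removes exactly $k$ vertices from each of the two current sides at every step, so the two sides always have equal size and the process can stop only once this common size drops below $2^{5k}$. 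Hence the extracted joins cover more than $m-2^{5k}$ vertices of each of $R_1$ and $B_1$, there are more than $(m-2^{5k})/k$ of them, and at least half carry the majority colour, so $m_0\ge(m-2^{5k})/(2k)$. Since $m\ge n/2^{7k}$, choosing the absolute constant $C$ large enough that $2^{5k}\le m/2$ gives $m_0\ge m/(4k)\ge n/2^{O(k)}$, finishing~\ref{prop:1}.

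Properties~\ref{prop:4} and~\ref{prop:3} are deterministic. For~\ref{prop:4}: for each kept red join $J$, its blue part lies in $B_2$ and spans a blue $K_k$, the $k-\alpha$ selected vertices of its red part lie in $R_2$ and span a red clique (being a subset of the red $K_k$ of $J$), and all edges between the two parts are red because $J$ is a red join; these $(2k-\alpha)$-vertex red $(k-\alpha,k)$-joins are vertex-disjoint and exhaust $R_2\cup B_2$. For~\ref{prop:3}: $B_2\subseteq B_1$ has no red $H$, and every $w\in B_2$ has blue degree at least $(1-2^{-6})m$ in $B_1$, hence at most $m/64$ blue non-neighbours inside $B_2$; since $|B_2|=km_0\ge(m-2^{5k})/2\ge m/4$ we get $m/64\le|B_2|/16$, so $w$ has blue degree at least $\frac{15}{16}|B_2|$ inside $B_2$. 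The analogous deterministic bound fails for $R_2$ once $\alpha$ is close to $k$, since then $|R_2|$ can be as small as $\Theta(m/k)$ while the defect $m/64$ is not; this is precisely why $R_2$ was chosen at random.

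For~\ref{prop:2}, first observe $R_2\subseteq R_1$ has no blue $H$. For the degree condition, fix a vertex $w$ lying in the red part of some kept red join $J$, and condition on the event $\{w\in R_2\}$, which depends only on the random choice made inside $J$. Within $J$, $w$ is red-adjacent to every other selected vertex; across a different kept red join $J'$, the number of $w$'s red non-neighbours selected from $J'$ is an independent random variable with values in $\{0,\dots,k\}$ and expectation $\frac{k-\alpha}{k}x_{J'}$, where $x_{J'}$ is the number of non-neighbours of $w$ in $J'\cap R_1$. Since $\sum_{J'}x_{J'}\le m/64$, the red non-degree of $w$ inside $R_2$ is a sum of independent $[0,k]$-valued variables of total expectation at most $\frac{k-\alpha}{k}\cdot\frac{m}{64}$, which, using $|R_2|=(k-\alpha)m_0\ge\frac{k-\alpha}{k}\cdot\frac{m-2^{5k}}{2}$ and the choice of $C$, is at most $\frac23\cdot\frac{|R_2|}{16}$. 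A Chernoff bound for sums of bounded independent variables then shows this non-degree exceeds $|R_2|/16$ with probability exponentially small in $|R_2|/k$, hence in $m/k^2$. A union bound over the at most $km_0\le m$ candidate vertices $w$, using $m\ge n/2^{7k}$ and $n\ge 2^{Ck}$ for $C$ a large absolute constant, keeps the total probability below $1$; fixing a good choice of $R_2$ yields~\ref{prop:2} and completes the claim.

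The only step with genuine content is~\ref{prop:2}: one must ensure that the threshold $|R_2|/16$ beats the expected non-degree $\frac{k-\alpha}{k}\cdot\frac{m}{64}$ by a constant factor — this is exactly where the $2^{5k}$ vertices lost in the join-extraction have to be negligible, which is what forces $C$ to be a sufficiently large absolute constant — and that the resulting exponentially small tail then dominates the union bound over $\Theta(m)$ vertices, which is automatic since $m$ is exponential in $k$ while the tail exponent is of order $m/k^2$. Everything else is a direct consequence of how $R_2$ and $B_2$ were built.
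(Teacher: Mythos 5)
Your proposal is correct and follows essentially the same route as the paper: the same deterministic bookkeeping for \ref{prop:1}, \ref{prop:3} and \ref{prop:4}, and for \ref{prop:2} the same random selection of $k-\alpha$ vertices per join followed by a Hoeffding/Azuma concentration bound and a union bound over the $\le km_0$ candidate vertices (the paper counts selected red neighbours rather than non-neighbours and proves the bound for all of $R'$ instead of conditioning on $w\in R_2$, but this is cosmetic). The only tiny slip is that the per-join count of selected non-neighbours takes values in $\{0,\dots,k-\alpha\}$ rather than $\{0,\dots,k\}$, which only helps the concentration estimate.
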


\begin{proof_claim}
We construct $B_2$ as above and will show that if $R_2$ is selected randomly as described above it satisfies the required red minimum degree condition of \ref{prop:2} with positive probability. 

Most of the properties are immediate from construction. For example \ref{prop:1} follows since we constructed $B_2$ as a disjoint union of $m_0$ sets of size $k$, where $m_0$ is the number of joins we had; $R_2$ was obtained as union of $m_0$ sets of size $k-\alpha$ and we ensured $km_0=|B_2| \ge m/2-2^{5k-1}$ so $m_0 \ge m/(2k)-2^{5k-1}/k \ge n/2^{O(k)}$. Part \ref{prop:4} follows since removing any $\alpha$ vertices from the red part of a red $(k,k)$-join creates a red $(k-\alpha,k)$-join. First parts of \ref{prop:2} and \ref{prop:3} follow since $R_2 \subseteq R_0$ and $B_2 \subseteq B_0$ and we constructed these sets to be blue and red $H$-free resp. For the second part of \ref{prop:3} note that since we removed exactly $m-|B_2|$ vertices from $B_1$ the minimum blue degree in $B_2$ is at least $$(1-2^{-6})m-(m-|B_2|)=|B_2|-m/2^6\ge \frac{31}{32}|B_2|-2^{5k-6} \ge \frac{15}{16}|B_2|$$
where we used $m \le 2|B_2|+2^{5k}$ and \ref{prop:1}. 
Finally, we need to verify the second part of \ref{prop:2}. Here, before we made our random choices we had the same property, namely the set $R'$ consisting of unions of the red parts of our $(k,k)$-joins had size $m_0$ and minimum red degree $\frac{31}{32}|R'|-2^{5k-6}\ge \frac{15.25}{16}|R'|$. Now our goal is to show that after making our random choices the probability  that for a fixed vertex we have chosen less than $\frac{15}{16}|R_2|$ of its neighbours is less than $\frac{1}{|R'|}=\frac{1}{km_0}$ since then a union bound ensures there exists an $R_2$ in which this happens for no vertex of $R'$. Fix a vertex $v$ in $R'$ and let $X_1,\ldots, X_{m_0}$ be the random variables counting the number of its red neighbours in each of the $m_0$ joins. Observe that $X_i$ are independent since our choices in different joins were independent, that $0\le X_i \le k-\alpha$ and that $\sum_{i=1}^{m_0} \E X_i \ge \frac{15.25}{16}|R'|\cdot \frac{k-\alpha}{k}=\frac{15.25}{16}(k-\alpha)m_0,$ 
since any vertex individually has probability exactly $\frac{k-\alpha}{k}$ of being selected. By Azuma--Hoeffding inequality \cite[Theorem 2]{azuma} we now have 
$$\P\left(\sum_{i=1}^{m_0} X_i \le \frac{15}{16}(k-\alpha)m_0\right)\le 2e^{-\frac{2\cdot 2^{-12}(k-\alpha)^2m_0^2}{m_0(k-\alpha)^2}}=2e^{-m_0/2^{11}}<\frac{1}{km_0},$$
as desired.
\end{proof_claim}

Armed with these $R_2$ and $B_2$ let $T$ be a maximal collection of vertex disjoint $H$-ties that we can find outside of $R_2 \cup B_2$ and denote the set of remaining vertices outside of $R_2 \cup B_2$ as $E_2$. Our next few steps will gradually ``clean-up'' the picture. Namely, we will identify a small (of size at most $2^{O(k)}$) problematic subset then repeatedly find disjoint $H$-ties within $R_2 \cup B_2 \cup E_2$ which together cover all the problematic vertices. Then we move these $H$-ties to $T$. In order to help keep track of the sizes, we note that at any point we will remove at most $2^{O(k)}$ vertices from $R_2$ or $B_2$ so the sets of vertices remaining there are still quite large since $|R_2|,|B_2|\ge m_0 \ge n/2^{O(k)}$ and in particular $n/2^{O(k)}-2^{O(k)}\ge 2^{O(k)}.$ Furthermore, we will also know that the minimum degree remains big, e.g. in the remainder of $R_2$ in red $\frac{15}{16}|R_2|-2^{O(k)}\ge \frac{7}{8}|R_2|.$ We will usually specify the exponent we need in the errors to make it easier to track where it comes from.

The following claim will be behind the fact that we will only ever need to remove at most $2^{O(k)}$ $H$-ties.
\begin{claim}\label{claim:E}
$|E_2|\le 2^{7k}$.
\end{claim}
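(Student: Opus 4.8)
Assume for contradiction that $|E_2|>2^{7k}$; the goal is to produce a monochromatic $nH$ in $K$. First I would observe that each of the $m_0$ red $(k-\alpha,k)$-joins tiling $R_2\cup B_2$ is itself an $H$-tie: fixing a maximum independent set $I$ of $H$, the graph $H-I$ (on $k-\alpha$ vertices) embeds into the red part and $I$ into the blue part, with the edges of $H$ between $I$ and $H-I$ available because the join is red, while the blue part alone already spans a blue $H$. Together with $T$ this gives $m_0+|T|$ pairwise disjoint $H$-ties, outside of which lies exactly $E_2$, so $N=(2k-\alpha)(m_0+|T|)+|E_2|$. By \Cref{prop:lb}, $N\ge(2k-\alpha)n-2$, hence $m_0+|T|\ge n-\frac{|E_2|+2}{2k-\alpha}$. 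Since from each $H$-tie one may extract a copy of $H$ in whichever colour one wishes, it suffices to find $\left\lceil\frac{|E_2|+2}{2k-\alpha}\right\rceil$ pairwise disjoint copies of $H$, all of one colour, spanned by vertices of $E_2$: combining these with a same-colour copy taken from each $H$-tie yields a monochromatic $nH$.

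Next I would dispose of the case where $E_2$ misses a monochromatic $K_k$. Say $E_2$ has no red $K_k$ (the other case is symmetric). Greedily delete a blue $K_k$ — one exists whenever at least $r(K_k)\le 4^k$ vertices remain, by the Erd\H{o}s--Szekeres bound together with the absence of red $K_k$'s — until fewer than $4^k$ vertices remain; this produces at least $\frac{|E_2|-4^k}{k}$ pairwise disjoint blue copies of $H$ inside $E_2$. A direct computation shows $\frac{|E_2|-4^k}{k}\ge\frac{|E_2|+2}{2k-\alpha}$ once $(k-\alpha)|E_2|\ge 2k+(2k-\alpha)4^k$, which holds since $|E_2|>2^{7k}$; so this case is done.

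Thus $E_2$ contains both a red $K_k$ and a blue $K_k$. I would then greedily extract a maximal family of $a\ge 1$ pairwise disjoint red $K_k$'s, spanning $C_R$, and (using that the remainder then has no red $K_k$) a maximal family of $b\ge 1$ pairwise disjoint blue $K_k$'s, spanning $C_B$, with $|C_R|+|C_B|\ge|E_2|-4^k$. The crux is the following dichotomy. If there is a red $K_k$ $D$ and a blue $K_k$ $E$ among these families such that some $\alpha$ vertices of $D$ have at least $k-\alpha$ common blue neighbours in $E$, then $E_2$ already contains an $H$-tie: place the maximum independent set $I$ of $H$ on those $\alpha$ vertices of $D$ and $H-I$ on $k-\alpha$ of their common blue neighbours in $E$ to obtain a blue $H$, and take a red $H$ on all of $D$ (a red $K_k$ contains $H$); these overlap exactly in $I$, so their union is an $H$-tie on $2k-\alpha$ vertices, contradicting the maximality of $T$ — and the role-reversed statement (some $\alpha$ vertices of $E$ with $k-\alpha$ common red neighbours in $D$) gives the same conclusion. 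Otherwise, neither configuration occurs for any such pair; a double-counting / K\H{o}v\'ari--S\'os--Tur\'an argument over the many disjoint cliques of each colour (this is where $|E_2|>2^{7k}$ is needed, both to have enough cliques and to keep slack in the inequalities) forces the cross-edges between a large subfamily $\mathcal R$ of the red cliques and a large subfamily $\mathcal B$ of the blue cliques to be almost monochromatic — say almost entirely red. In that regime one packs enough monochromatic copies of $H$ inside $E_2$: embedding $H-I$ into a red $K_k$ of $\mathcal R$ and $I$ into a blue $K_k$ of $\mathcal B$ across the red cross-edges yields a red $H$ on $k-\alpha$ red-clique vertices and $\alpha$ blue-clique vertices, while a blue $H$ sits inside a single blue clique; balancing these two placements against $|{\bigcup\mathcal R}|$ and $|{\bigcup\mathcal B}|$ shows that, whichever way the comparison $k|{\bigcup\mathcal R}|\gtrless(k-\alpha)|{\bigcup\mathcal B}|$ goes, one gets at least $\left\lceil\frac{|E_2|+2}{2k-\alpha}\right\rceil$ disjoint copies of a single colour, hence a monochromatic $nH$.

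I expect the last dichotomy to be the main obstacle. Turning "$E_2$ has no $H$-tie" into quantitative control of the cross-edge colouring between the extracted cliques, and then packing monochromatic copies efficiently enough, is delicate precisely because the gap between $2k-\alpha$ and $2k$ makes the naive count fall just short — the packing must genuinely exploit that a copy of $H$ can be placed partly inside a clique and partly across a monochromatic bipartite piece (splitting it as $I$ plus $H-I$). Everything else is routine counting.
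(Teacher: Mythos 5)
Your opening reduction is exactly the paper's: the red $(k-\alpha,k)$-joins tiling $R_2\cup B_2$ together with the ties in $T$ let one extract a $\frac{k}{2k-\alpha}$ fraction of $V(K)\setminus E_2$ in either colour, so it suffices either to pack roughly $\frac{|E_2|}{2k-\alpha}$ disjoint monochromatic copies of $H$ of one colour inside $E_2$, or to exhibit an $H$-tie inside $E_2$ (contradicting the maximality of $T$). Your degenerate case (one colour has no $K_k$ in $E_2$) is also fine. The gap is in your main case. The claimed dichotomy --- either some red clique $D$ and blue clique $E$ contain one of your two $K_{\alpha,k-\alpha}$-type configurations (hence an $H$-tie), or the cross-edges between the clique families are almost monochromatic --- is false. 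Take $\alpha=1$ and colour the edges between each red $K_k$ and each blue $K_k$ so that every vertex has exactly $\lfloor k/2\rfloor$ neighbours of each colour on the other side: no vertex of $D$ has $k-1$ blue neighbours in $E$ and no vertex of $E$ has $k-1$ red neighbours in $D$, so both configurations (and indeed any $H$-tie inside $D\cup E$) are avoided, yet the cross-edges are perfectly balanced. This is precisely the regime where neither colour's cliques alone yield enough copies, so the packing step you defer to a "K\H{o}v\'ari--S\'os--Tur\'an / balancing" argument is not routine --- it is the whole difficulty, and as set up it does not go through.

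The paper avoids clique extraction entirely. Inside $E_2$ take a maximal packing of blue copies of $H$; if it covers a $\frac{k}{2k-\alpha}$ fraction of $E_2$ you win, and otherwise the uncovered part is a set $R'$ of size at least $\frac{|E_2|}{2k}-k\ge 2\cdot 2^{5k}$ containing no blue $H$ (hence no blue $K_k$). Symmetrically one gets a large $B'$ with no red $K_k$, and after splitting to make $R'$ and $B'$ disjoint, \Cref{lem:join} produces a $(k,k)$-join with red part in $R'$ and blue part in $B'$; any $2k-\alpha$ vertices of it consisting of the whole clique of one colour and $k-\alpha$ vertices of the other form an $H$-tie inside $E_2$, the desired contradiction. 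The point is that \Cref{lem:join} (via its star-counting/Jensen argument) finds a genuinely complete monochromatic bipartite pattern between the two $H$-free sets, which is exactly the structure your cross-edge analysis fails to guarantee. To repair your proof you would need to replace the clique-pair dichotomy by an appeal to \Cref{lem:join} applied to the leftovers of maximal monochromatic $H$-packings (or reprove its counting argument in your setting).
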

\begin{proof_claim}
Towards a contradiction let us assume otherwise, so that $|E_2| > 2^{7k}$. Note first that if we can cover at least $\frac{k}{2k-\alpha}|E_2|$ vertices of $E_2$ with copies of $H$ of some fixed colour then we can find $n$ copies of $H$ in this colour. This follows since we can always cover a proportion of $\frac{k}{2k-\alpha}$ of both $R_2 \cup B_2$ and $T$ with $H$'s of either colour, since by \Cref{claim:prop} \ref{prop:4} we can cover $R_2 \cup B_2$ with $H$-ties and in any $H$-tie we can find a copy of $H$ of either colour which has $k$ vertices while the whole tie has $2k-\alpha$ vertices. This means that in the whole of $K$ we can find at least  $\frac{N}{2k-\alpha}\ge n-\frac{2}{2k-\alpha}>n-1$ disjoint copies of $H$ in our colour, where we used \Cref{prop:lb} which tells us $N \ge n(2k-\alpha)-2$, as claimed. This allows us to find a subset $R'$ of $E_2$ of size at least $\frac{k-\alpha}{2k-\alpha}|E_2|-k\ge \frac{|E_2|}{2k}-k \ge 2\cdot 2^{5k}$ which contains no blue copy of $H$. Similarly there is a subset $B'$ of $E_2$ of size at least $2 \cdot 2^{5k}$ which contains no red copy of $H$. By splitting vertices between $R'$ and $B'$ if necessary we can make $R'$ and $B'$ disjoint by losing at most another factor of $1/2$ in their size. This allows us to use \Cref{lem:join} to find a $(k,k)$-join which in particular contains an $H$-tie, this is a contradiction to how we constructed $E_2$.
\end{proof_claim}

Now for as long as we can find an $H$-tie in $R_2 \cup B_2 \cup E_2$ which uses at least $1$ vertex from $E_2$ we take it out and add it to $T$. We denote by $R_3,B_3$ and $E_3$ the sets of remaining vertices in $R_2,B_2$ and $E_2$ resp. Since $|E_2| \le 2^{7k}$ we know that $|R_3| \ge |R_2|-2k2^{7k}, |B_3| \ge |B_2|-2k2^{7k}$ and $|E_3|\le |E_2| \le 2^{7k}$. 

The following claim will allow us to impose quite a bit of structure on the colouring between $E_3$ and $R_3 \cup B_3$. 

\begin{claim}\label{claim:tie-finding}
Given a vertex $v$ which has $2^{5k}$ neighbours of colour $c_1$ in $R_3$ and $2^{5k}$ neighbours of colour $c_2$ in $B_3$, unless $c_1$ is blue and $c_2$ is red there exists an $H$-tie using $v$ and some of these neighbours.
\end{claim}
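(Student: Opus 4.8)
\emph{Proof plan.} Write $\alpha=\alpha(H)$, and let $A_1$ consist of $2^{5k}$ of the colour-$c_1$ neighbours of $v$ inside $R_3$ and $A_2$ consist of $2^{5k}$ of the colour-$c_2$ neighbours of $v$ inside $B_3$. These sets are disjoint, and since $R_3$ contains no blue copy of $H$ and $B_3$ contains no red copy of $H$ while $|H|=k$, the set $A_1$ contains no blue $K_k$ and $A_2$ contains no red $K_k$. The key step is to apply \Cref{lem:join} to the complete graph induced on $A_1\cup A_2$, with red part $A_1$ and blue part $A_2$. This produces a $(k,k)$-join $(P',Q')$ with $P'\subseteq A_1$ a red $K_k$, $Q'\subseteq A_2$ a blue $K_k$, and all edges between $P'$ and $Q'$ of one colour, say $c_3$. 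By the choice of $A_1,A_2$ the vertex $v$ is joined to all of $P'$ in colour $c_1$ and to all of $Q'$ in colour $c_2$, so the $2$-colouring of the $(2k+1)$-vertex set $\{v\}\cup P'\cup Q'$ is completely understood; it will suffice to find an $H$-tie through $v$ inside this set.

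Fix a maximum independent set $I_H$ of $H$, so $|I_H|=\alpha$ and $|V(H)\setminus I_H|=k-\alpha$. Suppose $O,P,Q$ are pairwise disjoint sets with $|O|=\alpha$ and $|P|=|Q|=k-\alpha$ such that $P$ spans a red clique, $Q$ spans a blue clique, all edges between $O$ and $P$ are red, and all edges between $O$ and $Q$ are blue. Mapping $I_H$ onto $O$ and $V(H)\setminus I_H$ onto $P$ (respectively onto $Q$) then embeds a red (respectively blue) copy of $H$, and these two copies meet precisely in $O$, so their union has $2k-\alpha$ vertices and is an $H$-tie. Hence it is enough to exhibit such $O,P,Q$ inside $\{v\}\cup P'\cup Q'$ with $v\in O\cup P\cup Q$.

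This is a short case analysis over $(c_1,c_2)$, which ranges over $\{(\text{red},\text{red}),(\text{red},\text{blue}),(\text{blue},\text{blue})\}$ since the excluded pattern is $(\text{blue},\text{red})$, and over $c_3\in\{\text{red},\text{blue}\}$. In each case one places the parts of $O,P,Q$ not equal to $v$ inside $P'$ and/or $Q'$ so that the only red edges required lie either inside the red clique $P'$ or across $P'$--$Q'$ when $c_3$ is red, and dually for blue, and one inserts $v$ into whichever of $P$, $Q$, $O$ its colours $c_1,c_2$ permit (into $P$ when $v$ must be red-adjacent to its $H$-neighbours there, and these lie in $P'$; into $Q$ in the blue-symmetric situation; into $O$ when $v$ is red to the part playing the role of $P$ and blue to the part playing the role of $Q$). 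For instance, when $c_1=c_2=c_3=\text{red}$: take $O$ to be any $\alpha$ vertices of $Q'$, take $Q=Q'\setminus O$, take $P''\subseteq P'$ of size $k-\alpha-1$ and set $P=\{v\}\cup P''$; then $P$ is a red clique (as $v$ is red to $P''\subseteq A_1$ and $P'$ is a red clique), $Q$ is a blue clique, all $O$--$P$ edges are red (as $v$ is red to $O\subseteq A_2$ and the $P'$--$Q'$ edges are red), and all $O$--$Q$ edges are blue (they lie in $Q'$), while $v\in P$. The remaining five cases are handled in the same fashion and all fit inside $P'$ and $Q'$ because $\alpha<k$.

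The only place one might worry about an obstruction is the edges between $R_3$ and $B_3$ (in particular those between $A_1$ and $A_2$): nothing established so far controls them, so one cannot simply stitch a red copy of $H$ near $R_3$ to a blue copy near $B_3$ through $v$. Invoking \Cref{lem:join} inside $A_1\cup A_2$ is precisely what sidesteps this, since it produces a single monochromatic bipartite piece $P'$--$Q'$ together with a red and a blue clique, which is all the cross information needed, and the whole construction then takes place within $\{v\}\cup P'\cup Q'$ without referring to any other edge between $R_3$ and $B_3$. After that, only the routine bookkeeping of the six colour cases remains.
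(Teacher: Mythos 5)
Your proof is correct and takes essentially the same route as the paper: both apply \Cref{lem:join} to the $c_1$-neighbours of $v$ in $R_3$ and the $c_2$-neighbours in $B_3$ (using that these sets contain no blue, resp.\ red, $K_k$) to extract a $(k,k)$-join, and then finish with a short case analysis on the colours to assemble an $H$-tie through $v$. The only cosmetic differences are your symmetric $O,P,Q$ template for the tie (with the red and blue copies of $H$ meeting exactly in the $\alpha$-set $O$) versus the paper's $(k-\alpha,k)$-join template, and your explicit enumeration of the six colour cases in place of the paper's w.l.o.g.\ reduction on the join colour.
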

\begin{proof_claim}
Let $R'$ be the set of $2^{5k}$ neighbours of $v$ in colour $c_1$ in $R_3$ and $B'$ the set of its neighbours in colour $c_2$ in $B_3$. By \Cref{lem:join} we can find a $(k,k)$-join with red part in $R'$ and blue part in $B'$. W.l.o.g. we may assume the join is red. Now if $c_2$ is blue we can take $v$, $k-1$ vertices from $B'$ and $k-\alpha$ vertices from $R'$ to get our $H$-tie, since it still contains a blue $K_k$ and since $\alpha\le k-1$ we can still find a red $(k-\alpha,\alpha)$ join, which contains a red $H$. In the remaining case both $c_1$ and $c_2$ are red. Here we can replace a vertex from the red part with $v$ and take $k-\alpha$ vertices from $B'$ and they still make a red $(k,k-\alpha)$-join which makes an $H$-tie.
\end{proof_claim}

The next claim gets us a step closer to our desired structure.

\begin{claim}\label{clm:4}
There are sets $R_4 \subseteq R_3$ and $B_4 \subseteq B_3$ such that $|R_4|\ge |R_2|-2^{14k}$ and $|B_4| \ge |B_2|-2^{14k}$; there are no red edges between $E_3$ and $R_4$ and no blue edges between $E_3$ and $B_4$ and $(R_3 \setminus R_4) \cup (B_3 \setminus B_4)$ can be covered by disjoint $H$-ties.
\end{claim}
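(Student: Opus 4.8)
The plan is first to show that $E_3$ can only be incident to very few ``wrongly coloured'' edges, and then to absorb the endpoints of those edges into $H$-ties one at a time. By construction $R_3\cup B_3\cup E_3$ contains no $H$-tie meeting $E_3$. I claim this forces every $v\in E_3$ to have fewer than $2^{5k}$ red neighbours in $R_3$ and fewer than $2^{5k}$ blue neighbours in $B_3$. Indeed, if $v$ had $2^{5k}$ red neighbours in $R_3$, then since $|B_3|\ge 2\cdot 2^{5k}$ it would also have $2^{5k}$ neighbours of some colour $c_2$ in $B_3$; as $(\text{red},c_2)\neq(\text{blue},\text{red})$, \Cref{claim:tie-finding} (applied with $c_1=\text{red}$) would produce an $H$-tie through $v$ inside $\{v\}\cup R_3\cup B_3$, contradicting the maximality in the definition of $E_3$. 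The second half is symmetric, taking $c_2=\text{blue}$ on the $B_3$ side. Since $|E_3|\le|E_2|\le 2^{7k}$ by \Cref{claim:E}, there are fewer than $2^{12k}$ red edges between $E_3$ and $R_3$ and fewer than $2^{12k}$ blue edges between $E_3$ and $B_3$; let $W_R\subseteq R_3$ and $W_B\subseteq B_3$ be the sets of endpoints of these edges lying in $R_3$ and $B_3$ respectively, so $|W_R|,|W_B|<2^{12k}$.

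Next I would remove the vertices of $W_R\cup W_B$ into disjoint $H$-ties greedily. Maintain sets $R^{\ast}\subseteq R_3$ and $B^{\ast}\subseteq B_3$ of vertices not yet placed into an $H$-tie, starting from $R^{\ast}=R_3$ and $B^{\ast}=B_3$. While some $w\in W_R$ still lies in $R^{\ast}$: by \Cref{claim:prop} the vertex $w\in R_2$ has red degree at least $\frac{15}{16}|R_2|$ inside $R_2$, and only $2^{O(k)}$ vertices have ever been deleted from $R_2$ (at most $2k\cdot 2^{7k}$ when passing from $R_2$ to $R_3$, plus at most $2k$ for each $H$-tie used, of which there are at most $|W_R|+|W_B|<2^{13k}$), so $w$ has at least $2^{5k}$ red neighbours inside the current $R^{\ast}$; likewise $|B^{\ast}|\ge 2\cdot 2^{5k}$, so $w$ has $2^{5k}$ neighbours of some colour in $B^{\ast}$. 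Since $R^{\ast}\subseteq R_2$ and $B^{\ast}\subseteq B_2$ still contain no blue, resp.\ no red, copy of $H$ (hence no blue, resp.\ no red, $K_k$), the proof of \Cref{claim:tie-finding} applies verbatim with $R^{\ast},B^{\ast}$ in place of $R_3,B_3$ and with $c_1=\text{red}$, yielding an $H$-tie on $\{w\}$ together with some vertices of $R^{\ast}$ and $B^{\ast}$; move this $H$-tie to $T$ and delete its vertices from $R^{\ast}$ and $B^{\ast}$. Symmetrically, while some $w\in W_B$ lies in $B^{\ast}$, run the same argument with $c_2=\text{blue}$. The procedure halts after at most $|W_R|+|W_B|$ steps.

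Finally, set $R_4:=R^{\ast}$ and $B_4:=B^{\ast}$ at termination. No vertex of $R_4\subseteq R_3$ sends a red edge to $E_3$ (such a vertex would lie in $W_R\cap R^{\ast}$, contrary to the halting condition) and, symmetrically, no vertex of $B_4$ sends a blue edge to $E_3$. Every $H$-tie removed during the clean-up was built from a vertex of $R_3\cup B_3$ together with some of its neighbours in $R_3$ and $B_3$, so it lies entirely inside $R_3\cup B_3$; hence $(R_3\setminus R_4)\cup(B_3\setminus B_4)$ is precisely the union of these disjoint $H$-ties. The number of vertices deleted from $R_2$ in passing to $R_4$ is at most $2k\cdot 2^{7k}+2k(|W_R|+|W_B|)\le 2^{14k}$ (for $k$ above some absolute threshold, the finitely many smaller values of $k$ being absorbed into $C$), and the same holds for $B_2$; since $|R_3|\le|R_2|$ and $|B_3|\le|B_2|$ this yields $|R_4|\ge|R_2|-2^{14k}$ and $|B_4|\ge|B_2|-2^{14k}$.

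The crux is the first paragraph: \Cref{claim:tie-finding} combined with the maximality defining $E_3$ confines $E_3$ to at most $2^{O(k)}$ badly coloured edges, which is exactly what makes the clean-up affordable; everything afterwards is bookkeeping, the one point needing care being that all error terms stay of order $2^{O(k)}$ and hence negligible against $|R_2|,|B_2|\ge n/2^{O(k)}$ once $n\ge 2^{Ck}$ with $C$ large.
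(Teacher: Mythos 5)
Your proof is correct and follows essentially the same route as the paper's: use \Cref{claim:tie-finding} together with the maximality of the tie-removal defining $E_3$ to bound the number of vertices of $R_3$ (resp.\ $B_3$) sending a red (resp.\ blue) edge to $E_3$ by $2^{5k}|E_3|\le 2^{12k}$, then greedily absorb each such vertex into an $H$-tie via \Cref{claim:tie-finding} again, with the same $2^{O(k)}$ degree bookkeeping. Your choice of colours for $W_R$ and $W_B$ is the right one (the paper's phrase ``vertices in $R_3$ which send a blue edge towards $E_3$'' is a typo for ``red'', as its later sentences confirm).
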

\begin{proof_claim}
The previous claim immediately implies that no vertex $v$ of $E_3$ can have red degree larger than $2^{5k}$ in $R_3$ or blue degree larger than $2^{5k}$ towards $B_3$, since otherwise taking whichever colour of its neighbours is most common in the other part we find an $H$-tie using $v$, all of which we previously removed. Therefore, there are at most $2^{5k}|E_3|\le 2^{12k}$ vertices in $R_3$ which send a blue edge towards $E_3$ and at most this many vertices in $B_3$ which send a red edge towards $E_3$. 

Let $v$ be one of these, at most $2^{12k}$ vertices, in $B_3$. We know its blue degree in $B_3$ is at least $\frac{15}{16}|B_2|-2k2^{7k}\ge 2k2^{12k}+2^{5k}$, where we used \Cref{claim:prop} \ref{prop:3}, the fact that $B_3$ is obtained from $B_2$ by removing up to $2k2^{7k}$ vertices and the last inequality follows from \Cref{claim:prop} \ref{prop:1}. Since $|R_3| \ge |R_2|-2k2^{7k} \ge 2k2^{12k}+ 2 \cdot 2^{5k}$ taking the majority colour neighbours of $v$ in $R_3$ and its blue neighbours in $B_3$ using \Cref{claim:tie-finding} we can find an $H$-tie inside $R_3 \cup B_3$ containing $v$. We move this tie to $T$ and repeat within the remaining sets until there are no more red edges between the remainder of $B_3$ and $E_3$. We only need to remove at most $2^{12k}$ vertices so at most this many ties get removed, since we had an excess of $2k2^{12k}$ in our degrees before the process, at any point during the process we have at least $2^{5k}$ left, as needed by \Cref{claim:tie-finding}. We can similarly remove any of the at most $2^{12k}$ vertices in $R_3$ which send a red edge towards $E_3$. We now take $R_4$ and $B_4$ to be the remaining sets of vertices inside $R_3$ and $B_3$ resp.

Since we removed at most $2^{12k}$ ties in both of our steps above and each has size at most $2k$ we have $|R_4| \ge |R_3|-2k\cdot 2\cdot 2^{12k}\ge |R_2|-2k \cdot 2^{7k}-4k 2^{12k}\ge |R_2|-2^{14k}$
 and similarly $|B_4|\ge |B_2|-2^{14k}$.
\end{proof_claim}

The claim provides us with new sets $R_4 \subseteq R_3$ and $B_4 \subseteq B_3$ and while our set $E_3$ did not change, for consistency let us set $E_4:=E_3$. Let $S$ denote the set of vertices in $R_4$ which have at least $k2^{15k}$ blue neighbours in $B_4$. We first show that if $S$ is big then we can find a blue $nH$. 

\begin{claim}\label{clm:5}
$|S| \le 2^{15k}$. 
\end{claim}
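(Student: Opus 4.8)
The plan is to assume, for contradiction, that $|S|>2^{15k}$ and to build $n$ pairwise disjoint blue copies of $H$. The point is that a ``baseline'' extraction — one blue copy of $H$ from each $H$-tie collected so far (write $T_4$ for the set of vertices lying in these ties, so that $N=|R_4|+|B_4|+|E_4|+|T_4|$, since the ties partition the complement of $R_4\cup B_4\cup E_4$), together with a near-perfect blue $K_k$-tiling of $B_4$ supplied by \Cref{lem:tiling} — already yields almost $n$ disjoint blue copies of $H$, and the large set $S$ provides exactly the missing handful.

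First I would pin down the baseline. Since $B_4\subseteq B_2$ contains no red $H$ it contains no red $K_k$; it has $|B_4|\ge|B_2|-2^{14k}=km_0-2^{14k}$ vertices (\Cref{claim:prop}, \Cref{clm:4}) and minimum blue degree at least $\tfrac{15}{16}|B_2|-2^{14k}\ge\tfrac78|B_4|$. Hence \Cref{lem:tiling}, applied to the blue graph on $B_4$, gives a blue $K_k$-tiling and thus at least $|B_4|/k-1$ disjoint blue copies of $H$ inside $B_4$; each of the $|T_4|/(2k-\alpha)$ ties in $T_4$ contributes one more blue copy of $H$. Substituting $|T_4|=N-|R_4|-|B_4|-|E_4|$, $N\ge n(2k-\alpha)-2$ (\Cref{prop:lb}), $|R_4|\le|R_2|=(k-\alpha)m_0$ and $|E_4|\le 2^{7k}$ into
$$\frac{|T_4|}{2k-\alpha}+\frac{|B_4|}{k}=\frac{N-|R_4|-|E_4|}{2k-\alpha}+|B_4|\cdot\frac{k-\alpha}{k(2k-\alpha)},$$
and using $|B_4|\cdot\tfrac{k-\alpha}{k}\ge(km_0-2^{14k})\tfrac{k-\alpha}{k}\ge(k-\alpha)m_0-2^{14k}\ge|R_4|-2^{14k}$, one finds the baseline count is at least $n-\tfrac{2^{14k+1}}{k}-1$.

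Now I would use $S$. Pick any $L:=2^{15k}$ vertices of $S$ and process them one at a time: when we reach a vertex $v$, at most $L(k-1)<k2^{15k}$ vertices of $B_4$ have been used, so $v$ retains at least $k2^{15k}-L(k-1)=2^{15k}\ge 4^k\ge r(K_k)$ unused blue neighbours in $B_4$; since $B_4$ has no red $K_k$, these contain a blue $K_{k-1}$, and $v$ together with it spans a blue $K_k\supseteq H$. This produces $L$ further disjoint blue copies of $H$, using $L$ vertices of $R_4$ and a set $U\subseteq B_4$ with $|U|=L(k-1)$. Then $B_4\setminus U$ still has no red $K_k$ and, provided $n\ge 2^{Ck}$ with $C$ large (so that $|B_4\setminus U|\ge km_0-2^{16k+1}\ge n/2^{O(k)}-2^{O(k)}$ is large), has minimum blue degree at least $\tfrac{15}{16}|B_2|-2^{14k}-L(k-1)\ge\tfrac78|B_4\setminus U|$, so \Cref{lem:tiling} gives at least $\tfrac{|B_4|-L(k-1)}{k}-1$ more disjoint blue copies of $H$. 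All copies obtained — from $S$, from $B_4\setminus U$, from $T_4$ — are pairwise disjoint, and they number at least
$$L+\frac{|B_4|-L(k-1)}{k}-1+\frac{|T_4|}{2k-\alpha}=\frac{L}{k}+\frac{|B_4|}{k}+\frac{|T_4|}{2k-\alpha}-1\ge n+\frac{2^{15k}-2^{14k+1}}{k}-1>n,$$
giving a blue $nH$ — a contradiction. Hence $|S|\le 2^{15k}$.

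There is no conceptual difficulty here; the work is bookkeeping. One must check that every extracted blue copy of $H$ is genuinely vertex-disjoint from the others, that the minimum-degree hypothesis of \Cref{lem:tiling} survives deleting the (exponentially many, yet negligibly small) sets moved into ties and into $U$, and — the single inequality that actually has to go the right way — that the single-exponential surplus $L/k$ beats the single-exponential shortfall $2^{14k+1}/k$ of the baseline. It does, with room to spare, precisely because the threshold $k2^{15k}$ in the definition of $S$ exceeds $k2^{14k}$ by a factor of $2^k$; this is also the reason we take $L=2^{15k}$ rather than all of $S$ (which is in general far too large to delete from $B_4$).
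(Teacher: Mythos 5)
Your proposal is correct and follows essentially the same route as the paper: greedily extract $2^{15k}$ disjoint blue copies of $H$, each using one vertex of $S$ and $k-1$ of its blue neighbours in $B_4$, tile the rest of $B_4$ in blue via \Cref{lem:tiling}, and take one blue $H$ from each tie in $T$. The only difference is cosmetic — the paper verifies the final count by showing the covered vertices form a $\tfrac{k}{2k-\alpha}$ proportion of $N$, while you count copies directly via $|T_4|=N-|R_4|-|B_4|-|E_4|$ — and both computations reduce to the same comparison of the $2^{15k}$ surplus against the $2^{14k}$-scale losses.
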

\begin{proof_claim}
Towards a contradiction assume $|S| > 2^{15k}$. We will show that we can find at least $2^{15k}$ vertex disjoint blue copies of $H$ using exactly one vertex from $S$ and having remaining vertices in $B_4$. We will find these copies one by one, at any point we have used up at most $(k-1)2^{15k}$ vertices from $B_4$. So, any vertex $v\in S$ still has at least $2^{15k}$ available blue neighbours inside $B_4$ and since $B_4$ has no red copy of $H$ so by Ramsey's theorem there is a blue copy of $H$. Replacing some vertex in this copy with $v$ we find our new copy of $H$ and can continue until we find our collection of $2^{15k}$ blue copies of $H$. The remainder of $B_4$ has minimum blue degree at least $\frac{15}{16}|B_2|-2^{14k}-(k-1)2^{15k} \ge \frac78 |B_2|$ so by \Cref{lem:tiling} we can tile the remainder with blue copies of $H$. In particular, taking into account that the tiling might not be perfect, we have covered at least $|B_4|-(k-1)+2^{15k}\ge |B_4|+2^{14k}+2^{7k}\ge |B_2|+|E_4|$ vertices of $R_4 \cup B_4 \cup E_4$ with blue copies of $H$, where in the second inequality we used \Cref{clm:4,claim:E}. Noting that $|B_2|=km_0$ makes a proportion of at least $\frac{k}{2k-\alpha}$ out of $R_4 \cup B_4 \subseteq R_2 \cup B_2$ we know that $|B_2|+|E_4|$ makes at least $\frac{k}{2k-\alpha}$ proportion of $R_4 \cup B_4 \cup E_4$. Since, as usual we can cover the same proportion of $T$ with blue copies of $H$, this means that as before we get at least $\frac{N}{2k-\alpha}>n-1$ vertex disjoint blue copies of $H$, which is a contradiction.
\end{proof_claim}

Now in a similar manner as before, since we know each member of $S$ has at least $2k\cdot2^{15k}+2^{5k}$ red neighbours in $R_4$ we can move at most $2^{15k}$ vertex disjoint $H$-ties to $T$ which together contained all vertices of $S$. Let $R_5$ and $B_5$ be the sets of remaining vertices of $R_4$ and $B_4$ resp. and again we set $E_5:=E_4$. We have $|R_5|\ge |R_2|-2^{14k}-2k\cdot2^{15k} \ge |R_2|-2^{17k}$ and $|B_5|\ge |B_2|-2^{17k}$ and the new bit of structure we obtained is that every vertex of $R_5$ has at most $2^{16k}$ blue neighbours in $B_5$ so that the vast majority of edges between $R_5$ and $B_5$ is red. See \Cref{fig:0.5} for illustration.

\begin{figure}[ht!]
\centering
\captionsetup{width=0.8\textwidth}
\begin{tikzpicture}[scale=0.5,rotate=90]

\defPt{0}{0}{x0}
\defPt{3}{0}{x1}

\defPt{0}{-1.2}{y0}
\defPt{0}{-0.4}{y1}
\defPt{0}{0.4}{y2}
\defPt{0}{1.2}{y3}
\defPt{0}{-0.8}{y4}
\defPt{0}{-0}{y5}
\defPt{0}{0.8}{y6}

\defPt{3}{-1.5}{z0}
\defPt{3}{-0.5}{z1}
\defPt{3}{0.5}{z2}
\defPt{3}{1.5}{z3}
\defPt{3}{-1}{z4}
\defPt{3}{0}{z5}
\defPt{3}{1}{z6}

\defPt{0.85}{2.35}{a0}
\defPt{1.25}{2.35}{a1}
\defPt{1.65}{2.35}{a2}
\defPt{2.05}{2.35}{a3}

\defPt{-0.5}{0.8}{b0}
\defPt{-0.2}{0.8}{b1}
\defPt{0.2}{0.8}{b2}
\defPt{0.5}{0.8}{b3}

\defPt{2.5}{0.9}{c0}
\defPt{2.8}{0.9}{c1}
\defPt{3.2}{0.9}{c2}
\defPt{3.5}{0.9}{c3}

\defPt{0.6}{-15}{g0}
\defPt{0.6}{-9}{g1}
\defPt{0.6}{-5}{g2}

\foreach \i in {0,...,6}
    \foreach \j in {0,...,6}
        \draw[red, dotted, line width=1pt ] (y\i) -- (z\j);

\foreach \i in {0,...,3}
    \foreach \j in {0,...,3}
        \draw[blue, line width=1pt ] (a\i) -- (b\j);

\foreach \i in {0,...,3}
    \foreach \j in {0,...,3}
        \draw[red, line width=1pt ] (a\i) -- (c\j);

\fitellipsis{$(0.7,2.35)$}{$(2.2,2.35)$}{0.4};

\fitellipsis{$(x0)-(0,1.5)$}{$(x0)+(0,1.5)$}{0.6};
\pic[rotate=0, xscale=0.6, yscale=0.25,red,dashed] at ($(x0)$) {K5};

\fitellipsis{$(x1)-(0,1.75)$}{$(x1)+(0,1.75)$}{0.8};
\pic[rotate=0, xscale=0.65, yscale=0.35,blue,dashed] at ($(x1)$) {K5};

\node[] at ($(1.45,2.35)$) {\footnotesize $E_5$};

\node[] at ($(x0)+(-1.5,0)$) {$R_5$};
\node[] at ($(x0)+(-1.5,-3)$) {$R$};
\node[] at ($(x1)+(1.5,0)$) {$B_5$};
\node[] at ($(x1)+(1.5,-3)$) {$B$};
\node[] at ($(g0)+(1,-2)$) {$T$};

\node[] at (2.3,-12) {$\cdots$};
\node[] at (0.5,-12) {$\cdots$};
\foreach \i in {0,...,2}
\pic[scale=0.5,rotate=90] at ($(g\i)$) {K55};

\fitellipsisnfill{$(g0)-(-1,2.8)$}{$(g2)+(1,2)$}{2.8};
\fitellipsisbfill{$(g0)-(-1.8,2)$}{$(g2)+(2.5,7.8)$}{1.1};
\fitellipsisrfill{$(g0)-(0,2)$}{$(g2)+(-0.7,7.8)$}{1.1};

\end{tikzpicture}
\caption{Structure of our original colouring. Tiny exceptional set $E_5=E$ is joined completely in red/blue to $B_5/R_5$. $R_5/B_5$ are almost completely red/blue. Edges in between them are mostly red. $T$ is a union of $H$-ties. $R=R_5$ union $k-\alpha$ vertices from each tie in $T$. $B=B_5$ union $k$ vertices from each tie in $T$. }
\label{fig:0.5}
\end{figure}
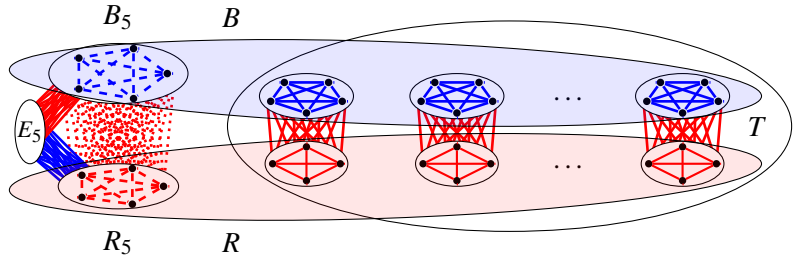

We now obtained enough structure to implement our strategy. Let us define a new colouring $c'$ of our complete graph. We will introduce a partition of the vertex set $R,B$ and $E$, where $E=E_5$ is the exceptional set we obtained above and we take $R$ to be the union of $R_5$ and $k-\alpha$ vertices from each $H$-tie in $T$ while we take $B$ to be a union of $B_5$ and the remaining $k$ vertices from each $H$-tie in $T$. In $c'$ we keep the same colouring of $E=E_5$ as in our original colouring; we colour $R$ completely red; $B$ completely blue and all edges in-between $R$ and $B$ red; in-between $E$ and $R$ blue and in-between $E$ and $B$ red. This means that the colouring has the desired form and it remains to check that there is no $H$-tie using any vertices of $E$ in $c'$ and that if it contains a monochromatic copy of $nH$ then so did our original colouring. To avoid confusion we note that $R,B$ and $E$ make a partition defining our new colouring while $R_5,B_5,E_5$ and $T$ make a partition with a lot of structure in our old colouring.

\begin{claim}\label{clm:6}
There is no $H$-tie in $c'$ which uses at least one vertex of $E$.
\end{claim}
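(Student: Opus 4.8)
The plan is to argue by contradiction: from a hypothetical $H$-tie $W$ in $c'$ containing a vertex $v\in E$, I will manufacture a genuine $H$-tie of the \emph{original} colouring that lies inside $R_3\cup B_3\cup E_3$ and still contains $v\in E_3=E$. This contradicts the fact that the collection of $H$-ties removed while forming $R_3,B_3,E_3$ was taken maximal, i.e.\ that $R_3\cup B_3\cup E_3$ contains no $H$-tie using a vertex of $E_3$.

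First I would record the structural facts that make this work. Inside $E$, and on the edges between $E$ and $R_5$ and between $E$ and $B_5$, the colouring $c'$ coincides with the original colouring: $c'$ keeps the original colouring on $E$, and by \Cref{clm:4} all edges from $E_3$ to $R_4\supseteq R_5$ are (originally) blue while all edges from $E_3$ to $B_4\supseteq B_5$ are (originally) red, which is exactly what $c'$ prescribes there. Moreover, in the original colouring the sets $R_5,B_5$ are enormous (at least $n/2^{O(k)}$), $R_5$ contains no blue $K_k$ (it lies inside $R_0$), $B_5$ contains no red $K_k$, and, crucially, by \Cref{clm:5} and the cleanup following it, every vertex of $R_5$ has at most $2^{16k}$ blue neighbours inside $B_5$.

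Now let $H_r\subseteq W$ be the red copy and $H_b\subseteq W$ the blue copy of $H$ in the $H$-tie $W$, and write $W_R=W\cap R$, $W_B=W\cap B$, $W_E=W\cap E$, so $|W_R|,|W_B|<2k$ and $v\in W_E$. I would re-embed the parts of $W$ outside $E$ into $R_5$ and $B_5$ from scratch: by the Erd\H{o}s--Szekeres bound pick a red clique $X_R\subseteq R_5\setminus W$ with $|X_R|=|W_R|$; then at most $|X_R|\cdot 2^{16k}$ vertices of $B_5$ send a blue edge to $X_R$, so a large subset $B^{*}\subseteq B_5\setminus W$ is joined entirely in red to $X_R$, and since $B^{*}$ has no red $K_k$ it contains a blue clique $X_B$ with $|X_B|=|W_B|$ (all the size requirements hold once $C$ is large enough). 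Let $\phi\colon W\to W':=X_R\cup X_B\cup W_E$ be the identity on $W_E$ together with arbitrary bijections $W_R\to X_R$ and $W_B\to X_B$; since the three target sets are pairwise disjoint, $\phi$ is a bijection and $|W'|=|W|=2k-\alpha$. The point is that in $c'$ every edge of $H_r$ joins two vertices within $W_R$, within $W_E$, between $W_R$ and $W_B$, or between $W_E$ and $W_B$ (there are no red $R$--$E$ edges in $c'$, and $W_B$ is red-independent), and each such pair is sent by $\phi$ to a pair that is red in the original colouring: $X_R$ is a red clique, the two colourings agree on $E$, $X_R$--$X_B$ is red by the choice of $B^{*}$, and $E$--$B_5$ is red. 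Hence $\phi(H_r)$ is a red copy of $H$. Symmetrically, every edge of $H_b$ lies within $W_B$, within $W_E$, or between $W_R$ and $W_E$, and each of these is sent by $\phi$ to a blue pair of the original colouring ($X_B$ a blue clique, agreement on $E$, and $E$--$R_5$ blue), so $\phi(H_b)$ is a blue copy of $H$. Thus $W'$ is an $H$-tie of the original colouring with $W'\subseteq R_5\cup B_5\cup E\subseteq R_3\cup B_3\cup E_3$ and $v\in W'\cap E_3$, the desired contradiction.

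The only genuinely delicate point is the re-embedding. One is tempted to relocate only the vertices of $W$ that live in the ties of $T$ (those in $R\setminus R_5$ or $B\setminus B_5$) while keeping its $R_5$- and $B_5$-vertices, but the latter need not span monochromatic cliques in the original colouring; re-embedding the entire $R$-part and $B$-part sidesteps this. It is here, and only here, that \Cref{clm:5} is used: because almost all $R_5$--$B_5$ edges are red we can insist on a \emph{red} join between $X_R$ and $X_B$, which is precisely what is needed to recolour $H_r$ correctly. Everything else is routine bookkeeping of edge types and size estimates.
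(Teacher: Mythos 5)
Your proposal is correct and follows essentially the same route as the paper: the paper also replaces the $R$- and $B$-parts of the hypothetical tie by a red $(2k,2k)$-join found in the original colouring (a red $K_{2k}$ in $R_5$ plus a blue $K_{2k}$ in its red neighbourhood in $B_5$, using the $2^{16k}$ bound on blue degrees), keeps the $E$-part fixed, and derives a contradiction with the maximality of the tie-removal step defining $E_3$. The only cosmetic difference is that you take cliques of the exact sizes $|W_R|,|W_B|$ rather than a full $(2k,2k)$-join, which changes nothing.
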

\begin{proof_claim}
Suppose towards a contradiction that there exists  an $H$-tie in $c'$ using some vertices of $E.$ Since it consists of at most $2k$ vertices and all vertices of $R$ are the same and all vertices of blue are the same our $H$-tie needs to be a subset of $E$ and a red $(2k,2k)$-join with red part in $R$ and blue part in $B$. So provided we can find in our original colouring a red $(2k,2k)$-join with red part in $R_5$ and blue part in $B_5$ this would imply that we could find an $H$-tie using the same subset of vertices from $E$ in our original colouring as well (since we do know the colour of edges between $E$ and both $R_5$ and $B_5$), this is a contradiction. To see such a join exists note that we can find a red $K_{2k}$ in $R_5$ since it is blue $H$-free (by \Cref{claim:prop} since $R_5 \subseteq R_2$) and has size at least $2^{3k}$.
Since there are at most $2k\cdot 2^{16k}$ blue neighbours of some vertex in this clique the remaining $|B_5|-2k\cdot 2^{16k}\ge 2^{3k}$ vertices of $B_5$ are joined completely in red to our red $K_{2k}$. Since they are red $H$-free (by \Cref{claim:prop} since $B_5 \subseteq B_2$) we can find a blue $K_{2k}$ which completes our desired join.
\end{proof_claim}

Let us now turn to showing there is no monochromatic $nH$ in $c'$. Let us assume, towards a contradiction, that there is a monochromatic $nH$ in $c'$. Observe that among the $n$ copies of $H$ making this $nH$ there are at most $|E|\le 2^{7k}$ (by \Cref{claim:E} and $E \subseteq E_2$) copies which intersect $E$. We claim that we can find the same number of copies of $H$ in our original colouring, which in addition use the same number of vertices in both $R_5$ and $B_5$ as they do in $R$ and $B$ in the new colouring. This will be immediate from the following claim.

\begin{claim}
We can find $2^{7k}$ disjoint red $(k,k)$-joins with red part in $R_5$ and blue part inside $B_5$.
\end{claim}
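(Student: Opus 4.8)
The plan is a direct greedy construction, extracting the $2^{7k}$ joins one at a time. The first observation to record is that, since $|H|=k$, a monochromatic $K_k$ automatically contains a monochromatic copy of $H$; combined with $R_5\subseteq R_2$ and $B_5\subseteq B_2$ and \Cref{claim:prop}, this says that $R_5$ contains no blue $K_k$ and $B_5$ contains no red $K_k$. Hence, by the Erd\H{o}s--Szekeres bound $r(K_k)\le 4^k$, \emph{any} subset of $R_5$ of size at least $4^k$ spans a red $K_k$, and any subset of $B_5$ of size at least $4^k$ spans a blue $K_k$.

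Now suppose we have already found $i<2^{7k}$ pairwise disjoint red $(k,k)$-joins with red parts in $R_5$ and blue parts in $B_5$; together they use at most $k2^{7k}$ vertices from each of $R_5$ and $B_5$. Using $|R_5|\ge |R_2|-2^{17k}$, $|B_5|\ge|B_2|-2^{17k}$ with $|R_2|,|B_2|\ge m_0\ge n/2^{O(k)}$ and the hypothesis $n\ge 2^{Ck}$ for a sufficiently large absolute constant $C$, the sets $R_5'$ and $B_5'$ of not-yet-used vertices each still have size at least (say) $2^{18k}$. Pick a red $K_k$, call it $K$, inside $R_5'$. By the structural property of $R_5$ and $B_5$ obtained in the paragraph following \Cref{clm:5}, every vertex of $R_5$ — in particular every vertex of $K$ — has at most $2^{16k}$ blue neighbours in $B_5$, hence at most $2^{16k}$ in $B_5'$; so all but at most $k2^{16k}$ vertices of $B_5'$ are joined to $K$ entirely in red. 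This leaves a subset $B''\subseteq B_5'$ with $|B''|\ge 2^{18k}-k2^{16k}\ge 4^k$ that is red-complete to $K$ and (being contained in $B_5$) has no red $K_k$, so $B''$ contains a blue $K_k$, call it $K'$. Then $K\cup K'$, with all crossing edges red, is a red $(k,k)$-join with red part in $R_5$ and blue part in $B_5$, disjoint from all the previous ones. Removing $K\cup K'$ and iterating $2^{7k}$ times yields the desired collection.

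I do not expect a genuine obstacle here; the only point requiring care is the bookkeeping of constants. One must choose $C$ large enough that, after the earlier losses ($|R_5|\ge|R_2|-2^{17k}$ and $|B_5|\ge|B_2|-2^{17k}$) and after removing the at most $k2^{7k}$ vertices consumed over all iterations, the remaining sets still comfortably exceed $k2^{16k}+4^k$ — this is what guarantees both that a red $K_k$ can always be found in the leftover of $R_5$ and that a blue $K_k$ can always be found inside its red neighbourhood in the leftover of $B_5$. This is the same style of estimate used throughout the section, and the key structural input is precisely the fact, established right after \Cref{clm:5}, that the edges between $R_5$ and $B_5$ are overwhelmingly red.
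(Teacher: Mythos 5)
Your proof is correct and follows essentially the same greedy argument as the paper: extract the joins one at a time, find a red $K_k$ in the unused part of $R_5$ using that it is blue-$H$-free (hence blue-$K_k$-free), and then use the bound of $2^{16k}$ on blue degrees from $R_5$ to $B_5$ to locate a large red common neighbourhood in $B_5$ in which a blue $K_k$ must sit. The only difference is cosmetic bookkeeping of the constants.
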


\begin{proof_claim}
We find our joins one by one, at any point we have used up at most $k2^{7k}$ vertices so among the remaining $|R_5|-k2^{7k} \ge 2^{2k}$ vertices, since $R_5$ is blue $H$-free we can find a red $K_k$. The $k$ vertices of this clique send at most $k2^{16k}$ blue edges towards $B_5$, so they have at least $|B_5|-k2^{16k}-k2^{7k}\ge 2^{2k}$ red common neighbours among still available vertices of $B_5$, among which we can find a blue $K_k$, once again since $B_5$ is red $H$-free.
\end{proof_claim}

Now assign for each of our $c'$-monochromatic copies of $H$ which intersect $E$ one of the $(k,k)$-joins we found above in our original colouring. Looking at any copy of $H$ note that on the union of $H \cap E$ and vertices of its assigned join both our original and new colouring are exactly the same, so we can find the same (in terms of sizes in each part) copy of $H$ in our old colouring, which we refer to as the corresponding copy. 

Let us now remove all copies of $H$ intersecting $E$ in our new colouring and all our corresponding copies of $H$ in our old colouring. In the old colouring let $R_6$ and $B_6$ be the sets of remaining vertices from $R_5$ and $B_5$ and let $t$ be the number of $H$-ties making $T$ at this point. Note that $|B_6| \ge |B_5|-2k\cdot2^{7k}\ge |B_2|-2^{17k}-2k\cdot2^{7k} \ge |B_2|-2^{18k}$ and similarly $|R_6| \ge |R_2|-2^{18k}$. Now, all the remaining copies of $H$ from our monochromatic $nH$ in $c'$ belong to $R \cup B$ and in particular taking into account the number of vertices used in the removed copies they have only $r=|R_6|+t(k-\alpha)$ vertices of $R$ and $b=|B_6|+tk$ vertices of $B$ at their disposal. We know that on these vertices we can find at most $\floor{\frac{r}{k-\alpha}}$ red copies of $H$ since any red copy of $H$ in $R \cup B$ must use at least $k-\alpha$ vertices of $R$ since its part in $B$ is an independent set. Since $H$ has no isolated vertices any blue copy of $H$ in $R \cup B$ must be entirely inside $B$ so we can find at most $\floor{\frac{b}{k}}$ disjoint blue copies of $H$.

If we can show that we can find this many red and blue disjoint copies of $H$ in $R_6 \cup B_6 \cup T$ in our original colouring we complete the proof since this would imply there was a monochromatic copy of $nH$ in our original colouring as well. First observe that we can find exactly $t$ copies in either colour in $T$ so the task reduces to finding $\floor{\frac{|R_6|}{k-\alpha}}$ disjoint red copies and $\floor{\frac{|B_6|}{k}}$ disjoint blue copies of $H$ in $R_6 \cup B_6$. Note that $B_6$ induces a blue graph with minimum degree at least $\frac78 |B_6|$, this is since we obtained it from $B_2$ by deleting (several times) at most an exponential number of vertices, minimum degree in $B_2$ was $\frac{15}{16}|B_2|$ and $|B_2|/16 \ge n/2^{O(k)}$ (using \Cref{claim:prop}) is bigger than any exponential number of deleted vertices. \Cref{lem:tiling} tells us we can tile it with blue $H$ which gives precisely the desired number of copies. In the red case similarly $R_6$ still induces a red graph with minimum degree at least $\frac78|R_6|$ and we can tile it with red copies of $K_{k-\alpha}$ and our remaining task is to show we can find disjoint sets of $\alpha$ red common neighbours in $B_6$ for each of our red $K_{k-\alpha}$. This is easily achieved since there are at most $(k-\alpha)k2^{15k}\le2^{16k}$ vertices in $B_6$ sending a blue edge towards a fixed set of vertices of our red $K_{k-\alpha}$ in $R_6$ and at any point we use up at most $$\frac{|R_6|}{k-\alpha}\cdot \alpha\le m_0 \alpha \le m_0k-2^{18k}-2^{16k}-\alpha \le |B_6|-2^{16k}-\alpha,$$ vertices of $B_6$, where in the first inequality we used that $R_6\subseteq R_2$ and $|R_2|=(k-\alpha)m_0$, in the second inequality we used $m_0(k-\alpha) \ge m_0$ which is bigger than any exponential function in $k$ and in the final inequality we used that $|B_6| \ge |B_2|-2^{18k}=m_0k-2^{18k}.$ So we always have new $\alpha$ vertices of $B_6$ which are joined completely in red to our current $K_{k-\alpha}$. So, indeed we can find our collection of $\floor{\frac{|R_6|}{k-\alpha}}$ red $H$'s, completing the proof. 
\end{proof}

Observe that our proof gives a lot of additional information on the relations between sizes of $R,B$ and $E$. For example, we know that 
\begin{equation}\label{eq}
|R|,|B| \ge m_0-2^{O(k)} \ge n/2^{O(k)}-2^{O(k)} \ge k2^{7k} \ge k|E|,
\end{equation} since they contain $R_6$ and $B_6$ resp.\ which we obtain by removing up to $2^{O(k)}$ vertices from $R_2,B_2$ both of which have size at least $m_0$ and using \Cref{claim:prop,claim:E}. We also know\footnote{Under the (w.l.o.g.) assumption the edges in between $R$ and $B$ are red.} that $|B| \ge \frac{k}{2k-\alpha}N-2^{O(k)}\ge kn-2^{O(k)}\ge(k-1)n \ge \alpha n$, since $B$ upon removal of $2^{O(k)}$ vertices covers a proportion of $\frac{k}{2k-\alpha}$ of the graph and using \Cref{prop:lb} to lower bound $N$. It is also not hard to recover these bounds directly from the properties we state. For example, instead of using \Cref{claim:E} we can directly obtain an even stronger bound on $|E|$. Observe that $E$ may not contain any monochromatic member of $\mathcal{D}'(H)$, defined as the collection of graphs we can obtain\footnote{Recall here that $\mathcal{D}(H)$ stands for the family of subgraphs of $H$ obtained by removing \emph{maximal} independent sets so in particular $\mathcal{D}'(H)\subseteq \mathcal{D}(H)$.} by removing an independent set of $H$ of maximum possible size: $\alpha(H)$. To see this note that if $E$ contained a red $H'\in \mathcal{D}'(H)$ so with $k-\alpha$ vertices then taking this $H'$ and $k$ vertices from $B$ would create an $H$-tie using vertices of $E$. Similarly, no such blue $H'$ can exist in $E$. This implies $|E|\le r(\mathcal{D}'(H))\le r(H) \le 4^k.$ 

So if we treat $H$ to be of constant size it is fairly easy to check all possible colourings of sets $E$ consisting of up to $4^k$ vertices and to determine the largest number of monochromatic copies of $H$ we can find, by running an exhaustive search. While technically this strategy allows one to determine $r(nH)$ whenever $n$ is large enough so that \Cref{thm:critical} applies, it might seem that in practice it is not of much use. Be that as it may it turns out that in many interesting cases deducing $r(nH)$ from \Cref{thm:critical} is actually fairly straightforward. For example, this is the case when $H$ is a clique. In addition we have the following estimates which often turn out to be tight, either in the sense that upper and lower bounds match, or that the argument behind it is easily strengthened to show either the upper or the lower bound is tight. In order to state it we will need to define two more families related to $\mathcal{D}(H)$. The first one $\mathcal{D}_c(H)$ is the family of subgraphs of $H$ obtained by removing a maximal independent set of $H$ and taking a connected component of the remainder. Similarly, $\mathcal{D}'_c(H)$ is the family of subgraphs of $H$ obtained by removing a \emph{maximum} independent set of $H$ and taking a connected component of the remainder. For example, if $H=C_6$ we have $\mathcal{D}(H)=\{2K_2,3K_1\}, \mathcal{D}'(H)=\{3K_1\}, \mathcal{D}_c(H)=\{K_2,K_1\}$ and $\mathcal{D}_c'(H)=\{K_1\}.$

\begin{thm}\label{thm:estimate}
Let $H$ be a connected $k$-vertex graph and $n\ge 2^{O(k)}$ then
    $$r(\mathcal{D}_c(H),\mathcal{D}(H))-2 \le r(nH)-(2|H|-\alpha(H))n\le r(\mathcal{D}_c'(H),\mathcal{D}(H))-2.$$
\end{thm}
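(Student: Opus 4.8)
The plan is to prove the two inequalities separately; write $k=|H|$ and $\alpha=\alpha(H)$ throughout. The lower bound I would obtain from an explicit colouring refining the one behind \Cref{prop:lb}. Put $e:=r(\mathcal{D}_c(H),\mathcal{D}(H))-1$ and fix a $2$-colouring $E$ of $K_e$ with no red member of $\mathcal{D}_c(H)$ and no blue member of $\mathcal{D}(H)$. Now $2$-colour $K_{(2k-\alpha)n+e-3}$ by partitioning the vertices into a red clique $R$ of size $(k-\alpha)n-1$, a blue clique $B$ of size $kn-2$, and $E$, and colouring all $R$--$B$ edges red, all $R$--$E$ edges blue, and all $B$--$E$ edges red. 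To see there is no monochromatic $nH$: in a red copy of $H$ the vertices lying in $B$ form an independent set of $H$ (as $B$ is blue-complete), and since $R$--$E$ edges are blue, deleting them disconnects the copy into its part in $R$ and its part in $E$; hence the $E$-part is a union of connected components of $H\setminus I$ for some independent set $I$ of $H$. I would establish the short claim that every such union contains a member of $\mathcal{D}_c(H)$: extending $I$ to a maximal independent set $I'$, a component $C$ of $H\setminus I$ either meets $H\setminus I'$, and then contains a component of $H\setminus I'\in\mathcal{D}_c(H)$, or has $V(C)\subseteq I'$, which forces $C=K_1$ and hence makes its vertex adjacent to $I$, a contradiction. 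So no red copy of $H$ uses a vertex of $E$, every red copy uses $\ge k-\alpha$ vertices of $R$, and there are at most $\floor{|R|/(k-\alpha)}=n-1$ disjoint red copies. Symmetrically $B$ is isolated in the blue graph, so a blue copy of $H$ lies inside $B$ or inside $R\cup E$; in the latter case $E$ must contain a blue copy of the whole of $H\setminus I$ for some independent $I$, hence a blue member of $\mathcal{D}(H)$ --- excluded --- so at most $\floor{|B|/k}=n-1$ disjoint blue copies exist. This gives $r(nH)\ge (2k-\alpha)n+e-2$.

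For the upper bound I would apply \Cref{thm:critical} to obtain a critical colouring of $K_{r(nH)-1}$ with parts $R,B,E$; by symmetry assume the $R$--$B$ edges are red, and recall from \eqref{eq} that $|R|,|B|\ge n/2^{O(k)}$ and $|B|\ge\alpha n$. Three observations: (i) if $|R|\ge(k-\alpha)n$ then placing $n$ disjoint independent $\alpha$-sets of $H$ into $B$ and completing each into a copy of $H$ inside $R$ produces a red $nH$; hence $|R|\le(k-\alpha)n-1$. (ii) Property \ref{itm:3} forbids a red member of $\mathcal{D}_c'(H)$ in $E$: such a $D$ is a component of $H\setminus I_0$ for a \emph{maximum} independent set $I_0$, and $D$ inside $E$, the other components of $H\setminus I_0$ inside $R$, and $I_0$ inside $B$ form a red copy of $H$ meeting $E$, while any blue $K_k$ inside $B$ containing $I_0$ gives a blue copy of $H$; their union has precisely $2k-\alpha$ vertices and meets $E$, contradicting \ref{itm:3}. (iii) Greedily remove from $E$ disjoint blue copies of members of $\mathcal{D}(H)$, say $s$ of them on a vertex set $E_1$, leaving $E_0=E\setminus E_1$ with no blue member of $\mathcal{D}(H)$; each removed copy of $D_i=H\setminus I_0^{(i)}$ extends to a blue copy of $H$ by placing $I_0^{(i)}$ inside $R$ (the $R$--$E$ edges are blue and $R$ is large), so together with $\floor{|B|/k}$ blue cliques inside $B$ we obtain $\floor{|B|/k}+s$ disjoint blue copies, forcing $\floor{|B|/k}+s\le n-1$, i.e.\ $|B|\le kn-ks-1$.

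To finish, note that the induced subgraph $E_0$ has no red member of $\mathcal{D}_c'(H)$ (inherited from (ii)) and no blue member of $\mathcal{D}(H)$, so $|E_0|\le r(\mathcal{D}_c'(H),\mathcal{D}(H))-1$, while $|E_1|=\sum_i|D_i|\le(k-1)s$ since every maximal independent set $I_0^{(i)}$ is non-empty. Hence
\begin{align*}
r(nH)-1 &= |R|+|B|+|E_0|+|E_1| \\
&\le \big((k-\alpha)n-1\big)+\big(kn-ks-1\big)+\big(r(\mathcal{D}_c'(H),\mathcal{D}(H))-1\big)+(k-1)s\\
&\le (2k-\alpha)n+r(\mathcal{D}_c'(H),\mathcal{D}(H))-3,
\end{align*}
which is the desired bound $r(nH)\le(2k-\alpha)n+r(\mathcal{D}_c'(H),\mathcal{D}(H))-2$.

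The packing steps are routine; the main work is the combinatorial dictionary ``a monochromatic copy of $H$ meeting the exceptional set corresponds to a monochromatic member of the appropriate $\mathcal{D}$-family inside it'', and in particular identifying \emph{which} family: $\mathcal{D}_c(H)$ suffices in the construction, whereas in the upper bound $\mathcal{D}_c'(H)$ appears because it is extracted only via an $H$-tie, which by definition has exactly $2k-\alpha$ vertices and therefore must use a maximum independent set; the non-maximal-independent-set degeneracies in the short claim above also need care. The second delicate point is the accounting in the upper bound: the possible surplus $|E_1|$ of the exceptional set has to be absorbed exactly by the deficit $ks$ in $|B|$, which is precisely why $r(\,\cdot\,,\mathcal{D}(H))$, rather than a primed variant, shows up in both bounds.
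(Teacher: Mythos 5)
Your argument follows the paper's proof in both directions: the same three-part colouring $R,B,E$ for the lower bound, and for the upper bound the same reduction via \Cref{thm:critical} to bounding $|R|$, $|B|$ and the exceptional set separately, with only cosmetic differences in the bookkeeping (the paper absorbs the blue copies of $H$ meeting $E$ into $B$ and bounds $|B\cup(E\setminus E')|\le kn-1$ in one step, whereas you trade the surplus $|E_1|\le (k-1)s$ against the deficit $|B|\le kn-ks-1$; both accountings close). Your explicit verification that any non-empty union of components of $H\setminus I$, for a not-necessarily-maximal independent set $I$, contains a member of $\mathcal{D}_c(H)$ is a point the paper passes over silently, and your observation (ii) isolating why only members of $\mathcal{D}_c'(H)$ (coming from \emph{maximum} independent sets) can be excluded from $E$ is exactly the paper's $H$-tie construction.

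There is, however, an off-by-one in your lower bound: you take $|B|=kn-2$, so your host graph has $(2k-\alpha)n+e-3=(2k-\alpha)n+r(\mathcal{D}_c(H),\mathcal{D}(H))-4$ vertices, and the conclusion you draw, $r(nH)\ge (2k-\alpha)n+e-2$, unwinds to $r(nH)-(2k-\alpha)n\ge r(\mathcal{D}_c(H),\mathcal{D}(H))-3$ --- one short of the stated bound. Since $\floor{(kn-1)/k}=n-1$ just as well, you should take $|B|=kn-1$ (as the paper does); nothing else in your argument changes, and the correct bound $r(\mathcal{D}_c(H),\mathcal{D}(H))-2$ then follows.
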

\begin{proof}
Let us begin with the lower bound. We will take a $2$-colouring of a complete graph on $(2k-\alpha)n+r(\mathcal{D}_c(H),\mathcal{D}(H))-3$ vertices whose vertex set we partition into sets $R,B$ and $E$ of size $(k-\alpha)n-1$, $kn-1$ and $r(\mathcal{D}_c(H),\mathcal{D}(H))-1$ resp. We colour all edges inside $R$ red and inside $B$ blue, all edges between $R$ and $B$ red, $E$ and $B$ red and $E$ and $R$ blue, while we colour $E$ in such a way to avoid any red member of $\mathcal{D}_c(H)$ and any blue member of $\mathcal{D}(H)$.
Observe first that there are no blue copies of $H$ which use any vertices from $E \cup R$ since $H$ is assumed to be connected and $E$ contains no member of $\mathcal{D}(H)$. Since there are $n|H|-1$ vertices in $B$ we can not fit $n$ disjoint blue copies there. In terms of red copies, the portion of any red copy of $H$ inside $E$ may be obtained by removing an independent set of $H$ (its part inside $B$) and is disconnected from its part in $R$. This means that its part inside $E$, provided it is non-empty, contains a member of $\mathcal{D}_c(H)$. So any red copy of $H$ must be completely contained in $R \cup B$ and hence needs to use at least $k-\alpha$ vertices of $R$. Since $|R|<n(k-\alpha)$ this means there can be no red copy of $nH$. 

Turning to the upper bound, let us apply \Cref{thm:critical} to obtain a partition of the vertex set of the complete graph on $r(nH)-1$ vertices into sets $R,B$ and $E$ and a colouring of its edges satisfying the properties stated in the theorem. First, as long as we can find a blue copy of $H$ which uses any vertices from $E$ we take it out, when we stop, the remaining set $E'$ contains no blue member of $\mathcal{D}(H)$ as it together with some vertices of $R$ would give another blue copy of $H$ and we always have enough vertices of $R$ remaining since by \eqref{eq} $|R|\ge k|E|$. The removed vertices together with all vertices from $B$ can span at most $n|H|-1$ vertices or they would contain a blue $nH$. Observe also that $|R|\le (k-\alpha)n-1$ or we can find a red copy of $nH.$ Note also that there can be at most $kn-1$ vertices in $B \cup (E \setminus E')$ since it consists of a collection of disjoint blue copies of $H$ which contain $E \setminus E'$ so if we had more than $nk$ vertices in this set we could use the leftover part of $B$ to find a blue $nH$. Since we are decomposing the complete graph on $r(nH)-1$ vertices into $R, B \cup (E \setminus E')$ and $E'$ this implies that unless the desired upper bound on $r(nH)$ hold $E'$ must have size at least $r(\mathcal{D}(H),\mathcal{D}_c'(H)).$  Since we argued above it does not contain a blue member of $\mathcal{D}(H)$ so it must contain a red $H'\in \mathcal{D}_c'(H)$. But then taking $k-\alpha-|H'|$ vertices of $R$ and $k$ of $B$ we find an $H$-tie intersecting $E$, a contradiction. 
\end{proof}
The bounds in the above result may seem fairly similar and it is indeed not at all trivial to find an example for which the bounds do not match. For example, the bounds match for any graph $H$ in which every maximal independent set is of maximum size. As a concrete example when $H=K_k$ we have $\mathcal{D}(H)=\mathcal{D}_c(H)=\mathcal{D}'_c(H)=\{K_{k-1}\}$ which means the above theorem gives \Cref{thm:clique} as a corollary.

Finally, let us formally conclude \Cref{thm:main} from \Cref{thm:critical}.

\begin{proof}[ of \Cref{thm:main}]
Let $n_0$ be the smallest value of $n$ for which \Cref{thm:critical} applies. Our goal is to show that for any $n>n_0$ we have $r(nH)=r((n-1)H)+2k-\alpha,$ where as usual $k=|H|$ and $\alpha=\alpha(H)$.

Let $c_n$ be the $nH$ critical colouring guaranteed by \Cref{thm:critical} with defining sets $R,B$ and $E$. We may assume w.l.o.g.\ that edges between $R$ and $B$ are red. As discussed previously (see \eqref{eq}) we may also assume $|R|,|B| \ge k$. We now claim that if we remove $k-\alpha$ vertices from $R$ and $k$ vertices from $B$ we obtain a colouring without a monochromatic $(n-1)H$. This is indeed immediate since if we could find a monochromatic $(n-1)H$ then we can extend it into a monochromatic copy of $nH$ using the removed vertices, since they spanned a red $(k-\alpha,k)$-join which contains both a red and a blue copy of $H$. This implies $r((n-1)H)\ge r(nH)-(2k-\alpha)$.

Let now $c_{n-1}$ be the $(n-1)H$ critical colouring guaranteed by \Cref{thm:critical} with defining sets $R',B'$ and $E'$. We may assume w.l.o.g.\ that edges between $R'$ and $B'$ are red. As discussed previously (see \eqref{eq}) we may also assume $|E'|\le 2^{O(k)},$ as well as $|R'|,|B'| \ge k |E'|$ and $|B'| \ge \alpha n+2^{O(k)}$. We now claim that if we add $k-\alpha$ vertices to $R'$ and $k$ vertices to $B'$ we obtain a colouring without a monochromatic $nH$. To see this suppose otherwise and assume we can find a monochromatic $nH$. Assume first the colour is blue. Note that at most $|E'|\le 2^{O(k)}<n$ of the copies can use a vertex of $E'$ and any other copy must be completely contained within the blue part. Removing $k$ vertices making such a copy of $H$ implies we had a blue $(n-1)H$ in $c_{n-1}$, which is a contradiction (we are using the fact all the vertices inside the blue part are joined to all other vertices in the same way). If the colour was red, similarly we must be able to find a copy disjoint from $E'$, which hence must use at least $k-\alpha$ vertices from the red part. Again removing the $k$ vertices making this copy of $H$ gives a red copy of $(n-1)H$ in $c_{n-1}$ (we are using here the fact this $(n-1)H$ can use at most $2^{O(k)}+(n-1)\alpha$ vertices from the blue part). This shows $r(nH)\ge r((n-1)H)+2k-\alpha$.
\end{proof}

\section{Concluding remarks}
Answering the question of Burr, Erd\H{o}s and Spencer \cite{bes} from 1975, in this paper we showed that the Ramsey number of the copy graph $nH$ settles in its long term behaviour, already when $n$ is exponential in $|H|$. As we already discussed in the introduction, this is essentially best possible in general. That said a conditional improvement may still be possible, namely it could be interesting to show that the long term behaviour occurs already when $n \ge O(r(H))$, since for graphs $H$ with smaller $r(H)$ this would be a significant improvement over our result. Our arguments definitely seem to offer a lot of potential here. Perhaps a good starting point would be to show something of the sort is true in the somewhat easier asymmetric case of $r(G,nH)$ where there is only one place in our proof of \Cref{thm:asym} which really requires an exponential number of vertices.

While we decided to focus on the original, classical instance of the problem raised already by Burr, Erd\H{o}s and Spencer in order to present our new ideas more clearly our methods are fairly robust. For example, a natural generalisation is to consider $r(G,H)$ where $G$ and $H$ are taken to be disjoint unions of $n$ and $m$ graphs taken from some finite families $\mathcal{G}$ and $\mathcal{H}$, respectively. This was considered by Burr in \cite{burr2}, where he established a result along the lines of our \Cref{thm:critical}, but with a much stronger requirement on how large $\min(n,m)$ should be. Our methods should allow one to obtain similar, essentially optimal improvement in this setting as well. This together with only assuming a lower bound on $\max(n,m)$ has the makings of a nice student project. 

A final further direction considers an analogous question in other settings. A particularly nice setting that comes to mind is that of tournaments. Here, a nice starting point may be the natural analogue of \Cref{thm:clique} which asks for how big a tournament you need to take in order to find $n$ vertex disjoint copies of a transitive tournament on $k$ vertices, denoted $\text{Tr}_k$. Here the answer is that the vertex set of any sufficiently large tournament may be decomposed into vertex disjoint $\text{Tr}_k$. This result is attributed to Erd\H{o}s (see \cite{reid}) and independently to Lonc and Truszcy\'nski \cite{Lonc}. The next question is how big is ``sufficiently large''? Erd\H{o}s' argument needs $4^k$ vertices while a more general approach of Lonc and Truszcy\'nski requires a double exponential number. Erd\H{o}s' argument was matched by Caro \cite{caro} who strengthened the general approach of \cite{Lonc}. On the other hand, the best known result, due to Erd\H{o}s and Moser from 1964 \cite{erdos-moser}, allowing one to find even a single copy of $\text{Tr}_k$ requires at least about $2^{k}$ vertices. After the first version of this paper appeared on Arxiv Zach Hunter observed that one can easily modify \Cref{lem:tiling} by replacing cliques with transitive tournaments in order to improve the bounds of Erd\H{o}s and Caro and show that $k^{O(1)}2^{k}$ vertices already suffice to tile with $\text{Tr}_k$. This might be viewed as a single colour analogue of our Ramsey problem which would be trivial in the undirected case, so perhaps a more fitting analogue would be to ask what happens if the tournament is $2$-coloured and we consider the so called oriented Ramsey numbers (see \cite{oriented-ramsey} for more details and definitions). Another interesting direction is the case of induced Ramsey numbers (see \cite{axen} for more details and definitions).


\section*{Acknowledgments} 
We would like to thank Shoham Letzter, Zach Hunter and Domagoj Brada\v{c}, as well as the anonymous referees, for a careful reading of this paper and a number of useful comments and suggestions.

\bibliographystyle{amsplain}


\begin{aicauthors}
\begin{authorinfo}[mbuc]
  Matija Buci\'c\\
  Institute for Advanced Study, School of Mathematics and\\ Princeton University, Department of Mathematics\\
  Princeton, USA\\ 
  matija.bucic\imageat{}ias\imagedot{}edu \\
  \url{https://www.math.ias.edu/~matija.bucic/}
\end{authorinfo}
\begin{authorinfo}[johan]
  Benny Sudakov\\
  ETH Z\"urich, Department of Mathematics\\
  Z\"urich, Switzerland\\
  benjamin.sudakov\imageat{}math\imagedot{}ethz\imagedot{}ch \\
  \url{https://people.math.ethz.ch/~sudakovb/}
\end{authorinfo}
\end{aicauthors}

\end{document}